\newtheorem{Theorem}{Theorem}
\newtheorem{Lemma}[Theorem]{Lemma}
\newtheorem{Corollary}[Theorem]{Corollary}
\newtheorem{Proposition}[Theorem]{Proposition}
\newtheorem{Remark}[Theorem]{Remark}
\newtheorem{Assertion}[Theorem]{Assertion}
\newcommand{\eps}{\varepsilon}
\newcommand\la{\lambda}
\newcommand\vphi{\varphi}
\newcommand\al{\alpha}
\newcommand\La{\Lambda}
\newcommand\si{\sigma}
\newcommand\be{\beta}
\newcommand\Si{\Sigma}
\newcommand\Ga{\Gamma}
\newcommand\de{\delta}
\newcommand\De{\Delta}
\newcommand\nl{\newline}
\newcommand\Ker{\rm{Ker}\/}
\newcommand\id{\rm{id}}
\newcommand\modulo{\rm{modulo}\/}
\newcommand\Cone{\rm{Cone}\/}
\newcommand\Vol{\rm{Vol}\/}
\newcommand\lcm{\rm{lcm}\/}
\newcommand\inv{^{-1}}
\def\mapright#1{\smash{\mathop{\longrightarrow}\limits^{{#1}}}}
\def\inv{^{-1}}
\begin{document}

\title[On Zariski-pairs of links]
{On $\mu$-Zariski pairs of links}
\author
[M. Oka ]
{Mutsuo Oka} 
\address{
Nakaochiai 3-19-8,
Tokyo 161-0032}
 \email{okamutsuo@gmail.com}

\keywords { Almost  Newton non-degenerate, zeta multiplicity, Zariski pair of links}
\subjclass[2010]{32S55,14J17}

\begin{abstract}
The notion of Zariski pairs  for projective curves in $\mathbb P^2$ 
is known since the pioneer paper of Zariski \cite{Zariski} and  for further development, we refer the reference in  \cite{Bartolo}.
In this paper, we introduce a notion of Zariski pair of links in the class of isolated hypersurface singularities.  Such a pair is canonically produced from a Zariski  (or a weak Zariski ) pair of curves $C=\{f(x,y,z)=0\}$ and $C'=\{g(x,y,z)=0\}$ of degree $d$ by simply adding a monomial $z^{d+m}$
to $f$ and $g$
so that the corresponding affine hypersurfaces have isolated singularities at the origin.
They have a same zeta function and a same Milnor number (\cite{Almost}). We give new examples of Zariski pairs which have the same $\mu^*$ sequence and a same zeta function but two functions belong to  different connected components of $\mu$-constant strata (Theorem \ref{mu-zariski}). Two link 3-folds are not diffeomorphic and they are distinguished by the first homology which implies the Jordan form of their monodromies are different (Theorem \ref{main2}).
We start from  weak Zariski pairs of projective curves to construct   new Zariski pairs of  surfaces which have non-diffeomorphic link 3-folds. We also prove that hypersurface pair constructed from a Zariski pair give a diffeomorphic links (Theorem \ref{main3}).
\end{abstract}
\maketitle

\maketitle
\tableofcontents
 
\section{Introduction}
Consider a germ of an  analytic functions $f(\mathbf z)$ defined in a neighborhood $\mathcal U_0$ of the origin  in $\mathbb C^n$  with an  isolated singularity at  $\mathbf z=0$. 
The {\em $\mu^*$-invariant} of $f$ is defined as  $n$-tuple of integers $\mu^*=(\mu^{(n)},\dots, \mu^{(1)})$ where $\mu^{(i)}$ is the Milnor number of $f|{L_i}$ with $L_i$ being a generic $i$-dimensional linear subspace through the origin (\cite{Tessier}). 

Consider a  Zariski  pair  (or  a weak Zariski pair)
 of projective curves  $C, C'$ of degree $d$ defined by homogeneous polynomials $f_d(x,y,z)$ and $g_d(x,y,z)$. We assume that the singular points of $C$ and $C'$ are Newton non-degenerate. Consider the affine hypersurfaces  defined by 
$f(x,y,z)=f_d(x,y,z)+z^{d+m}$ and $g(x,y,z)=g_d(x,y,z)+z^{d+m}$. Then $f$ and $g$ are almost non-degenerate function in the sense of \cite{Almost} and two hypersurfaces have a same zeta function. In \cite{Almost}, we called such a pair {\em a Zariski pair of links}. It is also easy to observe that $f$ and $g$ have a same $\mu^*$-sequence.
Consider the local links at the origin,  $K_f=V(f)\cap S_{\eps}^{2n-1}$ and $K_g=V(g)\cap S_\eps^{2n-1}$, with a sufficiently small 
$\eps\ll 1$. We say $\{K_f,K_g\}$ (or $\{f, g\}$) is a {\em $\mu$-Zariski pair of links} (or {\em of surfaces }), 
if $f$ and $g$  belong to   different connected components of  $\mu$-constant strata. For the definition of $\mu$-constant strata, see \cite{EO}.
We say    $\{K_f, K_g\}$ are {\em $\mu^*$-Zariski pair} if  
$f$ and $g$ belong
to different connected components of $\mu^*$-constant strata. 
In \cite{EO}, we gave an example of $\mu^*$-Zariski pair of links.
Here $\mu^*$-invariant  is introduced by Teissier. 

In \cite{Tessier}, Teissier proved that if $f$ and $g$ 
 are connected by a complex  piecewise analytic $\mu^*$-constant  family
 \footnote{this means, there exists a finite numbers $t_0=0<t_1<\dots<t_k=1$ and open neighborhood $U_i$ of $[t_i,t_{i+1}]$ in $\mathbb C$ and an analytic family of functions $ f_{it}(\mathbf z), t\in U_i$ extending $f_t|_{[t_i,t_{i+1}]}$
 
 }
  $f_t, 0\le t\le 1$ with $f_0=0$ and $f_1=g$, then the local link pairs
$(S_\eps^{2n-1},K_f)$ and $(S_\eps^{2n-1},K_g)$ are diffeomorphic. The same assertion is true for $\mu$-constant $C^{\infty}$-real 
family $f_t$ for $n\ne 3$ by L\^e and Ramanujam \cite{Le-Ramanujam}.  

The purpose of this paper is to present some examples of $\mu$-Zariski pairs of surfaces in \S 3 (Theorem \ref{mu-zariski}). We will show that those pairs of surface links constructed from weak Zariski pairs in \S 3   are not diffeomorphic. 
On the other hand, the link pairs constructed from  Zariski pairs are always diffeomorphic as 3-manifolds (Theorem \ref{main3}) but we do not know if this diffeomorphism can be extended to a diffeomorphism of $S^5$. 

\section{Preliminary}
\subsection{Divisor of rational functions}
Consider a rational function $\vphi(t)=p(t)/q(t)$ where $p(t),q(t)\in \mathbb C[t]$ and  $p(0),q(0)\ne 0$ and consider the factorization  
$p(t)=c\prod_{i=1}^\ell(t-\al_i)^{\nu_i}, c\ne 0$ and $q(t)=c'\prod_{j=1}^m (t-\be_j)^{\mu_j},\,c'\ne 0$. The divisor of $\vphi$ is defined  by 
\[
\rm{div} ( \vphi)=\sum_{i=1}^\ell \nu_i<\al_i>-\sum_{j=1}^m
\mu_j<\be_j>\,\in \mathbb Z\mathbb C^*
\]
where $\mathbb Z\mathbb C^*$  is 
the group ring of $\mathbb C^*$. We denote the divisor of $(t^d-1)$ by $\La_d$.
The degree of $\vphi$ is defined by $\deg\,\vphi=\deg\,p-\deg\, q=\ell-m$.
The following formula (\cite{Orlik-Milnor}) is  useful in the later discussions:
\begin{eqnarray}\label{product-formula}
\La_d\cdot\La_{d'}=\gcd(d,d')\La_{\lcm(d,d')}.
\end{eqnarray}
\subsection{Zeta function of the Milnor fibration}\label{zeta function}
For an analytic function $f(\mathbf z)$ defined in a neighborhood of the origin, we consider the tubular Milnor fibration  
$f: E(\eps,\de)^*\to D_\de^*$
where 
\[\begin{split}
E(\eps,\de)^*&=\{\mathbf z\in\mathbb C^n\,|\, \|\mathbf z\|\le \eps,\,0\ne |f(\mathbf z)|\le \de\},\\
D_\de^*&=\{\eta\in \mathbf C\,|\, 0<|\eta|\le \de,\,\,\de\ll \eps\ll 1.
\end{split}
\]
Let $F$ be the fiber and let $h: F\to F$ be the monodromy map. Consider the characteristic polynomial $P_j(t)=\det({\id}-t\, h_{*j})$  where  $h_{*j}:H_j(F;\mathbb Q)\to H_j(F;\mathbb Q)$.
 The zeta function of the Milnor fibration, denoted    as $\zeta_f(t)$ is defined by the alternative product of the characteristic polynomials
 $\zeta_f(t)=P_0(t)^{-1}P_1(t)\cdots P_{n-1}(t)^{(-1)^n}$. If $f$ has an isolated singularity at the origin, $F$ is (n-2)-connected
 and 
 \[
 \zeta_f(t)=P_{n-1}(t)^{(-1)^n}(1-t)^{-1},\quad \deg\,\zeta_f=-1+(-1)^n \mu
 \]
  where $\mu$ is the (n-1)-th
 Betti number of $F$ and  $\mu$ is usually  called the Milnor number of $f$ at $0$ (\cite{Milnor}).
\subsection{A'Campo  formula}
Consider an analytic function $f(\mathbf z)=\sum_{\nu}a_\nu\mathbf z^{\nu}$ of $n$ variables defined in a neighborhood of the origin of $\mathbb C^n$.  
Assume that we are given a good resolution $ \hat\pi:X\to  \mathcal U_0$ of the function $f$ and let $E_1,\dots, E_s$ be the exceptional divisors of $ \hat\pi$,
that is    $\hat\pi\inv(V)= \widetilde V\cup_{i=1}^s E_j$ where $\tilde V$ is the strict transform of the hypersurface $V:=f\inv(0)$ and  $\mathcal U_0$ is a small neighborhood of the origin.
The irreducible components of  $\tilde V$ and $E_j,\,j=1,\dots,s$ are non-singular and
$ \hat\pi\inv(V)$ has only ordinary normal crossing singularities.
Consider the open  subset $E_j'=E_j\setminus( \widetilde V\cup_{i\ne j}E_i)$ and $E_j''=E_j'\cap\hat\pi\inv(0)$.  In particular, if $E_j$ is a compact divisor, 
$E_j''=E_j'$.
Let $m_j$ be the multiplicity of $ \hat\pi^*f$ along $E_j$.
\begin{Lemma} [ A'Campo \cite{ACampo}]
The zeta function of the Milnor monodromy at the origin is given by the formula:
\begin{eqnarray}\label{AC}
\zeta(t)=\prod_{j=1}^s(1-t^{m_j})^{-\chi(E_j'')}.\hspace{2cm}\notag
\end{eqnarray}
\end{Lemma}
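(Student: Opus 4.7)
The plan is to pull the Milnor fibration back along the resolution $\hat\pi$ and compute the zeta function by decomposing the pulled-back Milnor fiber according to the stratification of the exceptional divisors. First I would fix a small Milnor fiber $F = f^{-1}(\eta)\cap \mathcal U_0$ with $0<|\eta|\ll \eps\ll 1$. Since $F$ avoids the origin and $\hat\pi$ is an isomorphism off $\hat\pi^{-1}(0)$, the restriction $\hat\pi\colon \tilde F \to F$ with $\tilde F:=\hat\pi^{-1}(F)$ is a diffeomorphism that intertwines the geometric monodromy with the one on $\tilde F$. Hence $\zeta_f(t)$ may be computed on the resolution side.

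Next I would stratify a tubular neighborhood of $\hat\pi^{-1}(0)$ by the locally closed pieces $E_j''$ together with the (finitely many) intersection points of different exceptional divisors and intersections with the strict transform $\tilde V$. Over a point $p\in E_j''$, the map $\hat\pi^* f$ has the local normal form $u\cdot v^{m_j}$ with $v$ a defining equation of $E_j$ and $u$ a nowhere-vanishing holomorphic function. Consequently the level set $\{uv^{m_j}=\eta\}$ meets a small polydisk transverse to $E_j''$ in a disjoint union of $m_j$ punctured disks, cyclically permuted by the geometric monodromy $\eta\mapsto \eta e^{2\pi i}$. This yields a locally trivial fibration $\tilde F_j:=\tilde F\cap T_j \to E_j''$ whose fiber is an $m_j$-fold cyclic cover.

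I would then assemble the zeta function by additivity. Using the multiplicativity of Euler characteristics in fibrations and the well-known formula for the zeta function of a cyclic $m_j$-cover monodromy, the contribution of $\tilde F_j$ to the monodromy zeta function is $(1-t^{m_j})^{-\chi(E_j'')}$, which in divisor notation corresponds to $-\chi(E_j'')\,\La_{m_j}$. Summing across all $j$ and invoking the fact that the open strata $E_j''$ (together with the excised crossing locus) form a partition of $\hat\pi^{-1}(0)$ modulo lower-dimensional pieces, one obtains
\begin{equation*}
\zeta(t) = \prod_{j=1}^{s}(1-t^{m_j})^{-\chi(E_j'')}.
\end{equation*}

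The main obstacle is showing that the excised pieces — the normal-crossing points $E_j\cap E_k$ and the intersections $E_j\cap \tilde V$ — contribute trivially. Near such a point the resolved function has the form $u\cdot v^{a}w^{b}$ with $a,b\ge 1$, so the local Milnor fiber is homotopy equivalent to a torus fibered by two commuting cyclic monodromies. A direct calculation via the Wang sequence (or a Mayer--Vietoris argument) shows the corresponding zeta factor is identically $1$. This justifies why only the open parts $E_j''$ appear in the formula and is the one step where the combinatorics of the divisor crossings must be tracked carefully.
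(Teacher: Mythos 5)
The paper does not supply a proof here: the Lemma is stated as A'Campo's theorem and merely cited to \cite{ACampo}, so there is no internal argument to compare your proposal against. What you have written is a reasonable sketch of the now-standard ``stratification'' proof of A'Campo's formula, which is a legitimate alternative to A'Campo's original approach. A'Campo himself constructs a geometric monodromy representative adapted to the normal-crossing resolution, counts fixed points of its iterates, and recovers the zeta function via the Lefschetz fixed-point formula; your route instead decomposes the pulled-back Milnor fiber over the open strata $E_j''$ and multiplies local zeta contributions. Both work; the Lefschetz approach avoids having to justify multiplicativity of the zeta function over a non-proper stratification, while your approach generalizes more readily to the relative and motivic settings.

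A few precision issues worth flagging. Near a point of $E_j''$, a transverse one-complex-dimensional slice meets $\{\hat\pi^*f = \eta\}$ in $m_j$ \emph{points}, not $m_j$ punctured disks; the ``punctured'' is a slip, and what you actually need is that the Milnor fiber near $E_j''$ is a fiber bundle over $E_j''$ with fiber a set of $m_j$ points that the geometric monodromy permutes cyclically. Second, the phrase ``multiplicativity of Euler characteristics in fibrations'' undersells what is required: the statement one must prove is that for a fiber bundle $F\hookrightarrow E\to B$ with a fiberwise automorphism $g$ covering $\mathrm{id}_B$, one has $\zeta_E(g;t)=\zeta_F(g|_F;t)^{\chi(B)}$; this is a genuine lemma (sometimes credited to Norton) and is the real heart of the step. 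Third, the additivity of the zeta function over the locally closed stratification of $\hat\pi^{-1}(0)$ is not automatic; it needs either compactly supported cohomology or a careful excision/Mayer--Vietoris argument, and you only gesture at it. Finally, your treatment of the crossing strata is essentially right but should be phrased in terms of the transverse local model $\{v^a w^b=\eta\}$ being $\gcd(a,b)$ copies of $\mathbb C^*$, each with vanishing Euler characteristic, so the zeta contribution of a bundle over any base with such fibers is trivially $1$. None of these is a fatal gap, but each requires a full argument before the sketch becomes a proof.
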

In this formula, the singularities at the origin can be non-isolated. As a simple corollary of the A'Campo formula , we have
\begin{Corollary}\label{uniqueness}
The divisor of the  zeta function of Milnor monodromy is uniquely expressed as 
${\rm{div}}\,\zeta(t)=\sum_{i=1}^s \nu_i\La_{d_i}$ with $d_1<\dots<d_s$ and $\nu_i\ne 0$ for $i=1,\dots,s$.
\end{Corollary}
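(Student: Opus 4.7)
The plan is to separate existence and uniqueness. Existence is immediate from A'Campo's formula recalled just above: applying ${\rm{div}}$ to $\zeta(t) = \prod_{j=1}^s (1-t^{m_j})^{-\chi(E_j'')}$ gives
\[
{\rm{div}}\,\zeta(t) = -\sum_{j=1}^s \chi(E_j'')\,\La_{m_j},
\]
and after grouping the terms that share an $m_j$ and discarding any whose aggregated coefficient vanishes, one obtains an expression $\sum_i \nu_i \La_{d_i}$ of the stated form with strictly increasing $d_i$ and nonzero $\nu_i$.

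The substance of the corollary is therefore uniqueness, which amounts to the $\mathbb Z$-linear independence of the family $\{\La_d\}_{d\ge 1}$ inside the group ring $\mathbb Z\mathbb C^*$. To prove this I would suppose a relation $\sum_{i=1}^s \nu_i \La_{d_i} = 0$ with $d_1<\cdots<d_s$ and argue by descending induction on $s$. Pick a primitive $d_s$-th root of unity $\eta$. By definition $\La_d = \sum_{\omega^d=1}<\omega>$, so the coefficient of $<\eta>$ in $\La_{d_i}$ equals $1$ if $d_s\mid d_i$ and $0$ otherwise. Among $d_1<\cdots<d_s$ only $d_i=d_s$ satisfies this, so the coefficient of $<\eta>$ in the whole sum is $\nu_s$, which must vanish. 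Iterating the same argument with $d_{s-1}$ in place of $d_s$ forces each $\nu_i=0$, establishing linear independence.

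Combining the two steps: given two representations $\sum \nu_i \La_{d_i}$ and $\sum \mu_j \La_{e_j}$ of ${\rm{div}}\,\zeta(t)$ satisfying the stated constraints, their difference is a linear combination of distinct $\La_d$'s equal to zero; by independence each coefficient vanishes, forcing $\{d_i\}=\{e_j\}$ and $\nu_i=\mu_i$. I do not anticipate any genuine obstacle — the entire content is the elementary observation that primitive $d$-th roots of unity separate the divisors $\La_d$, and the A'Campo formula itself is invoked only to give a concrete realization of the decomposition.
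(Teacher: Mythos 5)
Your proposal is correct and matches the paper's intent: the paper states this as an immediate consequence of A'Campo's formula and gives no separate proof, existence being exactly your first step and uniqueness being the $\mathbb Z$-linear independence of the $\La_d$ in $\mathbb Z\mathbb C^*$, which your primitive-root-of-unity coefficient argument establishes cleanly. The same descending coefficient comparison is in fact what the paper implicitly uses later (in the proof of Lemma \ref{zeta-const}, where it deduces $d_1=e_1$, $\nu_1=\mu_1$ and proceeds inductively), so you have simply supplied the elementary detail the paper leaves to the reader.
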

  \subsection{ Newton boundary and  dual Newton diagram}
  \subsubsection{Newton boundary}
  Let $f(\mathbf z)=\sum_\nu a_\nu \mathbf z^\nu$ be a given  holomorphic function defined by a convergent power series. 
  Let $M$ be the space of monomials of the fixed coordinate variables $z_1,\dots, z_n$ of $\mathbb C^n$ and let $N$ be the space of weights  of the  variables $z_1, \dots, z_n$. 
  We identify   the monomial $\mathbf z^{\nu}=z_1^{\nu_{1}}\dots z_n^{\nu_n}$ and the integral point $\nu=(\nu_{1},\dots, \nu_n)\in \mathbb R^n$. A weight $P$  is also identified with
  the column vector ${}^t(p_1,\dots, p_n)\in \mathbb R^n$ where  $p_i=\deg_P (z_i)$ and we call $P$ a weight vector.
The  {\em Newton polygon} $\Ga^+(f)$ with respect to the given coordinates $\mathbf z=(z_1,\dots, z_n)$ is the convex hull of the union $\cup_{\nu, a_\nu\ne 0} \{\nu+\mathbb R_{\ge 0}^n\}$ and {\em the Newton boundary} $\Ga(f)$  is defined  by the union of compact faces of $\Ga^+(f)$.
  For a non-negative   weight vector $P={}^t(p_1,\dots, p_n)$, we consider the canonical linear function $\ell_P$ on $\Ga^+(f)$
  which is defined by $\ell_P(\nu)=\sum_{i=1}^n \nu_ip_i$.
  This is nothing but  the degree mapping $\deg_P \mathbf z^\nu=\ell_P(\nu)$.
   The minimal value of $\ell_P$ is denoted by
  $d(P; f)$.  Put $\De(P; f):=\{\nu\in \Ga^+(f)\,|\, \ell_P(\nu)=d(P)\}$. We will  use the simplified notations $d(P)$ and $\De(P)$ if any ambiguity seems unlikely.  In general,
  $\De(P)$ is a face of $\Ga^+(f)$ and $\De(P)\subset \Ga(f)$ if $P$ is {\it   positive} (i.e., $p_i>0,\forall i$).
For a maximal dimensional face $\De$, i.e.   $\De\subset \Ga(f)$ with $\dim\, \De=n-1$, there is a unique   positive primitive integer vector $P$ such that $\De(P)=\De$.
  The  partial sum $\sum_{\nu\in \De}a_\nu \mathbf z^\nu$ is called {\it the face function} of the face  $\De$ and we denote it  as 
  $f_{\De}$.  For a weight $P$, $f_P$ is defined by $f_{\De(P)}$. Note that $f_P$  is a polynomial if $P$ is   positive.
  \begin{Remark}{\rm 
  In this paper,  we used the terminologies for a weight vector {\em  positive } and %
{\em  non-negetive} instead of {\em  strictly  positive } and   {\em positive}  weight vectors respectively, terminologies used  in \cite{Okabook,Almost}.
We changed the terminologies so that it is consistent with  our paper \cite{EO}.}
  \end{Remark}
  \subsubsection{Dual Newton diagram}
  Two weight vectors $P,Q$ are equivalent if and only if $\De(P)=\De(Q)$ and this equivalent relation gives a conical subdivision of  the space of the non-negative weight vectors $N^+_{\mathbb R}$, i.e.  of $\mathbb R_{\ge 0}^n$
  (under the above identification) and we denote it as $\Ga^*(f)$ and call it {\em the dual Newton diagram of $f$}. 
  We say, $f$ is {\it Newton non-degenerate
  on  $\De\subset \Ga(f)$} if $f_\De:\mathbb C^{*n}\to \mathbb C$ has no critical points. We say $f$ is {\it Newton non-degenerate} if it is non-degenerate on every face $\De\subset \Ga(f)$ of any dimension.  The dimension of a face can be any non-negative integer less than $n$.
  The closure of an  equivalent class can be expressed  as 
  \[
  {\Cone}(P_1,\dots, P_k):=\left\{\sum_{i=1}^k \la_i P_i\,|\, \la_i\ge 0\right \}\]
  where  $P_1,\dots, P_k$ are chosen to be primitive integer vectors. This expression is unique if $k$ is minimal among any possible such expressions.
  A cone $\si={\Cone}\,(P_1,\dots, P_k)$ is {\it simplicial} if  $\dim\,\si=k$ and $\si$ is {\it regular} if $P_1,\dots, P_k$ are primitive integer vectors which can be  extended to a basis of the lattice $\mathbb Z^n\subset \mathbb R^n$.
Recall that $f$ is {\em convenient} if $\Ga(f)$ touches with every coordinate axis. We say   $f$ is {\em pseudo-convenient} if $f$ is written as $f(\mathbf z)=\mathbf z^{\nu_0} f'(\mathbf z)$
where $f'$ is a convenient analytic function and $\nu_0$ is a non-negative integer vector. In this case,
$\Ga^*(f)=\Ga^*(f')$.
 \subsection{Toric modification } 
  A regular simplicial cone subdivision $\Si^*$ of the space of non-negative weight vectors $N^+_{\mathbb R}=\mathbb R_+^n$ is  {\it admissible with the dual Newton diagram $\Ga^*(f)$} if $\Si^*$ is a subdivision of $\Ga^*(f)$. For such a regular simplicial cone subdivision, we associate a modification 
  $\hat\pi:X\to  \mathbb C^n$ as follows:
  let $\mathcal S_n$ be the set of $n$-dimensional cones in $\Si^*$.
  For each $\si={\Cone} (P_1,\dots, P_n)\in \mathcal S_n$, we identify $\si$ with the unimodular matrix:
  \[
  \si=\left(\begin{matrix}
  p_{11},&\dots&p_{1n}\\
  \vdots&\vdots&\vdots\\
  p_{n1}&\dots&p_{nn}\end{matrix}
  \right)
  \]
  with $P_j={}^t(p_{1j},\dots, p_{nj})$. 
  For a  unimodular matrix $A=(a_{ij})_{1\le i,j\le n}$, we define  an isomorphism of the torus  $\psi_A:\mathbb C^{*n}\to \mathbb C^{*n}$ by $\mathbf w=\psi_A(\mathbf z), w_i=z_1^{a_{i1}}\dots z_n^{a_{in}},\,i=1,\dots,n$.
  To each $\si\in \mathcal S_n$, we associate an affine coordinate chart  $(\mathbb C_\si^n,\mathbf u_\si)$ with coordinates $\mathbf u_\si=(u_{\si 1},\dots, u_{\si n})$. The modification $\hat \pi$  is defined as follows. For each $\si\in \mathcal S_n$,
  we associate a birational mapping $ \hat\pi_\si=\psi_\si:\mathbb C_\si^n\to \mathbb C^n$ by $z_i=u_{\si 1}^{p_{i1}}\dots u_{\si n}^{p_{in}}$
  for $i=1,\dots,n$ 
  under the identification of $\si$ and the above  unimodular matrix. Then  a complex manifold $X$ is constructed  by gluing $\mathbb C_\si^n$ and $\mathbb C_\tau^n$ by $ \hat\pi_{\tau}\inv\circ\hat\pi_\si:\mathbb C_\si^n\to \mathbb C_\tau^n$ where it is well-defined.  This defines a proper modification $ \hat\pi:X\to  \mathbb C^n$ ( for further detail, see  Theorem (1.4), Chapter II,\cite{Okabook}).
 
   \subsubsection{Exceptional divisors corresponding to vertices of $\Si^*$}
 Suppose $\si={\Cone} (P_1,\dots, P_n)$ and $\tau={\Cone}(Q_1,\dots, Q_n)$ have a same vertex  $Q_i=P_1$ for some $i$. Taking a permutation of the vertices, we may always assume that $i=1$. 
The hyperplane $\{u_{\si 1}=0\}\subset \mathbb  C_\si^n$ glues  canonically  with the hyperplane $\{u_{\tau 1}=0\}\subset \mathbb C_\tau^n$. Thus 
 any vertex\footnote{
 A vertex  is a   primitive integral vector which generate a one-dimensional cone of $\Si^*$. } $P$ of $\Si^*$, gluing the hyperplanes on  every such toric coordinates with $P_1=P$,   
 defines a rational divisor in $X$,  and we denote this divisor by $\hat E(P)$. We say  vertices $Q_i, i=1,\dots,r$  
 are {\em adjacent} if $\Cone\,(Q_1,\dots, Q_r)$ is a cone in $\Si^*$. By the  assumption that $\Si^*$ is admissible, $\cap_{i=1}^r\hat E(Q_i)$ is non-empty if and only if $Q_i,\dots, Q_r$ are adjacent (Proposition (1.3.2), Chapter II, \cite{Okabook}).
If $P$ is   positive, $\hat E(P)$ is a compact divisor and $\hat\pi(\hat E(P))=\{O\}$. 
Let $\mathcal V^+$ be the set of  non-negative vertices $P$ of $ \Si^*$ with $d(P)>0$ and $P\ne e_1,\dots, e_n$.  A germ of a function $f(\mathbf z)\in \mathcal O_0$ is called monomial-factor free if the factorization of $f$ does not have any monomial factor.
 For a monomial-factor free $f$, $\Si^*$ is called {\em small} if $\mathbb C^I$ is a  not vanishing coordinate subspace of $f$, then $e_{I^c}:=\Cone\{e_j\,|\, j\in I^c\}$ is a cone in $\Si^*$ where $I^c=\{1\le j\le n\, |j\notin I\}$. In the  case $f$ being not monomial-factor free, 
write  $f=Mf'$ where $M$ is a monomial and $f'$ is monomial-factor free, and 
$\Si^*$  is small for $f$ if it is small for $f'$. Here we note that $\Ga^*(f)=\Ga^*(f')$.
Note that if $f$ is pseudo-convenient, in a small regular simplicial cone subdivision $\Si^*$, every
vertices are  positive except for the canonical generators  $\{e_1,\dots, e_n\}$ of the lattice $\mathbb Z^n$ where 
$e_i={}^t(0,\dots,\overset{\overset i{\smile}}1,\dots,0)$.
If $\Si^*$  is {\em small},  the associated modification $\hat\pi:X\to   \mathbb C^n$ is called a {\em small } toric modification hereafter. If $\hat\pi$ is small and $f$ is pseudo-convenient,   $\hat E(e_i)^*$ is surjectively mapped onto $\{z_i=0,z_j\ne 0,j\ne i\}$.
See \S \ref{toric-stratification} for the definition of $\hat E(e_i)^*$.
In this paper, we consider  functions which are either convenient or at most the pseudo convenient
and we assume that  $\Si^*$ is small. Thus 
any vertex $P\in \mathcal V^+$ is, in fact,   positive and 
 $\hat\pi\inv(O)=\cup_{P\in \mathcal V^+}\hat E(P)$.
\subsubsection{Pull-back of $f$}
We consider  hypersurface $V:=f\inv(0)$ and take a sufficiently small neighborhood $\mathcal U_0$ where $f$ is defined. In the following, we restrict $\hat \pi$ over $\hat\pi\inv(\mathcal U_0)$, whenever we consider the strict transform of $V$ in the upper space $X$.
The pull-back   ${\hat\pi}^*f$ of $f$ is expressed in the toric chart $\mathbb C_\si^n$ with $\si={\Cone}(P_1,\dots, P_n)$ as  follows:
\[
 {\hat\pi}^*f(\mathbf u_\si)=\left(\prod_{i=1}^n u_{\si,i}^{d(P_i)}\right)\widetilde f(\mathbf u_\si)
\]
and $\widetilde f(\mathbf u_\si)$ is the defining function of the strict transform $\widetilde V$ of $V$.  The intersection $E(P):=\widetilde V\cap\hat E(P)$ is 
a divisor in $\hat E(P)$ and it is defined  by $g(u_{\si 2},\dots,u_{\si n}):=\tilde f(0,u_{\si 2},\dots, u_{\si n})=0$. (Recall we have assumed $P=P_1$.) More explicitly, we have
\[
g(u_{\si 2},\dots,u_{\si n})=f_{P}(\hat\pi_\si(\mathbf u_\si))/\left(\prod_{i=1}^n u_{\si,i}^{d(P_i)}\right)
\]
where $f_{P}$ is the face function of $f$ with respect to $P=P_1$.
$E(P)\subset \tilde V$ is an exceptional divisor of the restriction $\pi:=\hat\pi|_{\widetilde V}:\widetilde V\to V$
and $E(P)$ is non-empty if and only if $\dim\, \De(P)\ge 1$.
\subsubsection{Toric stratification}\label{toric-stratification}
Let $P$ be a vertex of $\Si^*$ and let $\mathcal C(P)$ be the set of cones $\tau=\Cone\,(P_1,\dots, P_k)\in \Si^*$ with
$P_1=P$.  Choose a maximal cone $\si=\Cone\,(P_1,\dots, P_n)$ which  has  $\tau$  as a boundary face.   
Put 
\[\begin{split}
\hat E(\tau)^*&:=\cap_{i=1}^k\hat E(P_i)\setminus\cup_{j=k+1}^n\hat E(P_j)\\
&=\{\mathbf u_\si\in \mathbb C_\si^n\,|\, u_{\si 1}=\cdots=u_{\si k}=0,\,u_{\si j}\ne 0,\,\, j>k+1
\}\\
&\cong \mathbb C^{*(n-k)}.
\end{split}
\]
$\hat E(\tau)^*$ does not depend on the choice of $\si$ and 
$\hat E(\tau)^*$ is isomorphic to the torus $\mathbb C^{*(n-k)}$.
Now we see that  $\hat E(P)$ has a disjoint partition
$\amalg_{\tau\in \mathcal C(P)}\hat E(\tau)^*$ which we call {\em the toric stratification} of $\hat E(P)$.
In particular, we put $\hat E(P)^*=\hat E(P)\setminus \cup_{Q\in \mathcal V^+,Q\ne P}\hat E(Q)$. This is the maximal dimensional torus in $\hat E(P)$.
Let $I\subset \{1,\dots, n\}$ and put $e_{I}$ be the cone generated by $\{e_i\,|\, i\in I\}$.
Let $\hat E(P)_I^{*}:=\hat E(P)\cap \hat E(e_{I^c})^*$, which is  empty if
 vertices $\{P, e_j\,|\,\,j\in I^c\}$ are not adjacent and put  $E(P)_I^{*}=E(P)\cap  \hat E(P)_I^{*}$.
Here $I^c=\{1,\dots,n\}\setminus I$.
Then we use  the toric decomposition of 
\nl
$\hat E'(P):=\hat E(P)\setminus \tilde V\cup_{Q\ne P} \hat E(Q)$ as 
\[
\hat E(P)'=\amalg_I\hat E(P)_I^{'},\quad \hat E(P)_I'=\hat E(P)_I^*\setminus E(P)_I^*. 
\]
  Take a vertex $P\in \mathcal V^+$  and we consider  the exceptional divisor $\hat E(P)$.
 We compute the Euler characteristic $\chi(\hat E(P)')$ in the A'Campo formula using the toric stratification.
 Let $\hat E(P)_I'=\hat E(P)_I^*\setminus E(P)_I^*$.
 Note that  $\hat E(P)_I'$ is non-empty only if $\{P,e_i|\, i\notin I\}$ spans a simplicial cone in $\Si^*$. For $I=\{1,\dots, n\}$, we omit
the suffix $I$. By the additivity of Euler characteristics, we have
 \begin{eqnarray}\label{Acampo-degenerate}
 \hat E(P)'&=&\amalg_{I}(\hat E(P)_I^*\setminus E(P)_I^*)=\amalg_I \hat E(P)_I',
 \notag\\
 \chi( \hat E(P)')&=&-\sum_I \chi( E(P)_I^*). 
 \end{eqnarray}
In the following argument, we use  the additivity of Euler characteristics and the decomposition (\ref{Acampo-degenerate}) which is also valid for a function which has some Newton  degenerate faces.
 So consider function $f$ which has some degenerate faces like 
 almost Newton non-degenerate functions which we recall in \S \ref{almost-section}. First we take an admissible toric modification $\hat\pi:X\to   \mathbb C^n$. The strict transform $\tilde V$ or $\hat\pi^*f\inv(0)$ has still some singularities.
Take further modification $\omega:Y\to X$ so that $\Pi:=\hat\pi\circ\omega:Y\to  \mathbb C^n$ 
is a good resolution of $f$ when it is restricted over $\mathcal U_0$. Let $D_1,\dots, D_s$ be the exceptional divisors by 
$\omega$, let $\hat V$ be the strict transform of $V=V(f)$ to $Y$ 
and let $d_j$ be the multiplicity of $\Pi^*f$ along $D_j$.
 The A'Campo formula can be expressed as 
 \begin{eqnarray}(AC')
\qquad \zeta(t)&=\zeta_\omega(t)\prod_{P\in \mathcal V^+}(1-t^{d(P)})^{-\chi(\hat E(P)')}\notag \\
&=\zeta_\omega(t)\prod_{P\in \mathcal V^+}\prod_I(1-t^{d(P)})^{\chi( E(P)_I^*)}.\notag
 \end{eqnarray}
 where $\zeta_\omega(t)=\prod_{j=1}^s(1-t^{d_j})^{-\chi(D_j')}$.
 See Lemma \ref{modified-Varchenko} below for detail.
 
\subsubsection{Kouchnirenko formula}
Consider  a polynomial $h(\mathbf y)=\sum_{i=1}^s a_i\mathbf y^{\nu_i}\in \mathbb C[y_1,\dots,y_m]$ of $m$-variables $\mathbf y=(y_1,\dots, y_m)$ where $a_i\ne0$ for $i=1,\dots, s$.  The Newton diagram  $\De(h)$ of $h$ is defined by the convex hull of $\{\nu_i\,|i=1,\dots, s\}$.
Note that $\De(h)$ is a compact polyhedron.
We say $h$ is {\em Newton non-degenerate} if $V(h_\De)^*:=h_\De\inv(0)\cap \mathbb C^{*m}$ has no singular point for every face $\De$ of $\De(h)$ (including $\De(h)$).
A key observation for the calculation is 
\begin{Lemma}[Kouchnirenko \cite{Ko}, Oka \cite{Ok3}] \label{keyLemma}
Assume that $h(\mathbf y)\in \mathbb C[y_1,\dots,y_m]$ is a Newton non-degenerate    polynomial 
and let
$V(h)^*=\{\mathbf y\in \mathbb C^{*m}\,|\, h(\mathbf y)=0\}$.
Then the Euler characteristic  is given as 
\[ \chi(V(h)^*)=(-1)^{m-1}m! {\Vol}_m \De(h).
\]
\end{Lemma}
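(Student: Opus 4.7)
The plan is to argue by induction on $m$ via a toric compactification of $\mathbb C^{*m}$ adapted to the Newton polytope $\De(h)$, together with the additivity of Euler characteristic over the induced stratification by torus orbits. Newton non-degeneracy is precisely what makes the closure of $V(h)^*$ smooth and meet every boundary stratum transversally. For the base case $m=1$ write $h(y)=\sum_{i=0}^s a_i y^{\nu_i}$ with $\nu_0<\cdots<\nu_s$; non-degeneracy at the two vertices forces $a_0,a_s\ne 0$, and non-degeneracy on the one-dimensional face forces $h$ to have no multiple roots in $\mathbb C^*$, whence $V(h)^*$ consists of $\nu_s-\nu_0=\Vol_1\De(h)$ simple points, matching the formula $(-1)^0\cdot 1!\cdot \Vol_1\De(h)$.

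For the inductive step, fix a regular simplicial refinement $\Si^*$ of the normal fan of $\De(h)$ and let $X_{\Si^*}\supset \mathbb C^{*m}$ be the resulting smooth complete toric variety. Its boundary decomposes into torus orbits $T_F\cong \mathbb C^{*\dim F}$ indexed by the proper faces $F\subsetneq \De(h)$. Let $\bar V\subset X_{\Si^*}$ denote the closure of $V(h)^*$; by Newton non-degeneracy, $\bar V$ meets each $T_F$ transversally in $V(h_F)^*\cap T_F$, where $h_F$ viewed on $T_F$ is a Newton non-degenerate Laurent polynomial whose Newton polytope is a translate of $F$. Additivity of Euler characteristic yields
\[
\chi(\bar V)=\chi(V(h)^*)+\sum_{F\subsetneq\De(h),\;\dim F\ge 1}\chi(V(h_F)^*\cap T_F),
\]
where vertices $F$ contribute nothing since $h_F$ is then a nonzero monomial with empty zero locus on $T_F$. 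The inductive hypothesis applied to each proper face gives $\chi(V(h_F)^*\cap T_F)=(-1)^{\dim F-1}(\dim F)!\,\Vol_{\dim F}F$.

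The remaining and main step is to compute $\chi(\bar V)$ intrinsically from the polytope data and verify that isolating $\chi(V(h)^*)$ delivers $(-1)^{m-1}m!\,\Vol_m\De(h)$. Since $\bar V$ is the zero locus of a section of the ample line bundle on $X_{\Si^*}$ associated with $\De(h)$, adjunction together with the toric Chern class identity $c(TX_{\Si^*})=\prod_\rho(1+D_\rho)$ (product over the rays $\rho$ of $\Si^*$) expresses $\chi(\bar V)$ as the degree of the top-dimensional component of $[\bar V]\cdot c(TX_{\Si^*})/(1+[\bar V])$, a polynomial in the intersection numbers $D_{\rho_1}\cdots D_{\rho_k}\cdot [\bar V]^{m-k}$. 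Via the standard dictionary relating toric intersection numbers to lattice volumes of faces of $\De(h)$, these expressions rearrange into the required combinatorial identity. The main obstacle is precisely this combinatorial bookkeeping; a cleaner alternative is to deform $h$ within the Newton non-degenerate stratum (where topology is constant) to a very generic Laurent polynomial whose zero locus in $\mathbb C^{*m}$ is evaluated directly by the Bernstein--Kushnirenko theorem, yielding $(-1)^{m-1}m!\,\Vol_m\De(h)$ and thereby closing the induction.
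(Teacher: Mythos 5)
The paper does not actually prove Lemma~\ref{keyLemma}: it is cited from Kouchnirenko \cite{Ko} and Oka \cite{Ok3}, so there is no in-paper argument to compare against. Assessing your sketch on its own terms: the framework (toric compactification, additivity of $\chi$, induction on face dimension, the base case $m=1$) is sound, but the argument stops short of a proof in the inductive step. You reduce the problem to computing $\chi(\bar V)$ via adjunction and $c(TX_{\Si^*})=\prod_\rho(1+D_\rho)$, but you leave the resulting identity --- expressing the intersection numbers $D_{\rho_1}\cdots D_{\rho_k}\cdot[\bar V]^{m-k}$ as lattice volumes of faces of $\De(h)$ and showing that the alternating sum, after subtracting the inductive contributions of the proper faces, collapses to $(-1)^{m-1}m!\,\Vol_m\De(h)$ --- as ``combinatorial bookkeeping.'' That identity \emph{is} the theorem; it cannot be waved at. There is also a bookkeeping issue earlier: you take a smooth refinement $\Si^*$ of the normal fan yet index boundary orbits by proper faces $F\subsetneq\De(h)$. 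Orbits of $X_{\Si^*}$ are in bijection with cones of $\Si^*$, not with faces of $\De(h)$, and a single face can carry several orbits once its normal cone is subdivided. This is fixable --- the surplus orbits all have face functions whose Newton polytopes have dimension strictly less than the ambient torus, so they contribute $0$ by Proposition~\ref{vanishing} --- but your stratification as stated is not the orbit stratification of the variety you actually built.

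The proposed ``cleaner alternative'' does not close the gap. Bernstein's theorem (and what is usually called Bernstein--Kushnirenko) counts solutions of a generic zero-dimensional system of $m$ Laurent polynomials in $\mathbb C^{*m}$ via mixed volume; it does not directly give the Euler characteristic of a single hypersurface. The formula you want to prove is exactly Kushnirenko's Euler-characteristic theorem, so invoking ``Bernstein--Kushnirenko'' to ``evaluate directly'' is either a conflation of two distinct statements or circular. One can indeed bridge the Euler characteristic to a zero-dimensional count --- for instance by Morse theory for a generic monomial or linear height on $V(h)^*$, with boundary corrections coming from the compactification --- and that bridge is in effect what \cite{Ok3} carries out, but it is the nontrivial content of the proof, not a corollary of Bernstein. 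Likewise, the assertion that the topology of $V(h)^*$ is constant as $h$ moves in the Newton non-degenerate stratum is itself a theorem (Ehresmann plus transversality at infinity, using non-degeneracy on \emph{every} face to control the ends), which you assume rather than prove.
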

In particular, if $\dim\,\De(h)<m$, $\chi(V(h)^*)=0$.
We use also the following vanishing property.
 \begin{Proposition} \label{vanishing}
 Assume that $h(\mathbf y)$ is an arbitrary (not necessary Newton non-degenerate) polynomial such that $\dim\, \De(h)<m$. 
 Then $\chi(V(h)^*)=0$.
 \end{Proposition}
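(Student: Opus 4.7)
The plan is to reduce the statement to the well-known fact that $\chi(\mathbb C^*)=0$ by exhibiting $V(h)^*$ as a trivial $\mathbb C^*$-bundle over a hypersurface in a lower-dimensional torus, via a monomial (toric) change of coordinates.

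First I would exploit the hypothesis $\dim\,\De(h)<m$. The Newton polytope $\De(h)$ is the convex hull of the integer exponents $\nu_1,\dots,\nu_s$, so the affine span of $\De(h)$ is a proper affine subspace of $\mathbb R^m$ defined over $\mathbb Q$. Fixing a reference exponent, say $\nu_1$, the linear span of $\De(h)-\nu_1$ has dimension $<m$, so there exists a primitive integer vector $P\in\mathbb Z^m$ with $\langle P,\nu-\nu_1\rangle=0$ for every $\nu$ appearing in $h$. Equivalently, $\langle P,\nu_i\rangle=c$ is a constant integer for $i=1,\dots,s$.

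Next I would perform a monomial coordinate change. Extend $P$ to a basis of $\mathbb Z^m$ forming the rows of a unimodular matrix $B$, and let $A=B^{-1}$ with columns $Q_1,\dots,Q_m$. Define the toric automorphism $\psi:\mathbb C^{*m}\to\mathbb C^{*m}$ by $w_i=\mathbf y^{Q_i}$. Then for any exponent $\nu$ we have $\mathbf y^{\nu}=\mathbf w^{B\nu}$, and by construction the first entry of $B\nu$ equals $\langle P,\nu\rangle=c$ for every monomial of $h$. Consequently
\[
h(\psi\inv(\mathbf w))=w_1^{c}\,\tilde h(w_2,\dots,w_m)
\]
for some Laurent polynomial $\tilde h$ in the remaining $m-1$ variables. (If the constant $c$ is negative, one simply clears the denominator; as a unit on the torus this does not affect the zero set.)

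Finally I would read off the Euler characteristic. Since $\psi$ is a biholomorphism of $\mathbb C^{*m}$ and $w_1^{c}$ is a nowhere-vanishing unit on $\mathbb C^{*}_{w_1}$,
\[
V(h)^{*}\ \cong\ \mathbb C^{*}_{w_1}\times\{\tilde h(w_2,\dots,w_m)=0\}\subset \mathbb C^{*}_{w_1}\times\mathbb C^{*(m-1)}.
\]
The Euler characteristic is multiplicative on such a product (the projection is a trivial fibration), so
\[
\chi(V(h)^{*})=\chi(\mathbb C^{*})\cdot\chi\!\left(\{\tilde h=0\}\cap\mathbb C^{*(m-1)}\right)=0,
\]
regardless of what the second factor is. There is no serious obstacle here; the only point to be careful about is that the construction is purely combinatorial and does \emph{not} require Newton non-degeneracy of $h$, which is exactly why the proposition applies beyond the scope of Lemma~\ref{keyLemma}.
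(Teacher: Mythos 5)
Your proof is correct and takes essentially the same approach as the paper's: a monomial (toric) change of coordinates $\mathbf y \mapsto \mathbf w$ given by a unimodular matrix, which turns $h$ into a monomial times a Laurent polynomial in fewer variables, followed by multiplicativity of the Euler characteristic over the resulting $\mathbb C^*$-factor. The only cosmetic difference is that the paper reduces all the way down to $r=\dim\De(h)$ variables in one step (yielding a $\mathbb C^{*(m-r)}$ factor), while you peel off a single $\mathbb C^*$ factor, which already suffices; you also supply the detail the paper leaves implicit, namely why the required unimodular matrix exists (primitive integer normal $P$ to the affine span, extended to a lattice basis).
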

 \begin{proof}  Let $r=\dim\, \De(h)$. We can take a unimodular matrix $A$ so that  after change of variables by $\mathbf y=\pi_A(\mathbf x)$, we can write 
 $ h(\pi_A(\mathbf x))=M h'(x_1,\dots, x_r)$ where $M$ is a monomial of $x_1,\dots, x_m$ and $h'$ is a polynomial of $r$-variables $x_1,\dots, x_r$.
As $V(h')^*$ is a product  $(V(h')^*\cap\mathbb C^{*r})\times \mathbb C^{*(m-r)}$, the Euler characteristic is zero.
As 
$V(h')^*$ is isomorphic to $V(h)^*$, the assertion follows from this expression.
 \end{proof}

\subsection{Varchenko formula}
Suppose that $f(\mathbf z)$ is Newton non-degenerate. 
For each non-vanishing coordinate subspace $\mathbb C^I$,
$I\subset \{1,\dots,n\}$, let  $\mathcal P_I$ be the set of primitive integer weight vectors of 
 $f^I$
 which correspond to the maximal dimensional faces of $\Ga(f^I)$.  
Using a toric modification
$ \hat\pi:X\to   \mathbb C^n$ which is  admissible with the dual Newton diagram $\Ga^*(f)$,  the integer
$\chi(\hat E(P)')$ can be computed combinatorially.
Namely the zeta function is given  as 
\begin{Lemma}[Varchenko \cite{Va}]\label{Va}
Suppose that $f(\mathbf z)$ is Newton non-degenerate.  Then
\begin{eqnarray} \label{Va1}
  \zeta(t)&=\prod_{I}\zeta_I(t),\quad \zeta_I(t)=\prod_{Q\in \mathcal P_I}(1-t^{d(Q;f^I)})^{\chi(E(Q)^*)}
\end{eqnarray}
where  $f^I$ is the restriction of $f$ to the coordinate subspace $\mathbb C^I$.
\end{Lemma}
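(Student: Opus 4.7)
My plan is to derive Varchenko's formula from the A'Campo formula by opening each exceptional Euler characteristic via the toric stratification of \S\ref{toric-stratification}, and then discarding the strata killed by Proposition~\ref{vanishing}.

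First I would fix a small regular simplicial subdivision $\Sigma^*$ of $\mathbb R^n_{\geq 0}$ admissible with $\Gamma^*(f)$, and consider the associated toric modification $\hat\pi: X \to \mathbb C^n$. Newton non-degeneracy of $f$ guarantees, chart by chart, that the strict transform $\tilde V$ is smooth and meets $\hat\pi^{-1}(0) = \bigcup_{P \in \mathcal V^+} \hat E(P)$ transversally in a simple normal crossings configuration, so $\hat\pi$ is already a good resolution of $f$. A'Campo's formula therefore gives
\[
\zeta(t) = \prod_{P \in \mathcal V^+} (1 - t^{d(P;f)})^{-\chi(\hat E(P)')}.
\]

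Next I would substitute the toric decomposition identity (\ref{Acampo-degenerate}), $\chi(\hat E(P)') = -\sum_I \chi(E(P)_I^*)$, obtaining the double product
\[
\zeta(t) = \prod_{P \in \mathcal V^+} \prod_I (1 - t^{d(P;f)})^{\chi(E(P)_I^*)},
\]
with $I$ ranging over those subsets of $\{1,\ldots,n\}$ for which $\{P\} \cup \{e_j : j \notin I\}$ spans a simplicial cone of $\Sigma^*$. Inside the torus $\hat E(P)_I^* \cong \mathbb C^{*(|I|-1)}$, the stratum $E(P)_I^*$ is cut out by the face polynomial of $f^I$ along the face $\Delta(P; f^I)$; by Proposition~\ref{vanishing} its Euler characteristic vanishes unless $\Delta(P; f^I)$ is of top dimension $|I|-1$ in $\mathbb R^I$, equivalently unless the induced weight of $P$ on $\mathbb R^I$ selects a maximal face of $\Gamma(f^I)$, i.e.\ corresponds to some $Q \in \mathcal P_I$.

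Finally I would regroup the surviving factors by pairs $(I, Q)$ with $Q \in \mathcal P_I$. For each such $Q$ the contributing vertices $P$ are those of $\mathcal V^+$ whose projection to $\mathbb R^I$ lies in the relative interior of the dual cone of $Q$ in $\Gamma^*(f^I)$; on this locus the face polynomials coincide, so the multiplicity $d(P;f)$ matches $d(Q;f^I)$ and every surviving factor contributes to $(1 - t^{d(Q;f^I)})$. By additivity of the Euler characteristic, summing the exponents $\chi(E(P)_I^*)$ over these $P$'s assembles the toric pieces into the single global hypersurface $V(f^I_Q)^* \subset \mathbb C^{*|I|}$, whose Euler characteristic is $\chi(E(Q)^*)$ as computed by Kouchnirenko's Lemma~\ref{keyLemma}. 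This yields the Varchenko factor $(1 - t^{d(Q;f^I)})^{\chi(E(Q)^*)}$, and collecting over all $Q \in \mathcal P_I$ and all $I$ produces $\zeta(t) = \prod_I \zeta_I(t)$.

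The main obstacle I expect lies in this regrouping step: one must verify that the $\Sigma^*$-subdivision of the dual cone of each $Q \in \mathcal P_I$ slices the intrinsic torus stratum $V(f^I_Q)^*$ into pieces $E(P)_I^*$ whose multiplicities and Euler characteristics combine correctly into $(1-t^{d(Q;f^I)})^{\chi(E(Q)^*)}$. This amounts to an independence-of-resolution statement for the Euler characteristic of $V(f^I_Q)^*$ computed via two different toric descriptions of the same face, and is precisely where the Newton non-degeneracy assumption on every face of $\Gamma(f)$ (and hence of every $\Gamma(f^I)$) is used in full force through Kouchnirenko's formula.
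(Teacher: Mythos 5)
Your overall strategy — start from the A'Campo formula applied to the toric resolution, open up each $\chi(\hat E(P)')$ via the toric stratification (\ref{Acampo-degenerate}), kill the non-maximal strata with Proposition~\ref{vanishing}, and match the survivors against the factors indexed by $\mathcal P_I$ — is exactly the route the paper itself sketches in the remark following Lemma~\ref{Va} and in Lemma~\ref{modified-Varchenko} (the paper does not give an independent proof, but cites Varchenko and explains this correspondence). The first three steps of your proposal are sound: Newton non-degeneracy does make $\hat\pi$ a good resolution, the A'Campo product is correct, and Proposition~\ref{vanishing} does eliminate every stratum $E(P)_I^*$ whose face $\Delta(P)\cap\mathbb R^I$ is not a top-dimensional face of $\Gamma(f^I)$.

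The regrouping step, however, contains a genuine error. You assert that the surviving strata $E(P)_I^*$, summed over the relevant $P$, ``assemble into the single global hypersurface $V(f^I_Q)^*\subset\mathbb C^{*|I|}$, whose Euler characteristic is $\chi(E(Q)^*)$ as computed by Kouchnirenko's Lemma~\ref{keyLemma}.'' This fails on two counts. First, each $E(P)_I^*$ is a hypersurface in the $(|I|-1)$-dimensional torus $\hat E(P)_I^*$, whereas $V(f^I_Q)^*$ lives in the $|I|$-dimensional torus; these objects cannot be glued into one another. Second, $f^I_Q$ is weighted homogeneous for the weight $Q$, so $V(f^I_Q)^*$ is a free $\mathbb C^*$-fibration and has Euler characteristic zero; it is not $\chi(E(Q)^*)$, and indeed Kouchnirenko's Lemma~\ref{keyLemma} applied directly to $f^I_Q$ returns $0$ because $\dim\Delta(f^I_Q)=|I|-1<|I|$. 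The correct count $\chi(E(Q)^*)$ is the quotient of $V(f^I_Q)^*$ by this $\mathbb C^*$-action and is computed by the normalized formula (\ref{Va2}), not by Lemma~\ref{keyLemma} as stated.

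Moreover, the ``summing over many $P$'' picture is a red herring: in an admissible regular subdivision $\Sigma^*$, for each $I$ and each $Q\in\mathcal P_I$ there is \emph{exactly one} vertex $P\in\mathcal V^+$ adjacent to $e_{I^c}$ with $\Delta(P;f)\cap\mathbb R^I=\Delta(Q;f^I)$ and $d(P)=d(Q;f^I)$ — this is the content of the paper's remark after Lemma~\ref{Va}. The reason is that the link of the cone $e_{I^c}$ in $\Sigma^*$, projected to $\mathbb R^n/\mathbb R^{I^c}\cong\mathbb R^I$, is a fan refining $\Gamma^*(f^I)$, and a one-dimensional cone (the ray dual to a maximal face $\Delta(Q;f^I)$) admits no nontrivial refinement, so it carries a unique primitive vertex. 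For that $P$, the defining data of $E(P)_I^*$ in its $(|I|-1)$-torus chart is precisely the face polynomial of $\Delta(Q;f^I)$ after a unimodular change of coordinates, giving the canonical identification $E(P)_I^*\cong E(Q)^*$. So there is no ``independence-of-resolution'' issue to overcome in the last step; what is needed instead is the uniqueness of $P$ and the explicit chart computation, which your proposal does not supply.
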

The number $\chi(E(Q)^*)$ satisfies the following equality, if $f$ is Newton non-degenerate.
\begin{Proposition}Suppose that $f(\mathbf z)$ is Newton non-degenerate. Then the above integer satisfies the equality:
\begin{eqnarray}
\label{Va2}
 \chi(E(Q)^*)&=&(-1)^{|I|}|I|!{\Vol}_{|I|}{\Cone}(\De(Q;f^I) )/d(Q;f^I).
\end{eqnarray}
\end{Proposition}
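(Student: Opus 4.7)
The plan is to reduce the statement to a direct application of Kouchnirenko's Lemma combined with a standard cone-volume identity. The general case $I\subsetneq\{1,\dots,n\}$ follows from the top-dimensional case $I=\{1,\dots,n\}$ by applying the same argument to the restriction $f^I$, which is Newton non-degenerate whenever $f$ is, so I would only treat the latter.

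First I fix a maximal cone $\si=\Cone(P_1,\dots,P_n)\in\Si^*$ with $P_1=Q$; by regularity of $\Si^*$, $(Q,P_2,\dots,P_n)$ is a $\mathbb Z$-basis of $\mathbb Z^n$. Working in the toric chart $\mathbb C_\si^n$, the stratum $\hat E(Q)^*$ appears as $\{u_{\si 1}=0\}\cap\mathbb C^{*(n-1)}$, and the formula for the defining function of $\widetilde V$ recalled in the preliminaries simplifies, upon setting $u_{\si 1}=0$, to the Laurent polynomial
\[
g(u_{\si 2},\dots,u_{\si n})=\sum_{\nu\in\De(Q)}a_\nu\prod_{j=2}^{n}u_{\si j}^{\langle P_j,\nu\rangle-d(P_j)},
\]
because the exponent of $u_{\si 1}$ in every term is $\langle Q,\nu\rangle-d(Q)=0$. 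Non-degeneracy of $f$ on the face $\De(Q)$ and its subfaces implies Newton non-degeneracy of $g$, so Lemma~\ref{keyLemma} yields $\chi(E(Q)^*)=\chi(V(g)^*)=(-1)^{n}(n-1)!\,\Vol_{n-1}\De(g)$.

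The heart of the argument is then a volume identity. The affine map $\nu\mapsto(\langle P_j,\nu\rangle-d(P_j))_{j=2}^{n}$ sends $\De(Q)$ onto $\De(g)$, and unimodularity of $(Q,P_2,\dots,P_n)$ forces it to send the affine lattice $\mathbb Z^n\cap\{\langle Q,\cdot\rangle=d(Q)\}$ bijectively onto $\mathbb Z^{n-1}$. Hence $\Vol_{n-1}\De(g)$ equals the intrinsic lattice $(n-1)$-volume of $\De(Q)$ in its support hyperplane. The pyramid identity $n\,\Vol_n\Cone(\De(Q))=d(Q)\,\Vol^{\mathrm{lat}}_{n-1}\De(Q)$, valid because $Q$ is primitive and $\De(Q)$ lies in $\{\langle Q,\cdot\rangle=d(Q)\}$, then gives $(n-1)!\,\Vol_{n-1}\De(g)=n!\,\Vol_n\Cone(\De(Q))/d(Q)$, and substituting back produces the claimed formula.

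The one place where care is genuinely required is the bookkeeping of lattice versus Euclidean volumes on the support hyperplane: one must verify that unimodularity of $(Q,P_2,\dots,P_n)$ identifies the intrinsic lattice volume of $\De(Q)$ with the Euclidean $(n-1)$-volume of $\De(g)$ as computed in the toric chart. This is standard but is the step most easily mishandled. Everything else is a direct unpacking of the description of $E(Q)^*$ already recalled, and a one-line application of Kouchnirenko's formula.
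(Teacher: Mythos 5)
Your proof is correct and follows exactly the route the paper itself points to: the paper does not spell out a proof of this Proposition but attributes it to Varchenko and notes in the subsequent Remark that the combinatorial computation comes from Lemma~\ref{keyLemma} (Kouchnirenko) applied in the toric chart, which is precisely what you do. Your reduction to $I=\{1,\dots,n\}$ via $f^{I}$, the computation of the defining Laurent polynomial of $E(Q)^{*}$ in the chart $\Cone(Q,P_2,\dots,P_n)$, the transfer of non-degeneracy, and the unimodular-affine-map plus pyramid-volume argument are all sound (and the signs check out: $(-1)^{(n-1)-1}=(-1)^n$ matches the stated $(-1)^{|I|}$). This is essentially the standard derivation of Varchenko's formula, filled in with more detail than the paper records.
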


\begin{Remark}{\rm The vertices in $\mathcal P_I$ are used to compute the zeta function but the vertices of  $\mathcal P_I$ are not in $\mathcal V^+$. Thus they dot not appear in A'Campo formula. The correspondence of A'Campo formula and the Varchenko formula is explained by Lemma \ref{keyLemma}
and the following observation: for any $Q\in \mathcal P_I$, there is a unique vertex $P\in \mathcal V^+$ such that 
$P$ and $\{e_i,\,i\notin I\}$ span a simplex of $\Si^*$ and $\De(P;f)\cap \mathbb R^I=\De(Q;f^I)$
and $d(P)=d(Q)$. See \S 5, Chapter III, \cite{Okabook} for further discussion.
}
\end{Remark}

\subsection{Almost non-degenerate functions}\label{almost-section}
Consider a  convenient analytic function $f(\mathbf z)=\sum_{\nu} a_\nu {\mathbf z}^\nu$ which is expanded in a Taylor series and let $\Gamma(f)$ be the Newton boundary.
Let $\hat\pi:X\to   \mathbb C^n$ be a toric modification with respect to 
 $\Si^*$ which is a small  simplicial regular subdivision $\Si^*$ of 
the dual Newton diagram $\Ga^*(f)$. 
Let $\mathcal M$ be the set of maximal dimensional faces of $\Ga(f)$ and let $\mathcal M_0$ be a given subset of $\mathcal M$ so that 
for $\De\in \mathcal M_0$, $f_\De:\mathbb C^{*n}\to\mathbb C$ is Newton degenerate.
For $\De\in \mathcal M_0$, let  $P_\De$  be the unique vertex   which corresponds to $\De$:  $\De(P_\De)=\De$. Recall that $\hat E(P)$  is an exceptional divisor which corresponds to  the vertex $P$. We consider the following conditions on $f$.
\nl
 (A1) For any face $\De$ of $\Ga(f)$ with either $\De\in \mathcal M\setminus \mathcal M_0$ or $\dim\, \De\le n-2$,
$f$ is Newton non-degenerate on $\De$. For  $\De\in \mathcal  M_0$, $f_{\De}:\mathbb C^{*n}\to \mathbb C$ has a finite number of 1-dimensional critical loci which are $\mathbb C^*$-orbits through the origin. 
Recall that 
$f_\De(\mathbf z)$ is a weighted homogeneous polynomial with respect to the weight vector $P_\De={}^t(p_1,\dots,p_n)$ and there is an associated $\mathbf C^*$-action defined by $t\circ (z_1,\dots, z_n)=(t^{p_1}z_1,\dots, t^{p_n}z_n),\,t\in \mathbf C^*$.

Let $\si={\Cone} (P_1,\dots, P_n)$ be a simplicial cone in $\Si^*$ such that $P_1=P_\De$.
Let $\mathbf u_\si=(u_{\si 1},\dots, u_{\si n})$ be the corresponding toric coordinate chart.
The strict transform $\widetilde V$ of $V(f)$ is  defined  by $\widetilde f(\mathbf u_\si)=0$ where
$\widetilde f$ is defined by the equality:
\[
\hat f(\mathbf u_\si):={ {\hat\pi}}^*f(\mathbf u_\si)=\left(\prod_{i=1}^n u_{\si,i}^{d(P_i)}\right)\, \widetilde f(\mathbf u_\si)
\]
 and $E(P_1)\subset \hat E_0$ is defined by 
 $\{\mathbf u_\si\,|\,u_{\si 1}=0,g_\De(u_{\si 2},\dots, u_{\si n})=0\}$
 where
  \[\begin{split}
  g_{\De}(u_{\si, 2},\dots, u_{\si n})&:=\widetilde f(0,u_{\si 2},\dots,u_{\si n})\\
  &=f_\De(\pi_\si(\mathbf u))/\prod_{i=1}^n u_{\si,i}^{d(P_i)}.
  \end{split}
  \]
 For simplicity, we denote  $\hat\pi^*f$ by $\hat f$ hereafter.
 The assumption (A1) implies that  $E(P_1)$  as a divisor of $\hat E(P_1)$ has 
 a finite number of  isolated singular points. In fact, this follows from the isomorphism:
 \[
\hat \pi_\si: \,V(g_\De)\cap\mathbb C_\si^{*n}=\mathbb C^*\times (V(g_\De|_{ \{u_{\si 1}=0\}})\cap\mathbb C_\si^{*(n-1)})\to
 V(f_\De)\cap\mathbb C^{*n}.
 \]
  Let  $S(\De)$ be the set of the singular points of $E(P_1)$.
 Take any $q\in S(\De)$.
An {\em admissible coordinate  chart at $q$}  is  an analytic coordinate chart $(U_q,\mathbf w)$, $\mathbf w=(w_1,\dots,w_n)$ centered at $q$ where $U_q$ is an open neighborhood of $q$ and $(w_2,\dots, w_n)$ is an analytic coordinate change of $(u_{\si 2},\dots, u_{\si n})$ and  $w_1=u_{\si 1}$.  
We say that $f$ is {\em a weakly  almost non-degenerate function}  if it satisfies  $(A1)$.
 As a second condition, we consider
  \nl
 ($A_{2}$)
 {\em For any $\De\in \mathcal M_0$ and $q\in S(\De)$,  there exists an admissible coordinate $(U_q,\mathbf w)$ centered at $q$ such that  
 ${\hat \pi}^*f(\mathbf w)$ is Newton non-degenerate  and pseudo-convenient with respect to this coordinates 
$(U_q,\mathbf w)$.}

We say that $f$ is  {\em an  almost non-degenerate function} 
 if 
 it satisfies 
 ($A1$) and ($A2$). 
For a weakly  almost non-degenerate functions, 
  the following theorem holds.
\begin{Theorem}[\cite{Almost}] \label{main1}
Assume that $f$  is a weakly almost non-degenerate function.
Then the zeta function of $f$ is given by 
\[\zeta(t)=\zeta(t)'\prod_{\De\in \mathcal M_0}\zeta_\De(t)\]
where
$\zeta'(t)$ is the zeta function of $\hat  f$ outside of the union of $\eps$ balls centered at $q\in S(\De),\,\De\in \mathcal M_0$ and 
$\zeta'(t)=\zeta^{(s)}(t)\zeta^{er}(t)$ where 
  $\zeta^{(s)}(t)$ is the zeta function of the Newton non-degenerate function $f_s$ with $\Ga(f_s)=\Ga(f)$
and 
\[
\zeta^{er}(t)=\prod_{\De\in \mathcal M_0}(1-t^{d(P_\De}))^{(-1)^{n-1} \mu_{\De  }}
\]
where $\mu_{\De}$ is the sum of Milnor numbers $\mu(g_\De;q)$ for all  $q\in S(\De)$.
 $\zeta_\De(t)$ is the product of the  zeta function of $\hat f$ at $q\in S(\De)$. 
 
 If $f$ is almost Newton non-degenerate (so it  satisfies $(A2)$), $\zeta_\De(t)$ can be combinatorially computed by Varchenko formula.
\end{Theorem}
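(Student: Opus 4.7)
The plan is to apply A'Campo's formula (AC') to a good resolution $\Pi = \hat\pi \circ \omega \colon Y \to \mathbb C^n$ obtained by composing the admissible toric modification $\hat\pi$ with a further modification $\omega \colon Y \to X$ that resolves the isolated singularities of the strict transform $\tilde V$. By $(A1)$ these singularities are exactly the finitely many points in $\bigcup_{\Delta \in \mathcal M_0} S(\Delta)$, each lying in $\hat E(P_\Delta)$, so $\omega$ is an isomorphism away from arbitrarily small neighborhoods of them. Writing $\zeta(t)$ via (AC'), we group the factors by: (i) vertices $P \in \mathcal V^+$ not of the form $P_\Delta$ with $\Delta \in \mathcal M_0$; (ii) vertices $P_\Delta$ with $\Delta \in \mathcal M_0$; and (iii) the new exceptional divisors $D_1,\dots,D_s$ of $\omega$ together with the parts of $\hat E(P_\Delta)$ lying inside small neighborhoods of the points $q \in S(\Delta)$.

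For group (i), $\omega$ is an isomorphism near $\hat E(P)'$, so $\chi(\hat E(P)'_Y) = \chi(\hat E(P)')$ and this Euler characteristic is exactly the combinatorial number given by Varchenko's Lemma \ref{Va}. Taken together with the vertices $P_\Delta$ treated as if $f$ were non-degenerate, these contributions assemble into $\zeta^{(s)}(t)$, the Varchenko zeta function of any Newton non-degenerate $f_s$ with $\Gamma(f_s)=\Gamma(f)$. For group (ii), the key step is to compare $\chi(\hat E(P_\Delta)')$ between the singular and the hypothetical non-degenerate situations. Since $\hat E(P_\Delta)$ is unchanged, the only difference comes from replacing the smooth divisor $E(P_\Delta)_{\mathrm{ns}}$ by the singular one $E(P_\Delta)_{\mathrm{sing}}$, which lies in the same linear system $|d(P_\Delta) H|$ on $\hat E(P_\Delta)$. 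By the standard smoothing formula for isolated hypersurface singularities (the Milnor fiber of $g_\Delta$ at $q$ has Euler characteristic $1 + (-1)^{n-2}\mu(g_\Delta;q)$, while the local cone contributes $1$), one obtains
$$\chi(E(P_\Delta)_{\mathrm{ns}}) - \chi(E(P_\Delta)_{\mathrm{sing}}) = (-1)^{n-2}\mu_\Delta,$$
hence the exponent $-\chi(\hat E(P_\Delta)')$ of the factor $(1-t^{d(P_\Delta)})$ in A'Campo's product picks up an extra $(-1)^{n-1}\mu_\Delta$, yielding the factor $(1-t^{d(P_\Delta)})^{(-1)^{n-1}\mu_\Delta}$. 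Multiplying over $\Delta \in \mathcal M_0$ gives $\zeta^{er}(t)$.

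For group (iii), restricting $\Pi$ to a small neighborhood of each $q \in S(\Delta)$ gives a good resolution of the isolated singularity of $\hat\pi^*f$ at $q$; A'Campo's formula applied locally then produces the Milnor zeta function of $\hat f$ at $q$. The product of these local zeta functions over $q \in S(\Delta)$ is $\zeta_\Delta(t)$, and taking the product over $\Delta \in \mathcal M_0$ finishes the factorization $\zeta(t) = \zeta^{(s)}(t)\,\zeta^{er}(t)\,\prod_{\Delta \in \mathcal M_0}\zeta_\Delta(t)$. When $(A2)$ also holds, $\hat f$ is Newton non-degenerate and pseudo-convenient in the admissible chart $(U_q,\mathbf w)$, so Lemma \ref{Va} applies to $\hat f|_{U_q}$ and expresses $\zeta_\Delta(t)$ combinatorially.

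The main obstacle is the sign-and-multiplicity bookkeeping in the exponent comparison: one must track carefully how the discrepancy $(-1)^{n-1}\mu_\Delta$ between singular and smooth Euler characteristics is produced globally on the compact divisor $\hat E(P_\Delta)$, and how the additivity of $\chi$ (using $\chi = \chi_c$ for complex algebraic varieties) moves this correction into the A'Campo exponent $-\chi$. The passage between the global Euler characteristic comparison on $\hat E(P_\Delta)$ and the local Milnor formula at $q$ is precisely the content of the cited \emph{modified Varchenko} Lemma \ref{modified-Varchenko}; once it is in place, the remainder of the argument is an assembly of A'Campo's formula applied both globally (to $\Pi$) and locally (to $\hat f$ at each $q$).
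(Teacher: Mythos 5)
Your proof is correct and follows the same route the paper sets up (and that \cite{Almost} uses): apply A'Campo's formula to the composite resolution $\Pi=\hat\pi\circ\omega$, split the exceptional divisors into the toric ones $\hat E(P)_Y$ and the new divisors $D_j$ of $\omega$, use the diffeomorphism $\hat E(P)_Y'\cong\hat E(P)'$ to transfer the toric contributions, and extract the $\zeta^{er}$ correction by comparing $\chi(E(P_\Delta))$ in the degenerate and non-degenerate situations via the local Milnor-fibre smoothing formula $\chi(F_q)=1+(-1)^{n-2}\mu(g_\Delta;q)$. Your sign computation for the exponent $(-1)^{n-1}\mu_\Delta$ is right, and the identification of $\prod_j(1-t^{m_j})^{-\chi(D_j')}$ with $\prod_\Delta\zeta_\Delta(t)$ works because the strict transforms $\hat E(P_\Delta)_Y$ and $\hat V_Y$ contribute nothing over $q$ (their $E''$-strata over $q$ are empty, being absorbed by the $D_j$'s). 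One small imprecision: the germ of $\hat\pi^*f$ at $q$ is \emph{not} an isolated singularity --- $\hat\pi^*f=u_{\sigma 1}^{d(P_\Delta)}\tilde f$ vanishes along the whole divisor $\hat E(P_\Delta)$, so the zero set near $q$ is the union of a smooth hypersurface and the isolated-singular $\tilde V$; this is harmless since A'Campo's formula applies to non-isolated germs, but the phrase ``good resolution of the isolated singularity of $\hat\pi^*f$ at $q$'' should be replaced by ``good resolution of the (non-isolated) germ of $\hat\pi^*f$ at $q$.''
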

\begin{Remark}In \cite{Almost}, we have assumed the condition $ (A_2)$ and  $\hat f$ is pseudo-convenient at $q$ but these assumptions are not necessary. If $(A_2)$ condition is not satisfied,  the assertion is still true but to compute the zeta function $\zeta_\De(t)$,  we need an explicit resolution $\omega: Y\to X$ of $\hat f$ and then use the formula of  A'Campo instead of Varchenko's formula.
\end{Remark}

\subsection{Zeta multiplicity}
By A'Campo formula, the zeta function $\zeta_f(t)$ of a germ of analytic function $f$ is written as
$\prod_{j=1}^s(1-t^{d_j})^{\nu_j}$ with mutually distinct $d_1,\dots, d_s$ and non-zero integers $\nu_1,\dots,\nu_s$. Thus we can write
${\rm{div}}(\zeta)=\sum_{i=1}^s\nu_i\La_{d_i}$.
 {\em  The zeta multiplicity of $f$} is defined  as $d_{min}:=\min\,\{d_i\,|\, i=1,\dots,s\}$ and we denote it as $m_\zeta(f)$. Suppose $d_{min}=d_{\iota},\,
 1\le \exists\iota\le s$. We call the factor $(1-t^{d_{\iota}})^{\nu_{\iota}}$ {\em the zeta multiplicity factor}.
 In general, $m_\zeta(f)\ge m(f)$ where $m(f)$ is the multiplicity of $f$, the lowest degree of the Taylor expansion of $f$ at $\bf 0$.
This follows from the following observation.
\begin{Proposition} \label{multiplicity estimation}
Assume that $\hat\pi: X\to  \mathcal U_0$ is a good resolution of an analytic function $f(\mathbf z)$  of multiplicity $m$
and put $\hat\pi\inv(V)=\tilde V\cup_{i=1}^s E_i$ where $\tilde V$ is the strict transform of $V=f\inv(0)$. Let $m_i$ be the multiplicity of $\hat \pi^*f$ along $E_i$. Then $m_i\ge m$.
\end{Proposition}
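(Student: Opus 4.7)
The plan is to reduce the bound to a monomial-by-monomial estimate on the pull-back of $f$, using the Taylor expansion of $f$ at the origin together with the fact that each coordinate function pulls back to a function vanishing on every exceptional divisor lying over $\mathbf{0}$.

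First I would expand $f$ as a convergent power series $f(\mathbf z) = \sum_{|\nu|\ge m} a_\nu \mathbf z^\nu$; by the definition of multiplicity, no monomial of degree strictly less than $m$ appears in this expansion. Next, for each coordinate $z_j$ set $\alpha_{i,j} := \ord_{E_i}(z_j \circ \hat\pi)$. Since $E_i$ is an exceptional divisor contracted to the origin, $\hat\pi(E_i) = \{\mathbf{0}\}$, so $z_j\circ\hat\pi$ vanishes identically along $E_i$ for every $j$; in particular $\alpha_{i,j} \ge 1$ for all $i,j$.

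From this, for each monomial $\mathbf z^\nu$ appearing in the expansion one has
\[
\ord_{E_i}\bigl(\hat\pi^*(\mathbf z^\nu)\bigr) \;=\; \sum_{j=1}^n \nu_j\, \alpha_{i,j} \;\ge\; \sum_{j=1}^n \nu_j \;=\; |\nu| \;\ge\; m.
\]
Since the order of a convergent sum of analytic functions along an irreducible divisor is at least the minimum of the orders of its summands, combining these estimates over all $\nu$ with $a_\nu \ne 0$ yields $m_i = \ord_{E_i}(\hat\pi^* f) \ge m$, as desired.

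The only subtle point needing care is the justification that $\hat\pi(E_i) \subset \{\mathbf{0}\}$, i.e., that the exceptional divisors under consideration really are those contracted to the origin; this is implicit in the setup, as the zeta multiplicity is a local invariant at $\mathbf{0}$ and only the exceptional components contained in $\hat\pi\inv(\mathbf{0})$ enter the A'Campo formula at the origin. Once this is granted, the rest of the argument is the elementary monomial degree-count above; no resolution-specific information beyond the contraction property is actually used.
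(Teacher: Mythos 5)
The paper actually states Proposition~\ref{multiplicity estimation} without giving any proof of it (it is introduced by ``This follows from the following observation'' and then simply stated). Your argument supplies the natural elementary proof, and the core of it is correct: expand $f=\sum_{|\nu|\ge m}a_\nu\mathbf z^\nu$, use $\ord_{E_i}(z_j\circ\hat\pi)\ge 1$ to get $\ord_{E_i}(\hat\pi^*\mathbf z^\nu)\ge|\nu|\ge m$, and pass to the sum.

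The one point on which I would push back is your justification of the hypothesis $\hat\pi(E_i)=\{\mathbf 0\}$. This hypothesis is genuinely needed: if $E_i$ dominates a positive-dimensional branch of the singular locus, the multiplicity $m_i$ reflects the transversal multiplicity of $f$ along that branch, which can be strictly smaller than $m(f)$ (for example $f=y^2z+x^5$ has $m=3$ at the origin, while blowing up the $z$-axis yields an exceptional divisor of multiplicity $2$). So the statement as written should be read as carrying the standing hypothesis (satisfied everywhere it is invoked in the paper) that $f$ has an isolated singularity at the origin, in which case $\hat\pi$ is an isomorphism over $\mathcal U_0\setminus\{\mathbf 0\}$ and every $E_i$ is contracted to $\mathbf 0$. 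Your appeal to the A'Campo formula is not quite the right reason: in that formula the contribution of $E_j$ is governed by $E_j''=E_j'\cap\hat\pi^{-1}(\mathbf 0)$, which can be nonempty even when $E_j$ is not contained in $\hat\pi^{-1}(\mathbf 0)$, so non-contracted divisors do in general enter the zeta function at the origin. The cleaner justification is simply the isolated-singularity assumption; with that in place, your monomial estimate is exactly what is wanted.
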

\subsection{L\^e-Ramunujam result for zeta-functions}
Consider a piecewise analytic family $f_s(\mathbf z),\,0\le s\le 1$ of functions with isolated singularity at the origin and suppose that the   Milnor number $\mu(f_s)$ of $f_s$ at $\mathbf 0\in \mathbb C^n$ is constant for $s$. ( $f_t$ can be piecewise $C^\infty$.)
Then 
\begin{Lemma}\label{zeta-const}
\label{zeta-constant}
The zeta function $\zeta_{f_s}(t)$ of $f_s$ is  independent of  $s$ and 
coincides with $\zeta_{f_0}(t)$.
\end{Lemma}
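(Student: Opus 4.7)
Since the family is piecewise analytic, it suffices by the transitivity of equality to treat each analytic piece $f_t(\mathbf z)$, $t\in U_i$, separately and match them at the common endpoints. So reduce to the case of a single analytic one-parameter family $f_t(\mathbf z)$, $t\in U$ where $U\subset\mathbb C$ is a neighborhood of a compact interval, with constant Milnor number $\mu$ at the origin. First I would invoke the theorem of L\^e--Saito: for a $\mu$-constant analytic family, the Thom $a_{f}$-condition is satisfied along the parameter axis $\{0\}\times U$. Equivalently, the total space $\mathcal X=\{(\mathbf z,t):f_t(\mathbf z)=0\}$ admits a Whitney stratification near $(\mathbf 0,t_0)$ for which the projection onto $U$ is a stratified submersion.

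Next, I would use this regularity to build a simultaneous Milnor fibration. Fix $t_0\in U$ and a small disk $U_0\ni t_0$. For appropriate $0<\de\ll\eps\ll 1$, the $a_f$-condition (together with standard Thom--Mather arguments) allows one to lift a vector field on $U_0$ to a controlled vector field on
$\{(\mathbf z,t):\|\mathbf z\|\le\eps,\ 0<|f_t(\mathbf z)|\le\de,\ t\in U_0\}$
that is tangent to the level sets $f_t(\mathbf z)=\eta$ and preserves $\|\mathbf z\|=\eps$. Integrating this vector field produces diffeomorphisms between the Milnor fibrations of $f_{t_0}$ and $f_t$ for $t\in U_0$ as fibrations over $D_\de^*$. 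In particular the geometric monodromies $h_{t_0}:F_{t_0}\to F_{t_0}$ and $h_t:F_t\to F_t$ are conjugate through a diffeomorphism $F_{t_0}\cong F_t$.

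Finally, conjugate geometric monodromies induce conjugate linear maps $h_{*j}$ on the rational homology groups, so the characteristic polynomials $P_j(t)=\det(\id-t\,h_{*j})$ do not depend on the parameter $t$. Consequently $\zeta_{f_t}(t)=P_0(t)^{-1}P_1(t)\cdots P_{n-1}(t)^{(-1)^n}$ is locally constant in the parameter, and connectedness of $[0,1]$ together with the matching at the piecewise junctions forces $\zeta_{f_s}(t)=\zeta_{f_0}(t)$ for all $s\in[0,1]$.

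The principal technical input, and thus the main obstacle, is the L\^e--Saito statement in the piecewise analytic $\mu$-constant setting: one must know that no Milnor number splits off in a transverse direction, so that the $a_f$-condition holds and the Thom--Mather trivialization applies. Once this is granted, the rest is standard functoriality of characteristic polynomials under topological conjugation. An alternative, more combinatorial route (preferred when one wants to avoid L\^e--Saito) would be to build a simultaneous embedded resolution $\hat\Pi_s:X_s\to \mathcal U_0$ of the family whose exceptional configuration and multiplicities are constant in $s$, and then read off the constancy of $\zeta_{f_s}(t)$ directly from A'Campo's formula; the obstruction there is precisely the existence of such an equisingular resolution, which again rests on $\mu$-constancy.
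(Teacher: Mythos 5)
Your proposal goes through L\^e--Saito ($\mu$-constant $\Rightarrow$ Thom $a_f$) and then a Thom--Mather trivialization of the Milnor fibration. This is the natural geometric strategy, but it has a genuine gap, and the gap is exactly the point that the paper's proof is designed to circumvent. If the chain ``$a_f$-condition $\Rightarrow$ controlled vector field tangent to the Milnor sphere $\Rightarrow$ trivialization of the Milnor fibration'' worked as smoothly as you describe, it would establish L\^e--Ramanujam's theorem in \emph{all} dimensions including $n=3$; but the $n=3$ case is precisely the one L\^e--Ramanujam could not handle (they need the $h$-cobordism theorem on the link, which fails in that dimension), and it remains open. The concrete obstacle is in the sentence where you assert that the lifted vector field can be made tangent to both the level sets $f_t=\eta$ and the sphere $\|\mathbf z\|=\eps$ with a uniform $\eps$: the $a_f$-condition holds only along the parameter axis $\{0\}\times U$, and controlling the trivialization out to the boundary sphere for a fixed Milnor radius requires extra input (a uniform Milnor radius in the family plus matching of the link, which is what the $h$-cobordism step supplies for $n\ne 3$). ``Standard Thom--Mather arguments'' do not bridge this for $n=3$.

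The paper's proof takes a different and more elementary route for the problematic dimension. For $n\ne 3$ it simply invokes L\^e--Ramanujam. For $n=3$ it uses the Sebastiani--Thom suspension $g_s(x,y,z,w)=f_s(x,y,z)+w^m$: this is a $\mu$-constant family in \emph{four} variables (by multiplicativity of Milnor numbers under joins), so L\^e--Ramanujam applies to $g_s$ and gives constancy of $\operatorname{div}\tilde\zeta_{g_s}$. The join formula
\[
\operatorname{div}(\tilde\zeta_{g_s})=\operatorname{div}(\tilde\zeta_{f_s})\cdot(-\Lambda_m+1)
\]
then lets one recover $\operatorname{div}(\tilde\zeta_{f_s})$ purely algebraically by choosing $m$ coprime to all exponents appearing, and matching the two expansions term by term. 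This bypasses the $n=3$ difficulty entirely at the cost of only the algebraic lemma on divisors, whereas your argument would require resolving the open equisingularity problem in dimension $3$. You should either add this suspension step, or restrict your claim to $n\ne 3$ and then supply the missing case separately; a citation of L\^e--Saito alone does not discharge the $n=3$ case.
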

\begin{proof}For $n\ne 3$, the assertion follows from the result of  L\^e-Ramanujam \cite{Le-Ramanujam}.
For $n=3$, we consider the family $g_s(x,y,z,w)=f_s(x,y,z)+w^m$.
Consider the reduced zeta function
$\tilde\zeta(t):=\zeta(t)(1-t)$. For an isolated singularity case, $(-1)^n\tilde \zeta(t)$ is equal to the divisor of the characteristic polynomial of the monodromy automorphism $h_*:H_{n-1}(F)\to H_{n-1}(F)$ where $F$ is the Milnor fiber.
For a fixed $s$,  assume that
$\rm{div}(\tilde\zeta_{f_s})=\sum_{j=1}^{\ell} \mu_j\La_{e_j}$. 
By Join theorem (\cite{Thom-Seb,Sakamoto1}), we have the equality
$
\rm{div}(\tilde{\zeta}_{g_s})=\rm{div}(\tilde\zeta_{f_s})(-\La_m+1)$. Taking $m$ to be mutually prime for each $e_j$,
we have 
\[
\rm{div}(\tilde{\zeta}_{g_s})=-\sum_{j=1}^{k} \mu_j \La_{e_j m}+\sum_{j=1}^{k } \mu_j  \La_{e_j } \tag{J}
\]
Note that this divisor does not depend on the parameter $s$ by L\^e-Ramanujam (\cite{Le-Ramanujam}).
Assume that ${\rm{div}}(\tilde \zeta_{f_0})=\sum_{j=1}^{k_0} \nu_j d_j$.
 We assume $d_1>d_2>\dots>d_{k_0}$ and $e_1>\dots>e_{k}$ and $m>1$. 
By the above equality, we get the equality
\[
-\sum_{j=1}^{k_0} \nu_j \La_{d_j m}+\sum_{j=1}^{k_0} \nu_j \La_{d_j }=-\sum_{j=1}^{k} \mu_j\La_{e_jm}+\sum_{j=1}^{k} \mu_j  \La_{e_j}
\]
for any $m$ which is coprime to any $\{d_1,\dots, d_{k_0},e_1,\dots, e_{k_1}\}$. We see that
$d_1=e_1,\,\nu_1=\mu_1$. By an inductive argument, 
we conclude that $k_0=k$ and $d_j=e_j,\,\nu_j=\mu_j$ for $j=1,\dots, k$.
\end{proof}
\section{ $\mu$-Zariski pairs}
\subsection{Zariski pairs and weak Zariski pairs}
A pair of  projective curves $\{C,C'\}$  in $ \mathbb P^2$ is called {\em a  Zariski pair} if 
they have the same degree and there is a bijective correspondence  
$\phi:S(C)\to S(C')$
where $S(C)$ and $S(C')$ are the sets of the singular points of $C$ and $C'$ respectively
and the local  topological type of the singularities of $(C,p)$ and $(C',\phi(p))$ is the same for any $p\in S(C)$  and $\phi$ extends to a homeomorphism $\tilde \phi: N(C)\to N(C')$ of a tubular neighborhood $N(C)$ of $C$ to a tubular neighborhood $U(C')$ of $C'$ but this does not extend to a homeomorphism
of the ambient spaces
$(\mathbb P^2,C)$ and $(\mathbb P^2,C')$ for any $\tilde \phi$.

 We say $\{C,C'\}$ is {\em a weak Zariski pair} if 
 they have the same degree and there is a bijective correspondence $\phi: S(C)\to S(C')$ of the singular points of $C$ and $C'$ and the local  topological type of the singularities of $(C,p)$ and $(C',\phi(p))$ is the same for any $p\in S(C)$.
However there does not exist any homeomorphism 
 $\tilde \phi:N(C)\to N(C')$ of their tubular neighborhoods which extends $\phi$.
 This implies in particular that  the pairs $(\mathbb P^2,C)$ and $(\mathbb P^2,C')$ are not homeomorphic.
 \subsection{ Zariski pairs of hypersurfaces}
Assume that we have a pair of hypersurfaces $V(f)=\{f(\mathbf z)=0\}$ and $V(g)=\{g(\mathbf z)=0\}$ with isolated singularity at the origin. We say $\{V(f), V(g)\}$ is a {\em $\mu$-Zariski pair of hypersurface} (respectively {\em $\mu^*$-Zariski pair of hypersurfaces}) if they have  a same Milnor number $\mu$ (respectively a same $\mu^*$-invariant) and a same zeta function  of the Milnor fibrations
but they belong to different connected components of $\mu$-constant strata (resp. of $\mu^*$-constant strata). For the definition of $\mu$-constant strata  and $\mu^*$-constant strata, see \cite{EO}. They are defined as  semi-algebraic sets.

There is a canonical way to produce  possible $\mu$-Zariski pairs of surfaces ($n=3$).
Consider a  Zariski pair (respectively a weak Zariski pair) of projective curves  $C, C'$ of degree $d$ defined by  convenient homogeneous polynomials $f_d(x,y,z)$ and $g_d(x,y,z)$. We assume that the singular points of $C$ and $C'$ are Newton non-degenerate with respect to some local coordinates.
We   assume that $f$ and $g$ are non-degenerate on any face $\De$
of their Newton boundary with $\dim\De\le 1$.
Consider the affine  surfaces  defined by 
$f(x,y,z)=f_d(x,y,z)+z^{d+m}$ and $g(x,y,z)=g_d(x,y,z)+z^{d+m}$. 
Then $f$ and $g$ are almost non-degenerate functions with isolated singularities at the origin and their zeta functions  and Milnor numbers are same. We call $\{f, g\}$ {\em a Zariski pair } (resp.{\em a weak Zariski pair}) of  surfaces (\cite{Almost}). In \cite{Almost}, we studied a Zariski pair of  surface with $m=1$ whose links are diffeomorphic.
In our  paper  \cite{EO} in preparation, we have shown that the pair $\{f, g\}$ defined as above starting  from a Zariski pair $\{f_d, g_d\}$ 
of projective curves is a $\mu^*$-Zariski pair of  surfaces. Hereafter in this paper we consider  mainly a pair of  surfaces constructed from a weak Zariski pairs of curves.
\subsubsection{Examples of weak Zariski pairs of surfaces}

We consider   two weak Zariski pairs of quartics in $\mathbb P^2$.
Recall that a pair of projective curves $\{C_1,C_2\}$ of the same degree is a {\em weak Zariski pair} if there is a bijection  $\psi:S(C_1)\to S(C_2)$ of the respective  singular points 
so that the topological singularity type $(C_1,q)$ and $(C_2,\psi(q))$ are equivalent for any $q\in S(C_1)$ but this homeomorphism does not extend to a homeomorphism of  any tubular neighborhoods $N(C_1)$ and $N(C_2)$ of $C_1$ and $C_2$ respectively.
Examples of weak Zariski pairs which we consider in this paper are:
\begin{enumerate}
\item[( a1)] $(Q_1,Q_2)$ where $Q_1$ is an irreducible quartic with 3 nodes i.e., 3 $A_1$ singularities. $Q_2$ is union of a smooth cubic and a generic line.
\item[(a2)] $(Q_3,Q_4)$ where $Q_3$ is a union of a cubic with one node and a generic line
and   $Q_4$ is a union of two conics which intersects transversely at 4 points.
\end{enumerate}
Note that $Q_1,Q_2$ has $3 A_1$ singularities and $Q_3,Q_4$ have  $4 A_1$ singularities.
More explicitly, as $Q_1$ and $Q_2$, we can take (see \cite{Fermat}):
\[\begin{split}
Q_1:\quad &q_1(x,y,z)=0,\\
&q_1=(x^4+y^4+z^4)-124
(xyz^2+xy^2z+x^2yz)
 +6(x^2z^2+y^2z^2+x^2y^2)\\
&-4(x^3y+xy^3+x^3z+xz^3+y^3z+yz^3)\\
Q_2: \quad &q_2=0,\\
& q_2(x,y)=(x^3+y^3+z^3)(x+ay+bz),\,\, a,b\in \mathbb C^{*}:\text{generic}
\end{split}
\]
and 
 $Q_3=C_3\cup L$ and $C_3\pitchfork L$ where $C_3$ is a cubic with  one node.
For example, as  $C_3$ with one node at $(1,1,1)$ we can take 
\[
C_3:\, c^{ (1)}_3(x,y,z)=-(x+y+z)^3+27xyz
\]
and adding a generic line component, we get such a quartic $Q_3$. For example, 
we take  $q_3(x,y,z)=c^{ (1)}_3(x,y,z)\times(x+2y+3z)=0$. It has 4 nodes, three of which  come from the intersection of $C_3$ and the line component. As $q_4$ we can take for example,
$q_4=(x^2+y^2+z^2)(x^2+2y^2+3z^2)$.
As affine hypersurface,  each quartic $q_i(x,y,z)=0$ has three (or four) singular  lines through the origin
for $i=1,2$ (respectively for $i=3,4$). In the following, the precise forms of $q_1,\dots, q_4$ are not important. They make no problem for the discussion below.
\subsubsection{Isolation of the singularities}\label{weakZariski}
We consider the following  polynomials which is associated with $q_i,\,i=1,\dots,4$:
\[
f_i(x,y,z)=q_i(x,y,z)+z^{4+m},\quad i=1,\dots, 4
\]
where $m$ is a fixed positive integer. 
$f_1,\dots, f_4$ are   almost non-degenerate functions
and their Milnor numbers  are given by
$27+3m$ for $f_1,f_2$ and  $27+4m$ for $f_3,f_4$. 
Consider the corresponding hypersurface $V_i=\{f_i(x,y,z)=0\},\,i=1,\dots,4$.
For the calculation of the zeta function,  we follow the procedure of Theorem \ref{main1}. We first take an ordinary blowing up which is the simplest toric modification
$\hat \pi:X\to   \mathbb C^n$ with one   positive weight vector $P={}^t(1,1,1)$.
Take the toric chart
 $\Cone(e_1,e_2,P)$.  The exceptional divisor $E(P)=\hat E(P)\cap \tilde V_i$ contains 3   nodes $\rho_1,\rho_2,\rho_3$ for $V_1, V_2$ and  4 nodes
 $\rho_1,\dots, \rho_4$ for $V_3, V_4$.
 Taking the toric coordinate $(u_1,u_2,u_3)$ ,
 we have 
 \[
\hat f_i(\mathbf u):= \hat\pi^*f_i(u_1,u_2,u_3)=u_3^4\left\{
 q_{i}(u_1,u_2,1)+u_3^{m}.
 \right\}
 \] 
 Recall $(x, y, z)=(u_1u_3, u_2u_3, u_3)$. 
 Consider the behavior at a node $ \rho_\al$.
 Taking   admissible coordinates $(v_1,v_2,v_3)$ with $v_3=u_3$  in  a neighborhood of $\rho_\al$
 so that
   \begin{eqnarray}\label{local-equation}
  \hat q_{i,\al}(\mathbf v)&=&v_3^4(v_1^2+v_2^2),\\
 \hat f_{i,\al}(\mathbf v)&=&v_3^4(v_1^2+v_2^2+v_3^m).
 \end{eqnarray}
Thus the zeta function $\zeta_{\hat f_{i,\al}}(t)$  at $\rho_\al$ is given as
$\zeta_{\hat f_{i,\al}}(t)=(1-t^{m+4})^{-1}$ by Lemma \ref{Va}. The geometry at other singular point $\rho_\be$ is exactly  same with that of $\rho_\al$. Thus using Theorem \ref{main1}, combining the zeta functions at $\rho_\al$, we get
\begin{eqnarray}\label{zeta12}
\zeta_{f_{i} }(t)&=(1-t^4)^{-4} (1-t^{4+m})^{-3},\,i=1,2 \\
\zeta_{f_{i}}(t)&=(1-t^4)^{-3}(1-t^{4+m})^{-4},\,i=3,4.\label{zeta34} 
\end{eqnarray}
Note that the generic plane sections of $f_1,\dots, f_4$ have non-degenerate convenient  degree $4$ components. Therefore
the $\mu^*$-invariant of $f_i$ is given as
\begin{eqnarray}\label{mustar12}
\mu^*(f_i)=
\begin{cases}(27+3m,9,3),&\quad i=1, 2\\
(27+4m,9,3),&\quad i=3, 4.
\end{cases}
\end{eqnarray}
We will show that $\{f_1,f_2\}$ and $\{f_3,f_4\}$ are $\mu$-Zariski pairs in the following sections.
\subsection{Main theorem}We consider the isolation pairs $\{f_1,f_2\}$ and $\{f_3,f_4\}$ of
the weak Zariski pairs 
$\{q_1,q_2\}$ and $\{q_3,q_4\}$ introduced  in \S \ref{weakZariski}.
\subsubsection{ Non-existence of $\mu^*$-constant path}
First we assert
\begin{Lemma}\label{mu*-zariski}
There are no piecewise analytic $\mu^*$-constant  path from
 $f_1$ to $f_2$ (respectively  from $f_3$ to $f_4$). \end{Lemma}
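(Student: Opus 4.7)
The plan is to argue by contradiction, combining Teissier's topological triviality theorem for piecewise analytic $\mu^*$-constant families (recalled in the introduction) with a direct comparison of the minimal good resolutions of $(V(f_i),0)$. If such a $\mu^*$-constant family connected $f_1$ to $f_2$, Teissier's theorem would supply a diffeomorphism of link pairs $(S^5_\eps,K_{f_1})\cong(S^5_\eps,K_{f_2})$; in particular the $3$-manifolds $K_{f_1}$ and $K_{f_2}$ would be homeomorphic, and my goal is to rule this out by computing $H_1(K_{f_i};\mathbb Z)$.

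First I would record the shape of the resolution. Applying the toric blow-up $\hat\pi$ of \S\ref{weakZariski} with central weight $P={}^t(1,1,1)$, the strict transform $\widetilde V_i$ meets the exceptional $\mathbb P^2$ exactly in the projective curve $C_i$, and by \eqref{local-equation} it acquires a Brieskorn $A_{m-1}$-type singularity $v_1^2+v_2^2+v_3^m=0$ at every node of $C_i$. Resolving these by the standard chain of $m-1$ rational curves yields a good resolution $\hat V_i\to V(f_i)$ whose exceptional configuration is the normalisation of $C_i$ together with one such chain attached at each node. Then $K_{f_i}$ is the boundary of the resulting plumbing of disc bundles, and since Mumford's theorem forces the intersection form on the exceptional curves to be negative definite, the free rank of $H_1(K_{f_i};\mathbb Z)$ equals $2\sum_j g_j+b_1(\Gamma_i)$, where the $g_j$ are the genera of the components and $\Gamma_i$ is the dual graph.

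For the pair $\{f_1,f_2\}$ it then suffices to compute these two numbers. In $\hat V_1$ every component is rational because the irreducible three-nodal quartic $C_1$ has geometric genus zero, and the three nodes of $C_1$ each produce a self-loop at the single central vertex of $\Gamma_1$, giving free rank $0+3=3$. In $\hat V_2$ the smooth cubic $E\subset C_2$ contributes $2g(E)=2$ and the three transverse intersection nodes of $E$ with the line sit between two distinct central vertices of $\Gamma_2$, which produces $b_1(\Gamma_2)=2$ independent cycles; so the free rank is $2+2=4$. The ranks disagree, contradicting $K_{f_1}\cong K_{f_2}$.

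The pair $\{f_3,f_4\}$ is where I expect the main obstacle, since both $C_3$ and $C_4$ have only rational components and a direct count shows $b_1(\Gamma_3)=b_1(\Gamma_4)=3$, so the free rank argument cannot separate them. Here I would have to distinguish the torsion subgroups $H_1(K_{f_i})_{\mathrm{tors}}$, equivalently the linking forms of the plumbed $3$-manifolds. The dual graphs do differ in shape — the unique self-intersection node of the nodal cubic in $C_3$ produces a self-loop chain at the cubic vertex, whereas every chain in $\Gamma_4$ is a bipartite edge between the two conic vertices — and the plan is to convert this into a non-isomorphism of cokernels by computing the self-intersections of the strict transforms of the components of $C_i$ on $\widetilde V_i$, assembling the two plumbing intersection matrices, and comparing their Smith normal forms. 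Once such a discrepancy is found, the contradiction with Teissier's diffeomorphism again closes the argument.
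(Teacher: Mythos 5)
Your strategy (Teissier's theorem turns a $\mu^*$-constant path into a diffeomorphism of link pairs, then distinguish the links $K_{f_i}$ by invariants of their resolution graphs) is genuinely different from the paper's proof of Lemma \ref{mu*-zariski}, and for the pair $\{f_1,f_2\}$ it does work: your Betti-number count ($2g_{tot}+b_1(\Gamma)$ equal to $3$ versus $4$) is exactly what the paper later establishes as Theorem \ref{main2}, so this half of your argument is sound, modulo writing out the resolution carefully (note that the chains over the $A_1$ points in the paper are not the minimal $A_{m-1}$ chains when $m$ is odd, but this does not affect the genus/cycle count).

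The genuine gap is the pair $\{f_3,f_4\}$, and you have correctly located it but not closed it. There you only have a \emph{plan}: compute the two plumbing intersection matrices and hope that their cokernels (equivalently the torsion of $H_1$, or the linking forms) differ. You have not carried out this computation, and nothing you say guarantees it will succeed; two non-isomorphic negative-definite plumbing graphs can perfectly well have isomorphic $H_1$ including torsion. Indeed the paper itself never claims a homological distinction between $K_{f_3}$ and $K_{f_4}$: it only observes that the weighted dual graphs are non-isomorphic and invokes Neumann's plumbing calculus (Theorem 3.2 of \cite{Neumann}) to conclude the links are not diffeomorphic, which is a strictly stronger tool than $H_1$. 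As written, your proof therefore establishes the lemma only for $\{f_1,f_2\}$. The paper's own argument avoids the links entirely and is uniform in the two pairs: a $\mu^*$-constant family $h_s$ forces constant multiplicity $4$ and, via constancy of the zeta function (Lemma \ref{zeta-const}) and its multiplicity factor, constancy of the total Milnor number of the tangent-cone quartics $D_s=\{h_{s4}=0\}$; Lazzeri's non-splitting result \cite{Lazzeri} then keeps the singularities equal to $3A_1$ (resp. $4A_1$) along the family, Lê's equisingularity criterion \cite{Le} makes $(\mathbb P^2,D_s)$ topologically constant, and this contradicts the defining property of the \emph{weak Zariski pair} $\{Q_1,Q_2\}$ (resp. $\{Q_3,Q_4\}$). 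The key idea you are missing is precisely this use of the weak Zariski hypothesis on the family of tangent cones; to salvage your route you would either have to actually compute and compare the two intersection matrices (and be prepared for the torsion to coincide), or replace the homological invariant by Neumann's graph-manifold classification.
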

\begin{proof} 
 The assertion is proved in \cite{EO} for a pair constructed from Zariski pair and the proof for our case is similar. We   give a brief proof for the reader's convenience.
We prove the assertion simultaneously for two pairs. 
Suppose we have a  piecewise analytic family of functions $h_s(x,y,z),\,0\le s\le 1$ so that $h_0=f_1$ and $h_1=f_1$ (respectively $h_0=f_3$ and $h_1=f_3$) and $\mu^*$ invariants of $h_s$ are constant.
As it is $\mu^*$-constant family, the multiplicity of $h_t$ is 4 for any $s$. Let $h_s=h_{s4}+h_{s5}+\dots$ be the graduation by the degree.
If $h_{s_0,4}=0$  has an non-isolated singularity in $\mathbb P^2$ for some $s_0$,  the generic plane section $h_{s_0}\cap L$ has Milnor number greater than 9 as the tangent cone
is a quartic with  singularities. Here $L$ is a generic plane through the origin. This contradicts to the $\mu^*$-constancy. Thus $h_{s4}=0$ has only isolated singularities.
Secondly the family of quartics $h_{s4}(x,y,z)=0$  has the same total Milnor number. In fact, the zeta function of $h_s$, $\zeta_{h_s}(t)$ is constant for $s$ by Lemma \ref{zeta-const} and also its zeta multiplicity factor  is also constant.  This  is given by
$(1-t^4)^{-7+\mu_{tot}(s)}$. Here $\mu_{tot}(s)$ is the total Milnor number of the projective curve
$D_s:=\{h_{s4}=0\}\subset \mathbb P^2$ which is the sum of Milnor numbers at the singular points. Thus the total Milnor numbers of $h_t$ is constant.

We use the following well-known property of the family of curves.
\nl
{\em Bifurcation of singularities.}
Consider  a  continuous family of analytic function $f_t(x,y)$ of plane curves with isolated singularity at the origin.
Then there exists a positive number $\eps>$ so that for any $t\le \eps$, $\mu(f_t)\le \mu(f_0)$ for $|t|\le \eps$. It is also well-known that if the singularity of $f_0$ at the origin  bifurcate into some singularities for $f_t$, the sum of Milnor numbers on the same fiber $f_t=0$ is   less than $\mu(f_0)$ by Lazzeri \cite{Lazzeri}. See also \cite{Le2}.

 Thus combining the above  two observation, the singularities of $\{h_{s4}=0\}$
has  $3 A_1$ (respectively   $4 A_1$)  for any $s$ if $h_0=f_1$ (resp. if $h_0=f_3$). This implies that  the pair $(\mathbb P^2,D_s)$ is topologically isomorphic  to $(\mathbb P^2,D_0)$ by a result of L\^e \cite{Le}.
However this is a contradiction as $\{Q_1,Q_2\}$ (resp.$ \{Q_3,Q_4\}$) is a weak Zariski pair and the pair $(\mathbb P^2,Q_i),i=1,2$ (resp. the pair $(\mathbb P^2,Q_j),j=3,4$) are not homeomorphic.  Thus
 there are no such family of quartics  from $Q_1$ to $Q_2$ (resp. from $Q_3$ to $Q_4$).  This proves Lemma \ref{mu*-zariski}.
\end{proof}
\subsubsection{$\mu$-Zariski pair}
Now we state a stronger  result. 
\begin{Theorem}\label{mu-zariski}
The pair $\{f_1,f_2\}$ and $\{f_3,f_4\}$ are $\mu$-Zariski pairs of hypersurfaces. Namely they belong to  different connected components of $\mu$-constant strata.
\end{Theorem}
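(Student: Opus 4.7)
The plan is to reduce the assertion to Lemma~\ref{mu*-zariski} by showing that any $\mu$-constant piecewise analytic family joining $f_1$ to $f_2$ must in fact be $\mu^*$-constant; the argument for $\{f_3,f_4\}$ runs identically with $3\leftrightarrow 4$. Since the $\mu$-constant strata are semi-algebraic, two elements in the same connected component are joined by a piecewise analytic path, so I suppose we have such a family $h_s(x,y,z)$, $0\le s\le 1$, with $h_0=f_1$, $h_1=f_2$, and $\mu(h_s)\equiv 27+3m$, aiming for a contradiction. By Lemma~\ref{zeta-const} the zeta function is constant,
\[
\zeta_{h_s}(t)\equiv (1-t^4)^{-4}(1-t^{4+m})^{-3},
\]
so $m_\zeta(h_s)=4$ and Proposition~\ref{multiplicity estimation} forces $m(h_s)\le 4$.

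The key technical step is to upgrade this to $m(h_s)\equiv 4$. If $m(h_{s_0})=r<4$ at some interior $s_0$, the ordinary blowup at the origin produces an exceptional $\mathbb{P}^2$ whose A'Campo contribution is $(1-t^r)^{-\chi(\mathbb{P}^2\setminus D_{s_0}^{(r)})}$, where $D_{s_0}^{(r)}$ is the degree-$r$ tangent cone of $h_{s_0}$. Since the constant zeta function has no $(1-t^r)$-factor with $r<4$, we must have $\chi(D_{s_0}^{(r)})=3$. I then enumerate the short list of plane curves of degree $r\in\{1,2,3\}$ with Euler characteristic $3$ (three lines in general position for $r=3$; two distinct lines for $r=2$; impossible for $r=1$), and for each configuration use a local A'Campo/Varchenko computation at the corner and edge blowups resolving the strict transform, together with the higher-order Taylor data of $h_{s_0}$, to verify that none of these configurations can reproduce the precise factor $(1-t^{4+m})^{-3}$ of the constant zeta function; the multiplicities of the secondary exceptional divisors are incompatible with the required cyclotomic exponent. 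Thus no multiplicity drop can occur.

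Once $m(h_s)\equiv 4$ is established, $\mu^{(1)}(h_s)=3$ is automatic. The tangent cone $D_s=\{h_{s,4}=0\}$ is a quartic plane curve and the constant factor $(1-t^4)^{-4}$ combined with A'Campo at the first blowup pins $\chi(D_s)=-1$. A direct enumeration of plane quartics with $\chi=-1$, combined with the constraint that the singular contributions produce exactly $(1-t^{4+m})^{-3}$, shows that $D_s$ has exactly three nodes (no cusps or multiple components), so its generic plane section is an ordinary $4$-fold point and $\mu^{(2)}(h_s)=(4-1)^2=9$. The family is therefore $\mu^*$-constant, contradicting Lemma~\ref{mu*-zariski}. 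The main obstacle is the multiplicity-drop exclusion: it reduces to a finite combinatorial case check, but carrying it out requires careful local resolution data at each low-degree tangent cone with $\chi=3$; once this step is completed, the remaining upgrade from $\mu$-constant to $\mu^*$-constant is immediate.
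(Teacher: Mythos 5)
Your proposal takes essentially the same route as the paper: constancy of the zeta function from Lemma~\ref{zeta-const}, exclusion of multiplicity drops, reduction to equisingularity of the degree-$4$ tangent cone, and the final contradiction with Lemma~\ref{mu*-zariski}. One point of caution: the enumeration of low-degree tangent cones with $\chi = 3$ is incomplete as stated. For $r=3$ the list must also include the non-reduced cone $x^2y$ (reduced locus two lines, $\chi = 3$) and the conic plus a tangent line ($\chi = 2+2-1 = 3$); the paper treats these as Subcases 2-1-c and 2-2-2. Similarly for $r=4$ the condition $\chi(D_s)=-1$ does not by itself force three nodes (e.g.\ one node and one cusp also gives $\chi=-1$), so the constraint from the $(1-t^{4+m})^{-3}$ factor is genuinely load-bearing; the paper instead invokes constancy of the total Milnor number and Lazzeri's non-bifurcation theorem, which is tidier. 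Finally, the paper handles multiplicity $2$ via the generalized Morse lemma and the join theorem rather than the tangent-cone Euler-characteristic count, avoiding the need to analyze the two-lines tangent cone further; your version would need an additional argument there. None of this changes the overall strategy, which coincides with the paper's.
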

\section{Proof of Theorem \ref{mu-zariski}}
The proof occupies the rest of this section. 
 Assume that we have a $\mu$-constant  piecewise analytic family $h_s(x,y,z),\,s\le s\le 1$ so that $h_0=f_1,h_1=f_2$ (respectively $h_0=f_3, h_1=f_4$). We take an arbitrary $0<s<1$. 
 We will show that zeta function can not be the same as any  of $f_1$ or $f_3$ if the multiplicity of $h_s$ is smaller than 4. This part takes the most part of the proof.
 By Lemma \ref{zeta-const}, the zeta-function of $h_s$ is the same as that of $f_1$ or $f_3$.
We prove the assertion by contradiction.

 The argument is to show that the zeta multiplicity factor of $h_s$ can not be  as $(1-t^4)^{-4}$ for $f_1$ or  $(1-t^4)^{-3}$ for $f_3$ if multiplicity of $h_s$ is less than 4. 
 (There is one exceptional case with $\zeta$ multiplicity factor is 
 $(1-t^4)^{-3}$  and the multiplicity is $3$. See Lemma \ref{exceptional}.)
 If  the multiplicity of $h_s$ is 4,   the singularities of $h_{s4}=0$ must be $3A_1$ (resp. 4 $A_1$).
 We first show that the multiplicity of $h_s$ can not be 2 or 3
 in \S \ref{Case1} and \S\ref{Case2}.

\vspace{.3cm}
\noindent 
\subsection{Case 1} \label{Case1} {\em The multiplicity of $h_s$ is  2.} Assume $h_s$ has the multiplicity 2 for some $s$.
Fixing $s$ and apply the generalized Morse Lemma (see for example \cite{Arnold}). Choosing a suitable analytic coordinate $(w_1,w_2,w_3)$,
we can write (a) $h_s(\mathbf w)=w_1^2+w_2^2+w_3^\nu,\,\nu\ge 3$ for corank 1 or
(b) $h_s(\mathbf w)=w_1^2+j(\mathbf w)$ for corank 2 where the multiplicity of $j$ is greater than 2. We show that this is impossible, under the assumption that the zeta function is given as (\ref{zeta12}) or (\ref{zeta34}).
 
For  the case (a), it is clearly impossible, as $\text{div}(\tilde\zeta_{h_s})=\La_\nu-1$. 
  Assume the case (b). Let $\Xi_j$ be the  divisor of the reduced zeta function of $j(\mathbf w)$.  By the join theorem (\cite{Thom-Seb, Sakamoto1}),
we need to have
\begin{eqnarray}
{\rm div}(\tilde\zeta_{h_s})&=&(-\La_2+1)\Xi_j,\\
(-\La_2+1)\Xi_j&=&{\rm div}(\tilde\zeta_{h_0})=
\begin{cases}
-4\La_4-3\La_{4+m}+1,\,\,&\text{for}\,f_1,f_2\\
-3\La_4-4\La_{4+m}+1,\,\, &\text{for}\,f_3,f_4.
\end{cases}
\end{eqnarray}
Put $\Xi_j=\sum_{i=1}^s \nu_i\La_{d_i}$ with $d_1<d_2<\dots<d_s$.
First, to obtain $1$ in $(-\La_2+1)\Xi_j$, we must have $d_1=1$ and $\nu_1=1$.
 If $d_2>2$,
$(-\La_2+1)\Xi_j$ gets  $-\La_2$ in this summation. This is a contradiction to the above equality. So we need to have $d_2=2$ and $\nu_2=-1$. 
This implies by Proposition \ref{multiplicity estimation}, the multiplicity of ${j}$ is 2 which is also a contradiction to the assumption.

\subsection{Case 2}\label{Case2}{\em Multiplicity of $h_s$ is 3.}
Now we show that the multiplicity of $h_s$ can not be $3$.  
Assume that $h_s$ has multiplicity 3 for some $s$ and let $h_s=h_{s3}+h_{s4}+\dots$ be the graduation by the degree. We consider the cubic curve
$C=\{h_{s3}=0\}\subset \mathbb P^2$.  In the following, $s$ is fixed as above.
\subsubsection{Strategy of the argument}
Our argument proceeds as follows.
First we take a suitable coordinates, say $(x, y, z)$, and consider the Newton boundary  $\Ga(h_s)$ of $h_s$ with respect to this coordinates. As $h_s=0$ 
has an isolated singularity at the origin, we may assume that $h_s$ has a  convenient  Newton boundary by adding monomials $x^N, y^N, z^N$ where $N$ is a sufficiently large integer. Then we consider the dual Newton diagram $\Ga^*(h_s)$ and take an admissible regular simplicial subdivision  $\Si^*$ and consider the associated toric modification
$\hat \pi:X\to \mathbb C^3$. By the convenience,  we may assume that the vertices of $\Si^*$
are   positive except the canonical ones $\{e_1,\dots, e_n\}$ and $\hat \pi$ is a small toric modification. Let $\hat E(P),\,P\in \mathcal V^+$ be the compact exceptional divisors of $\hat \pi$. 
The multiplicity of $\hat h_s:=\hat \pi^*h_s$ along $\hat E(P)$ is $d(P,h_s)$. Let $\tilde V_s$ be the strict transform of $V(h_s)$ into $X$.
If $\De(P)$ is a degenerate face of $\Ga(h_s)$, $\tilde V_s$ can have singularities on $ E(P)$. 
To get a good resolution of $h_s$, we need  further blowing ups  over   singular points of $\tilde V_s$ (\cite{Hironaka}) and let
$\omega:Y\to X$ is the composition of these blowing ups so that the composition 
\[\Pi=\hat\pi\circ\omega: Y\mapright{\omega} X\mapright{\hat\pi}\mathbb C^3\]
 is a good resolution of $h_s$ and let $D_1,\dots, D_\ell$ be the exceptional divisors of $\omega$
 and let $m_j$ be the multiplicity of $\Pi^*\hat h_s$ along $D_j$. 
 Note that  
 $m_j\ge 5$ by Proposition \ref{multiplicity estimation}, if the multiplicity of the exceptional divisor $\hat E(P)$ of the first modification $\hat\pi:X\to\mathbb C^3$ with
 $\omega(D_j)\subset \hat E(P)$ is at least 4,  which implies the multiplicity of $\hat\pi^*f$ is greater than or equal to $5$ at a singular point of $\tilde V$.
 Let $\tilde V_Y$ be the strict transform of $\tilde V$ into $Y$ and 
 $D_j'= D_j\setminus \left(\tilde V_Y\cup_{P\in \mathcal V^+}\hat E(P)_Y\cup_{k\ne j}D_j\right)$.
 We may assume that exceptional divisors are all compact so that its image  of the exceptional divisors by $\Pi$  are   over the origin.   Then the exceptional divisors of $\Pi=\hat\pi\circ\omega$
are $\{\hat E(P) _Y,\, P\in \mathcal V^+\}\cup\{D_1,\dots, D_\ell\}$. 
Here $\hat E(P)_Y$ is the pull back of $\hat E(P)\subset X$ to $Y$.
The contribution of the divisor $\hat E(P)_Y$ in the A'Campo formula is $(1-t^{d(P)})^{-\chi(\hat E(P)_Y')}$
where  $\hat E(P)_Y'=\hat E(P)_Y\setminus  \left({\tilde V_Y}\cup_{Q\ne P}\hat {E}(Q)_Y\cup _{i=1}^kD_k\right)$.
Let $\hat E(P)'=\hat E(P)\setminus \left(\tilde V\cup_{Q\ne P}\hat E(Q)\subset \hat E(Q)\right)$. 
As $E(P)'$ is smooth and it  does not contain any point of the center of the second blowing-up $\omega$,
we have a canonical diffeomorphism $\omega:\hat  E(P)_Y'\cong\hat E(P)'$.
 Thus
\begin{Proposition}
 We have the equality $\chi({\hat  E}(P)_Y')=\chi({\hat  E}(P)')$.
 
 \end{Proposition}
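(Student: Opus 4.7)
The plan is to exhibit a biholomorphism between $\hat E(P)_Y'$ and $\hat E(P)'$ induced by $\omega$, from which the equality of Euler characteristics is immediate.

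First I would pin down where $\omega\colon Y\to X$ can fail to be a local isomorphism. By construction $\omega$ is a sequence of blowing-ups performed in order to resolve the singularities of the strict transform $\tilde V\subset X$, so every center of $\omega$ lies in the singular locus of $\tilde V$. In particular the image $\omega(D_k)$ of each new exceptional divisor is contained in $\tilde V$, and $\omega$ is a local biholomorphism over $X\setminus\tilde V$.

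Next I would use the definition
\[
\hat E(P)' = \hat E(P)\setminus\Bigl(\tilde V\cup\bigcup_{Q\ne P}\hat E(Q)\Bigr)
\]
to observe that $\hat E(P)'$ is disjoint from $\tilde V$, and therefore disjoint from every center of $\omega$. Hence the restriction $\omega\colon \omega^{-1}(\hat E(P)')\to \hat E(P)'$ is a biholomorphism. It remains to identify $\omega^{-1}(\hat E(P)')$ with $\hat E(P)_Y'$. For a point $y\in\omega^{-1}(\hat E(P)')$ one checks that $y$ cannot lie on $\tilde V_Y$, on $\hat E(Q)_Y$ for $Q\ne P$, or on any $D_k$, since in each case $\omega(y)$ would land in $\tilde V$, in $\hat E(Q)$, or in a center of $\omega$ (again inside $\tilde V$), contradicting $\omega(y)\in\hat E(P)'$. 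Conversely, for $y\in\hat E(P)_Y'$ the exclusions $y\notin \tilde V_Y$, $y\notin\hat E(Q)_Y$ ($Q\ne P$), and $y\notin D_k$ force $\omega(y)$ to avoid $\tilde V$, the other $\hat E(Q)$, and the centers of $\omega$; combined with $\omega(y)\in\hat E(P)$, this gives $\omega(y)\in\hat E(P)'$.

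Combining the two inclusions yields the set-theoretic equality $\hat E(P)_Y'=\omega^{-1}(\hat E(P)')$, and the induced map is a biholomorphism, so the Euler characteristics coincide. There is no real obstacle here; the only point requiring care is the bookkeeping that the $D_k$-exclusion in the definition of $\hat E(P)_Y'$ is precisely what ensures $\omega$ is a local isomorphism at each remaining point, so that the distinction between total and strict transform above $\hat E(P)'$ becomes invisible.
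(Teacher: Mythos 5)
Your proof is correct and follows essentially the same line as the paper: the paper's proof consists of the single observation that $\hat E(P)'$ avoids the centers of the second modification $\omega$ (which lie in $\tilde V$), so $\omega$ restricts to a diffeomorphism $\hat E(P)_Y'\cong\hat E(P)'$. Your write-up supplies the set-theoretic bookkeeping that the paper leaves implicit, but the key idea and the route are the same.
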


 
 Now combining  A'Campo formula and the argument of Varchenko formula and Proposition \ref{vanishing}, we have 
 \begin{Lemma}\label{modified-Varchenko}
  The zeta function of $f$ is given as 
 \[\prod_{P\in \mathcal V^+}(1-t^{d(P)})^{-\chi(\hat E(P)')}\times \prod_{j}(1-t^{m_j})^{-\chi(D_j')}.
 \]
 The first factor can be written using toric stratification as 
 \[\begin{split}
 \prod_{P\in \mathcal V^+}(1-t^{d(P)})^{-\chi(\hat E(P)')}&=\prod_{I}\zeta_I(t)\\
 \text{where}\,\qquad\zeta_I(t)&=\prod_{Q\in \mathcal P_I}(1-t^{d(Q)})^{-\chi(\hat E(Q)')}
 \end{split}
 \]
 \end{Lemma}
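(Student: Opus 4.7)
My plan is to derive the displayed formula in two stages: first the A'Campo decomposition using the composite resolution $\Pi=\hat\pi\circ\omega:Y\to\mathbb C^3$, and then a toric-combinatorial rewriting of the first factor.

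First I would apply A'Campo's formula directly to $\Pi$. Since $\Pi$ is a good resolution of $f$, its total exceptional divisor is $\bigl(\bigcup_{P\in\mathcal V^+}\hat E(P)_Y\bigr)\cup\bigl(\bigcup_{j=1}^{\ell}D_j\bigr)$, with multiplicities of $\Pi^*f$ equal to $d(P)$ along $\hat E(P)_Y$ (since $\omega$ leaves the pull-back multiplicities of $\hat\pi^*f$ unchanged along the strict transforms of the original toric exceptional divisors) and $m_j$ along $D_j$. A'Campo's formula therefore yields
\[
\zeta(t)\;=\;\prod_{P\in\mathcal V^+}(1-t^{d(P)})^{-\chi(\hat E(P)_Y')}\;\times\;\prod_{j=1}^{\ell}(1-t^{m_j})^{-\chi(D_j')},
\]
where all exceptional divisors are compact so that $E''=E'$ in the A'Campo formula. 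Next I invoke the Proposition immediately preceding the lemma, which states $\chi(\hat E(P)_Y')=\chi(\hat E(P)')$; this is valid because $\omega$ restricts to a biholomorphism over the Zariski-open locus $\hat E(P)'\subset X$, whose points lie outside the centers of the further blow-ups making up $\omega$. Substituting gives the first displayed formula of the lemma.

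For the second part, I would use the toric stratification of the open locus $\hat E(P)'$. For each $P\in\mathcal V^+$ one has the disjoint decomposition $\hat E(P)'=\amalg_I\hat E(P)_I'$, indexed by subsets $I\subset\{1,\dots,n\}$ such that $\{P,\,e_i:i\notin I\}$ spans a cone of $\Si^*$. By additivity of the Euler characteristic,
\[
\chi(\hat E(P)')=\sum_I\chi(\hat E(P)_I'),\qquad\text{hence}\qquad(1-t^{d(P)})^{-\chi(\hat E(P)')}=\prod_I(1-t^{d(P)})^{-\chi(\hat E(P)_I')}.
\]
Swapping the order of the two products over $P$ and $I$, I then re-index the inner product using the natural bijection (recorded in the Remark after Varchenko's formula) between those $P\in\mathcal V^+$ for which $\hat E(P)_I'$ is non-empty and the elements $Q\in\mathcal P_I$: to $Q$ corresponds the unique $P$ such that $\Cone(P,e_{I^c})$ lies in $\Si^*$ and $\Delta(P;f)\cap\mathbb R^I=\Delta(Q;f^I)$, together with the equality $d(P)=d(Q;f^I)$. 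Under this identification the factor $(1-t^{d(P)})^{-\chi(\hat E(P)_I')}$ is precisely the factor indexed by $Q$ in $\zeta_I(t)$, which produces $\prod_I\zeta_I(t)$.

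The only genuine step requiring care is the matching in the second paragraph between the A'Campo-style contribution $-\chi(\hat E(P)_I')$ and the Varchenko-style $-\chi(\hat E(Q)')$: I would verify it by writing both quantities in the toric chart $\mathbb C^n_\sigma$ associated with $\sigma=\Cone(P,e_{I^c})$ and observing that in this chart the stratum $\hat E(P)_I'$ is cut out by the same equations in the subspace $\mathbb R^I$ that define the Varchenko stratum for $Q$, so that their Euler characteristics agree (and in particular vanish when $\dim\Delta(Q;f^I)<|I|$, by Proposition \ref{vanishing}). Once this dictionary is set up, everything else is a formal manipulation of products.
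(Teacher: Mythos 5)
Your proposal is correct and follows essentially the same route as the paper. The paper presents Lemma~\ref{modified-Varchenko} without a formal proof environment, deducing it in the surrounding text from A'Campo's formula applied to the composite resolution $\Pi=\hat\pi\circ\omega$, the preceding Proposition giving $\chi(\hat E(P)_Y')=\chi(\hat E(P)')$, and the toric stratification $(\ref{Acampo-degenerate})$ combined with the Remark on the bijection between $\mathcal P_I$ and the relevant vertices of $\mathcal V^+$; you have spelled out exactly these three ingredients in the same order, and your closing paragraph invoking Proposition~\ref{vanishing} to handle the degenerate-dimension strata correctly fills the only gap the paper leaves implicit.
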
 
 The set $\mathcal P_I$ is the set of  weight vectors which correspond to the maximal dimensional faces of $\Ga(f^I)$. If $f^I_P$ is a degenerate face, $\chi(\hat E(P))$ can not be expressed combinatorially as in the formula (\ref{Va2}).
 The following Lemma is useful to prove Theorem \ref{mu-zariski}.
 Using Lemma \ref{vanishing}, we have:
 \begin{Lemma}\label{essential vertex}
  Let $P$ be a   positive vertex of $\Si^*$.
 If $\hat E(P)'$ has non-zero Euler characteristic, there are three possibilities.
\begin{enumerate}
\item $\dim\, \De(P)=2$, or
\item $\dim\,\De(P)=1$ and $P$ is adjacent to one of  $e_1, e_2, e_3$, or
\item $\dim\, \De(P)= 0$ and $P$ is adjacent to two of $e_1, e_2, e_3$.
\end{enumerate}
 \end{Lemma}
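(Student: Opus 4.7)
The plan is to argue by contrapositive via the toric decomposition (\ref{Acampo-degenerate}) of $\hat E(P)'$ combined with Proposition \ref{vanishing}. Assuming $P\in\mathcal V^+$ satisfies none of (1)--(3), I will show $\chi(\hat E(P)')=0$. First I would recall that (\ref{Acampo-degenerate}) expresses $\chi(\hat E(P)')$ as a sum indexed by those $I\subseteq\{1,2,3\}$ for which $\{P\}\cup\{e_j:j\in I^c\}$ spans a simplicial cone of $\Sigma^*$, each term built from $\chi(E(P)_I^*)$ on the torus stratum $\hat E(P)_I^*\cong\mathbb C^{*(|I|-1)}$.

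Next I would identify, in the appropriate toric chart, the variety $E(P)_I^*$ as the zero locus in $\hat E(P)_I^*$ of the polynomial obtained from the face function $f_{\Delta(P)}$ by restricting to $\{z_j=0:j\in I^c\}$ and then dehomogenizing along the weight $P$; its Newton polytope is essentially the restricted face $\Delta(P;f^I):=\Delta(P)\cap\{\nu_j=0:j\in I^c\}$. Applying Proposition \ref{vanishing} in the ambient torus $\mathbb C^{*(|I|-1)}$, I conclude that $\chi(E(P)_I^*)=0$ whenever $\dim\Delta(P;f^I)<|I|-1$.

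The case analysis is then straightforward. The negation of (1)--(3) leads to one of two subcases. In the subcase $\dim\Delta(P)=1$ with $P$ adjacent to no $e_j$, the adjacency requirement eliminates every $I$ with $|I|\le 2$, so only the top stratum $I=\{1,2,3\}$ survives; but there Proposition \ref{vanishing} forces $\chi(E(P)^*)=0$ since $\dim\Delta(P)=1<2$. In the subcase $\dim\Delta(P)=0$ with $P$ adjacent to at most one $e_j$, the $|I|=3$ term vanishes for the same dimensional reason, no $|I|=1$ stratum exists (such a stratum requires $P$ adjacent to two $e_j$'s), and the at most one surviving $|I|=2$ stratum has $\dim\Delta(P;f^I)\le\dim\Delta(P)=0<1$, so Proposition \ref{vanishing} again kills its contribution. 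Collecting, $\chi(\hat E(P)')=0$, completing the contrapositive.

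The main obstacle is the careful identification in each chart of the defining polynomial of $E(P)_I^*$ and of its Newton polytope in terms of $\Delta(P;f^I)$, so that Proposition \ref{vanishing} can be legitimately invoked in the $(|I|-1)$-dimensional ambient torus for each stratum separately; this mirrors the setup behind the Varchenko formula (\ref{Va2}) but must be made explicit for every $|I|$. Once this identification is in place, the adjacency and dimension bookkeeping encoded by $\Sigma^*$ is essentially combinatorial.
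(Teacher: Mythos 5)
Your proposal is correct and follows essentially the same route as the paper: argue the contrapositive using the toric stratification (\ref{Acampo-degenerate}), note that non-adjacency to the $e_j$'s removes the lower-dimensional strata, and apply Proposition \ref{vanishing} to the remaining strata because the relevant Newton polytopes (the restricted faces of $\De(P)$) have dimension strictly smaller than that of the ambient torus. The paper writes out only the case $\dim\De(P)=1$ with no adjacency and calls the $\dim\De(P)=0$ case ``similar,'' so your explicit treatment of both subcases is, if anything, slightly more complete.
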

 \begin{proof}  Assume that $\dim\,\De(P)=1$. Take a toric coordinate chart  $\si=\Cone(P,P_2,P_3)$. If $P$ is not adjacent to any of $e_1,e_2,e_3$,
 $\hat E(P)'=\hat E(P;\si)^*\setminus E(P;\si)^*$ for any toric chart $\si=\Cone (P,P_2,P_3)$
 where 
 \[\begin{split}
 \hat E(P;\si)^*&:=\{\mathbf u_\si\in \mathbb C_\si^3\,|\, u_{\si 1}=0,\, u_{\si 2},u_{\si 3}\ne 0\}\\
 E(P;\si)^*&:=\{(0,u_{\si 2},u_{\si 3})\,|\, g(u_{\si 2},u_{\si 3})=0\}
 \end{split}
 \]
 and   $g$ is the defining polynomial of $E(P)$ in $\hat E(P)=\{u_{\si 1}=0\}$. By the assumption, the Newton polygon of $g$ is 1-dimensional. Thus by Lemma \ref{vanishing}, $\chi(\hat E(P)')=-\chi(E(P;\si)^*)=0$. If $P$ is adjacent to $e_1$,  $\De(P)\subset \mathbb R^{\{2,3\}}$.
 The proof of assertion (3) is similar.  In this case,  $\hat E(\si)^*$ is a point (=0-dimensional torus) for $\si$ which is generated by $P$ and two of $e_1,\dots, e_3$. For example, if $e_1,e_2$ is adjacent to $P$, $h_{sP}(x,y,z)=c\,z^a$ for some $a>0$ and $c\in \mathbb C^*$ and $P$ take the form $P={}^t(a,b,1)$,\, $a,b>0$.
 In this case, this vertex contributes the zeta function by $(1-t^a)^{-1}$.
 \end{proof}
\subsubsection{Cancellation of Case 2}
Now we are ready to show the impossibility of the multiplicity $m(h_s)=3$.
We divide the situation by the geometry of the cubic curve $C_3:=\{h_{s3}=0\}\subset \mathbb P^2$.
For simplicity, we write hereafter $h:=h_s$, $h_3=h_{s3}$  etc.

\noindent
We divide the case 2 in  three subcases.
\indent
\begin{enumerate}
\item [ 2-1] $C_3:\,h_3=0$ a union of 3 lines. 
\item[  2-2] $h_3=0$ is a union of conic and a line.
\item [  2-3]$h_3=0$ is an irreducible cubic.
\end{enumerate}
First we consider Case 2-1.\nl
{\bf Case 2-1}. $C_3:\,h_3=0$ a union of 3 lines. 

 We further divide this case into four cases depending the geometry of the lines $h_3=0$: \newline
(a) $C_3$ is a union of three lines which  are generic in $\mathbb P^2$,  or\nl
(b) $C_3$  is a union of  three lines  which are intersecting at one point in $\mathbb P^2$.
\nl
(c) $C_3$ is union of two lines where one line has multiplicity 2.
\nl
(d) $C_3$ is one  line with multiplicity 3.
\nl\indent
 { Subcase 2-1-a.} We first consider the case that  $h_3=0$ is three generic lines. Taking a new linear coordinate if necessary
and putting this coordinates as $(x,y,z)$, we may assume 
 that $h_3(x,y,z)=xyz$.
 Put $A=(1,1,1)\in \Ga(h)$.
Let $P={}^t(a_1,a_2,a_3)$ be a   positive vertex of $\Si^*$ with $d(P)\le 4$. If $d(P)=3$, then $P={}^t(1,1,1)$ and 
 $\De(P)=\{A\}$.
This vertex does not give any factor in zeta function and  we do not need to  consider this case. Suppose $d(P)=4$. Then  $\De(P)$ contains only degree 4 monomials and possibly $xyz$. 
Here we used the trivial inequality
$\deg\, x^ay^bz^c\ge a+b+c$. 
If $\hat E(P)'$ has non-zero Euler characteristic, the possibility is  (a) $\dim\, \De(P)=2$, or (b)
$\dim\,\De(P)\ge1$ and $P$ is adjacent to one of $e_1, e_2, e_3$,or
(c) $\dim\, \De(P)\ge 0$ and $P$ is adjacent to two of $e_1, e_2, e_3$.
For (a) or (b), the possible weights are ${}^t(2,1,1), {}^t(1,2,1), {}^t(1,1,2)$.
Thus we may assume, for example, that $P={}^t(1,1,2)$ and  any degree $4$ monomial except $xyz$ must be a monomial $x^i y^{4-i}$ of degree $4$ in 
$x,y$. Thus we assume that 
 $I=\{1,2\}$ and suppose $h^I(x, y)$ 
  have  1-dimensional support.
We first assume that $\si=\Cone\,(P,e_2,e_3)$   is a simplex in $\Si^*$, assuming $x^4$ is in $h_4^I$. (Note $\De(P)\cap\De(E_2)\cap\De(e_3)=\{(4,0,0)\}$ and we can consider $\si\in \Si^*$.)
As a unimodular matrix, $\si$ takes the the form:
\[
\si=\left(
\begin{matrix}
1&0&0\\1&1&0\\2&0&1
\end{matrix}
\right).
\]
Then $\hat\pi_\si(\mathbf u_\si)=
(u_{\si 1},u_{\si 1}u_{\si 2},u_{\si 1}^2u_{\si 3})$ and 
\[\begin{split}
&\hat h(\mathbf u_\si)\equiv\hat{h_P}(\mathbf u_\si)
\,\,\modulo (u_{\si 1}^5),\\
&\hat h_P(\mathbf u_\si)=u_{\si 1}^4\left (h_4^{I}(1,u_{\si 2})+u_{\si 2}u_{\si 3}\right). 
\end{split}
\]
Let $\nu_I$ be the number of non-zero distinct roots of $h^{I}(1,u_{\si 2})=0$
and let $\de_I$ be the number of monomials of  $\{x^4,y^4\}$ in $h^I$. Then
\[
E(P)^*=\{(u_{\si 2},u_{\si 3})\,|\, h_4^{I}(1,u_{\si 2})+u_{\si 2}u_{\si 3}=0,\, u_{\si 2}, u_{\si 3}\ne 0\}
\]
and it is easy to see that $E(P)^*$ is homeomorphic to $\mathbb C^*\setminus \{\nu_I\,\text{points}\}$ by the projection $(u_{\si 2},u_{\si 3})\mapsto u_{\si 2}$.
Thus $\chi(E(P)^*)=-\nu_I$. On the other hand,
$E(P)_I^*=\{u_{\si 2}\in \mathbb C^*|h_4^{I}(1,u_{\si 2})=0\}$ and $\chi(E(P)_I^*)=\nu_I$. Thus those two terms are cancelled out.
Thus the contribution of the stratum  $E(P)^*$ and $E(P)_I^*$ to the zeta multiplicity factor is 
$(1-t^4)^{\de_I}$ where $\de_I$ is 
 the number of monomials in  $\{x^4, y^4\}$ which are in $h_4^{I}$
 and $\de_I\in \{0,1,2\}$.
If $y^4$ appears in $h_4^I$ and $x^4$ does not appear in $h_4^I$, we do the same argument by $\si'=\Cone(P,e_1,e_3)$.
If $x^4$ and $y^4$ are not in $h_4^I$ and assume that $h_4^I(x,y)=y^\al j_{4-\al}(x,y),\,1\le\al\le 2$ where $j_{4-\al}$ is a polynomial of degree $4-\al$ with $j_{4-\al}(x,0):=c\ne 0$.
Take a vector $Q={}^t(a-1,a,b)$ with $a$ sufficiently large and $b\gg a$.
We can see $h_Q=c y^{\al} x^{4-\al}$ and $\De(P)\supset\De(e_3)\supset \De(Q)$. (Recall $\Cone\,(P,Q,R)$ is an admissible cone if $\De(P)\cap\De(Q)\cap \De(R)\ne \emptyset$.)
This means 
\[
\tau=\Cone(P,Q,e_3)\,\iff\,\tau=
\left(
\begin{matrix} 1&a-1&0\\ 1&a&0\\ 2&b&1
\end{matrix}
\right)
\]
is an admissible regular simplicial cone. We may assume that $\tau$ is a simplicial cone of  a regular simplicial cone subdivision $\Si^*$ of $\Ga^*(h)$. However this choice of $\tau$ and  an explicit construction of $\Si^*$ is not necessary and this particular choice of $\tau$ does not make any difference in the calculation of $\chi(\hat E(P)')$ which is clear from the following calculation. In fact, 
in this coordinate chart, $\hat \pi^* h$ is defined by
\[\begin{split}
\hat h(\mathbf u_\tau)&=u_{\tau 1}^4u_{\tau 2}^{4a-4+\al}  \left(j_{4-\al}(1,u_{\tau 2})+
u_{\tau 2}^{b-2a+3-\al}u_{\tau 3}\right )\,\modulo\,(u_{\tau 1}^5)
\end{split}
\]
and $E(P)$ is defined by
\[g(u_{\tau 2},u_{\tau 3})=j_{4-\al}(1,u_{\tau 2})+u_{\tau 2}^{b-2a+3-\al}u_{\tau 3}=0.
\]
Thus we see $E(P)^*$ is isomorphic to $\mathbb C^* \setminus \{\nu_I\,\text{points}\}$ (isomorphism is given by the projection
$(u_{\tau 2}, u_{\tau 3})\mapsto u_{\tau 2}$)
and $E(P)_I^*$ is $\nu_I$ points which are   roots of $j_{4-\al}(1,u_{\tau 2})=0$. Note that $\nu_I$ does not depend on the choice of $b\gg a\gg1$.
We do the same discussion for $J=\{2,3\}$ and $K=\{1,3\}$ and we conclude the zeta-multiplicity factor is given as 
$(1-t^4)^{-\de}$ where $\de$ is the number of monomials in $\{x^4, y^4, z^4\}$ in $h_{4}$.
Thus if $\de<3$, the zeta multiplicity factor is $(1-t^4)^{-\de}$ and it can not be same with that of $f_i,\, i=1,3$, $(1-t^4)^{-4+\eps},\,\eps=0, 1$.
The case $\de=3$ is a bit different, as the zeta-multiplicity factor coincides with that of $f_3,f_4$.
In  this case,  $h_4$ contains  three monomials $x^4, y^4, z^4$ and 
$h_{4}$ is convenient. We assert
\begin{Lemma}\label{exceptional}
Assume that $h_3=xyz$ and $h_{4}$ is convenient.
Then the zeta function of $h$ is given as $(1-t^4)^{-3}$ and the Milnor number is $11$.
\end{Lemma}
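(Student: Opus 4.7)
The plan is to describe the Newton polytope of $h$, verify non-degeneracy on its maximal faces, compute $\zeta$ by Varchenko in a generic sub-case, and transfer to the degenerate case by a $\mu$-constancy argument.

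\textbf{Newton geometry.} First, since $1+1+1=3<4$, the apex $(1,1,1)$ lies strictly below the plane $x+y+z=4$; together with the outer vertices $(4,0,0),(0,4,0),(0,0,4)$ (present because $h_4$ is convenient), $\Ga(h)$ will consist of three triangular $2$-faces $\De_1,\De_2,\De_3$, where $\De_i$ has vertex $(1,1,1)$ and the two outer vertices on the plane $z_i=0$; the supporting weights are $P_1={}^t(1,1,2)$, $P_2={}^t(2,1,1)$, $P_3={}^t(1,2,1)$, all with $d(P_i;h)=4$. Any monomial of total degree $\geq 5$ violates $P_i\cdot\nu=4$, so higher-order terms play no role in any face function. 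Crucially each face function contains $xyz$; for instance $h_{\De_1}=xyz+h_4^{\{1,2\}}(x,y)$ has $\partial_z h_{\De_1}=xy\ne 0$ on $(\mathbb{C}^*)^3$, and analogously for $\De_2,\De_3$, so $h$ will be Newton non-degenerate on every $2$-face of $\Ga(h)$ regardless of the middle coefficients of $h_4$.

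\textbf{Generic zeta function.} Next I will fix a convenient $h_4^{gen}$ agreeing with $h_4$ in the $x^4,y^4,z^4$ coefficients but with generic middle coefficients, so that $h^{gen}=xyz+h_4^{gen}+\cdots$ is Newton non-degenerate on every face. Varchenko's formula (Lemma \ref{Va}), stratified over $I\subset\{1,2,3\}$, yields
\[
\zeta(h^{gen})=(1-t^4)^{-12}\cdot(1-t^4)^{12}\cdot(1-t^4)^{-3}=(1-t^4)^{-3},
\]
with factors coming from: each $P_i$ and tetrahedral cone $\Cone(\De_i)$ of volume $8/3$ giving $\chi(E(P_i)^*)=-4$; each $|I|=2$ restriction to a coordinate hyperplane giving $\chi=4$; each $|I|=1$ restriction giving $\chi=-1$. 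From $\deg\zeta=-1-\mu$ one reads off $\mu(h^{gen})=11$.

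\textbf{Transfer via $\mu$-constancy.} To finish, consider the linear family $h_\tau=h+\tau(h^{gen}-h)$, $\tau\in[0,1]$; convenience is preserved throughout. Kouchnirenko's inequality for the common Newton polytope gives $\mu(h_\tau)\geq\nu(\Ga^+(h))=11$. For the matching upper bound, one works in $A_\tau=\mathcal{O}_0/J(h_\tau)$: the relations $\partial_z h_\tau=xy+O(\mathfrak m^3)\in J$ (and cyclic) place $xy,xz,yz\in\mathfrak{m}_{A_\tau}^3$; multiplying by a single variable then places every mixed degree-$3$ monomial in $\mathfrak{m}_{A_\tau}^4$; the identity $x\,\partial_x h_\tau=xyz+4c_{400}x^4+\cdots\in J$ and its cyclic partners (using $c_{400},c_{040},c_{004}\ne 0$) force $x^4\equiv y^4\equiv z^4$ in $\mathfrak{m}_{A_\tau}^4/\mathfrak{m}_{A_\tau}^5$ and $\mathfrak{m}^5\subset J$. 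Summing filtration quotients $1+3+3+3+1$ gives $\dim_{\mathbb{C}}A_\tau\leq 11$. Thus $\mu(h_\tau)\equiv 11$, the family is $\mu$-constant, and Lemma \ref{zeta-const} yields $\zeta(h)=\zeta(h^{gen})=(1-t^4)^{-3}$ with $\mu(h)=11$.

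\textbf{Main obstacle.} The hard part is the Hilbert-function upper bound, specifically $\mathfrak{m}^5\subset J$, which requires explicit algebraic manipulations of the Jacobian generators that leverage the non-vanishing of the diagonal quartic coefficients. The argument will mirror the direct basis computation for the Fermat model $xyz+x^4+y^4+z^4$, for which $\{1,x,y,z,x^2,y^2,z^2,x^3,y^3,z^3,xyz\}$ is readily verified as a $\mathbb{C}$-basis of $A$.
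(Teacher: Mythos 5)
Your route is genuinely different from the paper's. The paper proves this lemma by a \emph{generic linear change of coordinates}: after replacing $(x,y,z)$ by generic linear forms, $h_3$ becomes a convenient product of three lines whose nodes lie off the coordinate axes, the function becomes almost Newton non-degenerate in the sense of \cite{Almost}, and Theorem \ref{main1} gives $\zeta_h(t)=(1-t^4)^{-3}$ directly (the degree-$3$ divisor contributes exponent $-3+3A_1=0$, each node contributes $(1-t^4)^{-1}$). You instead stay in the original coordinates, observe that the Newton polyhedron is the $T_{4,4,4}$-type polyhedron with apex $(1,1,1)$, compute the zeta function of a Newton non-degenerate representative by Varchenko (your volume bookkeeping $(1-t^4)^{-12}\cdot(1-t^4)^{12}\cdot(1-t^4)^{-3}$ and $\mu=11$ check out; it agrees with the Newton number of this polyhedron), and then transfer to the possibly degenerate $h$ by a linear $\mu$-constant deformation together with Lemma \ref{zeta-const}. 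This buys independence from the almost-non-degeneracy machinery at the price of having to prove $\mu(h_\tau)\equiv 11$ by hand; you correctly fix the diagonal coefficients along the family so that the Newton polyhedron and Kouchnirenko's lower bound $\mu\ge 11$ persist at every $\tau$. (Minor slip: your labelling pairs $\De_1$ with $P_1={}^t(1,1,2)$ while describing $\De_1$ as the face over $\{z_1=0\}$; the weight for that face is ${}^t(2,1,1)$. This does not affect the symmetric computation.)

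The only real gap is the one you flag yourself: the upper bound $\dim_{\mathbb C}\mathcal O_0/J(h_\tau)\le 11$, in particular $\mathfrak m^5\subset J$, is asserted but not carried out, and your stated mechanism ($x\partial_xh_\tau\in J$ forcing $\mathfrak m^5\subset J$) is not quite the right relation. The step does work, and more easily than you anticipate, for \emph{arbitrary} middle coefficients: since $xy,yz,xz$ are each congruent mod $J$ to elements of $\mathfrak m^3$, every degree-$5$ monomial divisible by two distinct variables factors as (mixed pair)$\times$(cubic monomial) and hence lies in $J+\mathfrak m^6$; for the pure powers use $x^2\partial_xh_\tau=x^2yz+4c_{400}x^5+(\text{terms divisible by two distinct variables})+\mathfrak m^6$, so $c_{400}\ne0$ gives $x^5\in J+\mathfrak m^6$, and cyclically $y^5,z^5$; then $\mathfrak m^5\subset J+\mathfrak m^6$ iterates to $\mathfrak m^5\subset J$ by Krull's intersection theorem. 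Note also that your degree-$4$ relations come from $x\partial_xh_\tau-y\partial_yh_\tau$ and its cyclic partner (the $xyz$ terms cancel, mixed quartics are absorbed as above), giving the two relations among $x^4,y^4,z^4$ needed for the count $1+3+3+3+1=11$; beware that $\partial_xq$ may contain the pure cubes $y^3,z^3$ (from $xy^3,xz^3$ in $q$), but these only produce monomials divisible by two variables after multiplication, so they are harmless. With that step written out, your proof is complete and gives the same conclusion as the paper's, with the added dividend of an elementary identification of the local algebra (Hilbert function $1,3,3,3,1$), whereas the paper's argument is shorter once Theorem \ref{main1} is available.
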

Thus assuming this lemma, $h$ can not have the same zeta-function as $f_1$ or $f_3$
which are $(1-t^4)^{-4+\eps}(1-t^{4+m})^{-3-\eps}$, $\eps=0,1$.
\begin{proof}
We choose another linear coordinate $(x',y',z')$ so that 
$x=\ell_1:=(a_1x'+a_2y'+a_3z'),y=\ell_2:=(b_1x'+b_2y'+b_3z'), z=\ell_3:=(c_1x'+c_2y'+c_3z')$ where $a_i,b_i,c_i\ne 0, i=1,2,3$ are generic non-zero complex numbers and they satisfy
$h_{4}(a_i,b_i,c_i)\ne 0$ for $i=1,2,3$. 
 In this coordinate, we have $h_3=\ell_1\ell_2\ell_3$ and consider the homogeneous polynomial
$H_{3}(x',y',z'):=h_{3}(\ell_1,\ell_2,\ell_3)$.
The intersection points in $\mathbb P^2$ of three lines $x=0,y=0,z=0$ are 
$\rho_1=(1,0,0)$, $\rho_2=(0,1,0)$ and $\rho_3=(0,0,1)$. In the new coordinates $(x',y',z')$, we have:
\begin{Assertion} In the coordinates $(x',y',z')$, $\rho_i,\, i=1,2,3$ are not on the coordinate lines $ \{x'y'z'= 0\}$ and $H_3(x', y', z')$
is convenient.
\end{Assertion}
\begin{proof} By solving  explicitly respective linear equations
$\ell_1-1= \ell_2=\ell_3=0$, $\ell_1=\ell_2-1=\ell_3=0$ and $\ell_1=\ell_2=\ell_3-1=0$ in $x',y',z'$,   we can easily see that $\rho_1,\rho_2,\rho_3$ are outside of the lines $x'y'z'=0$
as long as $a_i,b_i, c_i,\,i=1,2,3$ are generically chosen.
(To see the intersection of $\ell_2=\ell_3=0$ in  $(x',y',z')$ coordinates of $\mathbb P^2$, we may 
assume that $\ell_1=1$ on that point.)  As $H_3(0,0,1)=h_{3}(a_3,b_3,c_3)\ne 0$. 
Similarly $H_3(1,0,0)$ and  $ H_3(0,1,0)\ne 0$. This means $ H_3$ is a convenient polynomial.
\end{proof}
Put $\al_1,\al_2,\al_3$ be the coefficients of ${x' }^4,{y'}^4, {z'}^4$ in $H_4(x',y',z'):=h_4(\ell_1,\ell_2,\ell_3)$.
Now we consider the toric modification $\hat \pi: X\to \mathbb C^3$ with respect to
$\Si^*$ with vertices $\{e_1,e_2,e_3,P\}$ where $ P={}^t(1,1,1)$ and the coordinates are $(x',y',z')$. The exceptional divisor $E(P)$ has three $A_1$ singularities at $\rho_i, i=1,2,3$. Take the toric chart $\si=\Cone\,(P,e_2,e_3)$. Let $\mathbf w=(w_1,w_2,w_3), w_1=u_{\si 1}$ be an admissible coordinate at $\rho_i$
so that 
\[\begin{split}
\hat H&=H(u_{\si 1},u_{\si 1}u_{\si 2},u_{\si 1}u_{\si 3})\\
&\equiv u_{\si 1}^3\left(H_3(1,u_{\si2},u_{\si3})+\al_1 u_{\si 1}+R)\right )\\
&=w_1^3(w_2^2+w_3^2+\al_1w_1+R)
\end{split}
\]
where $R\in (w_1^2)$.
Note that zeta function of $w_1^3(w_2^2+w_3^2+\al_1 w_1+R)$ is  
determined by 
$w_1^3(w_2^2+w_3^2+\al_1 w_1)$  whose zeta function is
$(1-t^4)^{-1}$.
Thus $H$ is an almost non-degenerate function in the coordinates $(x',y',z')$. 
This function is the same as the function considered in Example 2, \S 3.4 \cite{Almost}. We apply Theorem \ref{main1}, \cite{Almost} to get
$\zeta_{H}(t)=(1-t^4)^{-3}$ which proves the assertion.
\end{proof}
 { Subcase 2-1-b.} Suppose $h_3=0$ is a 3 lines which intersect  at a point in $\mathbb P^2$. We may assume that 
$h_3=x y (x+ay),\,a\ne 0$ (after a linear change of coordinate). 
Take a toric modification with  an admissible regular simplicial subdivision $\Si^*$.
It has a vertex $P={}^t(1,1,\al),\,\al\ge 1$ in $\Si^*$ which is adjacent to $e_3$ and $h_P=h_3$. This vertex gives $d(P)=3$ and  it gives factor
$(1-t^3)$ in the zeta function by A'Campo formula.
(In the Varchenko formula, this corresponds to $\zeta_I(t)$ with $I=\{1,2\}$.)
We can see no other vertices $\Si^*$ contribute the factor $(1-t^3)$. After further blowing ups,
no exceptional divisors appears with multiplicity $3$.  This is a contradiction to the assumption.
\nl\indent
{Subcase 2-1-c.} $h_3=0$ are two lines where one line is doubled. Then we may assume that $h_3=x^2y$. 
If $\Ga(h)$ has a face of dimension 2 of degree 4, the only possibility is $h_4(x,0,z)$ has 1 dimensional support and with $x^2y$, it generate a face of dimension 2 with weight vector $P={}^t(1,2,1)$.
1-dimensional faces can be on $h_4(x,0,z)$ and $h_4(0,y,z)$.
We consider again degree 4 component $h_{4}$. Let $I=\{1,3\}$. If $h_{4}^I$ is 0 or a single monomial $x^az^{4-a}$ 
with $1\le a\le 3$, there are no possible degree 4 face of dimension $2$.
 If $h_4^I(x,z)$ is not  a monomial, assuming $x^4$ appears in $I$, we consider $\si=\Cone\,(P,e_2,e_3)$  where $P={}^t(1,2,1)$. 
 Let $\nu_I$ be the number of non-zero distinct roots of $f_{4}^I(x,z)=0$  as before.
 Then $x=u_{\si1},y=u_{\si 1}^2u_{\si 2},z=u_{\si 1}u_{\si 3}$ and 
$E(P)^*$ is defined by
$u_{\si 2}+f_{4}^I(1,u_{\si 3})=0$. Thus $\chi(\hat E(P)^*)=-\nu_I$ and $\chi(E(P)_I^*)=\nu_I$.
 If $z^4$ appears in $h_4^I$, we consider in the chart $\Cone\,(P,e_1,e_3)$. If neither $x^4$ nor $z^4$ appears in $h_4^I$, we take 
 an admissible simplical cone
 $\tau=\Cone\,(P,Q,e_2)$ where $Q={}^t(a-1,b,a)$ with $a\gg 1$  and $b\gg a$ and the similar argument works as in Case 2-1-2. Anyway the contribution from  $h_4^I$ is cancelled with contribution from $E(P)^*$. However for $J=\{2,3\}$, $h^J(y,z)$ contribute to the zeta function by 
 $(1-t^4)^{\nu_J}$. On $\{x,y\}$ planes, there are no degree $4$ edges.

 Let $\de$ be the number of monomials among $\{x^4, y^4, z^4\}$ which appears in $h_4(x,y,z)$. Each monomial contribute by the factor $(1-t^4)^{-1}$ in the zeta function of $h$ and altogether we get  
$(1-t^4)^{-\de}$. Altogether the contribution to the zeta function on the factor  is $(1-t^4)^{-\de+\nu_J}$. As $-\de+\nu_J >-3$,  we get a contradiction to the assumption.

\indent
{ Subcase 2-1-d.} $h_3=0$ is a line with multiplicity 3. We assume that $h_3=x^3$. It is easy to see that the only possible 
 vertex $P$ with $d(P)=4$ which satisfies one of the conditions in Lemma \ref{essential vertex} is  $P={}^t(a,1,1),\,a>1$ and $h_P(x,y,z)=h_4(0,y,z)$.
Let $\de$ be the number of monomials  $\{y^4,z^4\}$ in $h_4(0,y,z)$
and let $\nu$ be the number of non-zero roots of $h_4(0,1,z)=0$.
Then the contribution to zeta-multiplicity factor is $(1-t^4)^{\nu-\de}$.
As $-1\le\nu-\de\le 2$, this is a contradiction to the assumption.
Alternatively we can also have a contradiction by showing that $x^3$ gives a factor $(1-t^3)^{-1}$.

\vspace{.3cm}\noindent
{\bf Subcase 2-2}. {\em Suppose $h_3=0$ is a union of a conic $C$ and a line $L$.}

\noindent\indent
{Subcase 2-2-1}. {\em The conic $C$ and the line $L$ are transversal
in $\mathbb P^2$.}
We may assume that $h_3$ is convenient and two $A_1$ is not on  $xyz= 0$.
After one toric blowing up with vertices $\{e_1,e_2,e_3, P\}$  with $P:={}^t(1,1,1)\}$, we see that the exceptional divisor $E(P)\subset\hat E(P)$ has $2 A_1$ singularities  at the intersection of the conic and the line component and we see that the divisor $\hat E(P)$ gives zeta-multiplicity factor
$(1-t^3)$. This is a contradiction to the assumption.

\indent
{ Subcase 2-2-2}. {\em The conic $C$ and the line $L$ are tangent in $\mathbb P^2$.} Put $p=C\cap L$. Then the singularity of $(C\cup L,p)$ is isomorphic to $A_3$. Using the same toric modification,   we may assume that 
$E(P)$ has one $A_3$ singularity. This divisor does not gives any zeta factor 
as we see below.
Note that the zeta function of a non-singular cubic is $(1-t^3)^{-3}$ and by Theorem \ref{main1}, in the zeta function of $h$, the exponent of zeta factor becomes $-3 +\mu(A_3)=0$.
Consider the toric chart $\si=\Cone (e_1,e_2,P)$. Then the pull-back of $h$ is written as 
\[
\hat h(\mathbf u_\si)=u_{\si 3}^3\left (h_3(u_{\si1},u_{\si 2},1)+u_{\si 3}R(\mathbf u_{\si})\right)
\]
where $u_{\si 3}R$ is coming from higher terms of $h$.
Taking an admissible coordinates $(w_1,w_2,w_3),\,w_3=u_{\si 3}$ at the singular point
$\rho=(\al_1,\al_2,0)$,
we can write
\[
\hat h(\mathbf w)=w_3^3\left( w_1^2+w_2^4+w_3R(\mathbf w)
\right)
\]

 If the multiplicity of $w_3R(\mathbf w)$ is greater than or equal to 2, the multiplicity of $\hat h$ at the singular point is greater than or equal to $5$ and no factor $(1-t^4)$ appears in the zeta function. Only possible case is when  
$w_3R(\mathbf w)\equiv a w_3, a\ne 0$ modulo higher terms. Then  $\hat h(\mathbf w)$ is non-degenerate and by Theorem \ref{main1}, $\zeta_h(t)=(1-t^{16})^{-1}(1-t^8)(1-t^4)^{-1}$.
(For example, we can take as $h$, $(x^2+y^2-2 z^2)(x+y-2z)+z^4$.)
 This is also contradiction to the assumption.

\vspace{.3cm}\noindent
{\bf Subcase 2-3}.  Suppose that $h_3=0$ is an irreducible cubic.
We can choose a generic linear coordinates so that $h^{(3)}(x,y,z)$ is convenient and a possible singularity is one $A_1$ or one $A_2$. By Theorem \ref{main1}, the zeta-multiplicity factor is one of 
$(1-t^3)^{-3},\,(1-t^3)^{-2},\,(1-t^3)^{-1}$ according to  the cubic is non-singular, or one $A_1$ or   one $A_2$.
This is also a contradiction.

\noindent
\subsection{Case 3} {\em  Multiplicity 4.} We consider the last case. Assume that  $h=h_s$ has multiplicity 4. 
We divide this case in  two cases by the geometry of the curve $h_{s4}=0$.
\nl
(3-1) $h_{s4}=0$ has  non-isolated singularity for some $s$.
\nl
(3-2) $h_{s4}=0$ has only isolated singularity for any $s$.

We will show that the only possible case is (3.2).
\nl
{\bf Subcase} (3-1). Suppose it has non isolated singularity for some $s$.   Then  the possibility of $h_4=0$ are  
\nl
(a) one line with multiplicity 4, or
\nl
 (b) two lines where one line has multiplicity 3,  or \nl
 (c) two lines  of multiplicity 2, or \nl
  (d) one line with multiplicity 2 and two other lines, or
  \nl
(e)  one line with multiplicity 2 and an irreducible conic,
or
\nl (f) one conic  with multiplicity 2.

{ Subcase (3-1-a)} We assume the case (a).   Choose a linear coordinates so that $h_4=x^4$. Then it is easy to see that there are no faces of dimension $2$ or $1$ with degree $4$.
 The only possible  effective vertex with multiplicity $4$ is of the form $P={}^t(1,a, b)$ corresponding to the monomial $x^4$ and the contribution is $(1-t^4)^{-1}$. This is a contradiction to the assumption.

\indent
{ Subcase (3-1-b)} Consider the case (b).  We assume that $h_4=x^3y$.  In this case, it  is  impossible to  have an effective  exceptional divisor of multiplicity 4.
See Lemma \ref{essential vertex}.
\nl\indent
{ Subcase (3-1-c)} Assume that $h_4=x^2y^2$.  The same reason as the case $(b)$.\nl
\indent
{ Subcase (3-1-d)} Assume that $h_4=0$ has three lines $L_1, L_2, L_3$ where $L_1$ has  multiplicity 2.
After one point blowing-up $\hat \pi:X\to \mathbb C^3$,
$\hat E(P)\cong \mathbb P^2$ and $E(P)$ is a union of  three lines $L_1,L_2,L_3$ where $L_1$ has multiplicity 2. We can assume that $h_4=xyz^2$ or $x^2y(x+ay),\,a\ne 0$.
In the case $h_4=xyz^2$, there are no possibility of vertex $P$ with $d(P)=4$ and which contribute to the zeta function. Assume that $h_4=x^2 y(x+ay)$, the only possible vertex take the form $P={}^t(1,1,b), b\ge 1$ which is adjacent to $e_3$ and $h_P=h_4$. If this is the case, its contribution is $(1-t^4)$. This is also a contradiction to the assumption. (This case, we can also do the same discussion as Case (3-1-e) below.)
%

{Subcase (3-1-e)} We can assume that $h_4(x,y)=x^2j_2(x,y,z)$ with $j_2$ is a smooth conic.  We take a toric modification $\hat\pi:X\to \mathbb C^3$ which respect to $\Si^*=\{e_1, e_2,e_3,P\}$, $P={}^t(1,1,1)$.  $E(P)$ is a union of smooth conic $C$ and a line of multiplicity 2. 
Using the toric chart $\si:=\Cone(e_1,e_2,P)$, the pull-back of $h$ takes the form  
\[\begin{split}
h(\mathbf u_\si)&=h_4(\mathbf u_\si)+h_\ell(\mathbf u_\si)+\text{(higher degree terms)},\,\ell\ge 5\\
\hat h( u_{\si 11}, u_{\si 2}, u_{\si 3})&= u_{\si 3}^4\left(
 u_{\si 1}^2j_2( u_{\si 1}, u_{\si 2},1)+ u_{\si 3}^{\ell-4}h_\ell( u_{\si 1}, u_{\si 2},1)+\dots
\right)
\end{split}
\]
$C\cap L$ is either 2 points or one point. 
Put them $\rho_1$ and $\rho_2$ be the intersection of $C$ and $L$ in $E(P)$. In the latter case, $L$ is tangent to $C$ and $\rho_2=\rho_1$.
On $L$ and $C$, there are finite points such that the function $\hat h$ is not equi-singular  along $L$ or $C$.  These exceptional points on $L$ are $\rho_1,\rho_2$ and the points in  the intersection  $ u_{\si 1}=h_\ell( u_{\si 1}, u_{\si 2},1)=0$ on $L$  and $j_2( u_{\si 1}, u_{\si 2},1)=h_\ell( u_{\si 1}, u_{\si 2},1)=0$ on $C$ respectively. 
Put them $\rho_3,\dots, \rho_k$.
Take a small $\eps$ ball $B_{\eps}(\rho_i)$ centered at $\rho_i$ and put $B=\cup_{i=1}^k B_\eps(\rho_i)$.
Let $N(\hat E(P))$, $N(L)$ and $N(C)$ be the sufficiently small controlled tubular neighborhoods of  $\hat E(P)\setminus (L\cup C)$, $L\setminus ( L\cap B)$ and $C\setminus (C\cap B)$. Put $N=N(L)\cup N(C)$.
Let $N(\hat E(P))'=N(\hat E(P))\setminus (B\cup N)$.
We divide the Milnor fibration of $\hat h$  into fibrations on
$N(\hat E(P))', N(L), N(C)$ and $B_\eps(\rho_i), i=1,\dots,k$
and carry out  the exact same argument as that  in \cite{Almost}. Note that $\chi(N(\hat E(P))')=1$ or $0$, according $\rho_1\ne \rho_2$ or $\rho_1=\rho_2$ and $\chi(L\cup C)=2$ or $3$. The zeta function of $\hat h|_{N(\hat E(P))'}$ is 
$(1-t^4)^{-1}$ or $(1-t^4)^0=1$. The contribution of the zeta function  $\hat h|_{N(L)}$ to the zeta-multiplicity factor $(1-t^4)$ is trivial  as the normal zeta function is described  by the $ u_{\si 3}^4( u_{\si 1}^2+ u_{\si 3}^{\ell-4})$ (Sublemma 4,\cite{Almost}).
Similarly the restriction of the Milnor fibration $\hat h$ on $N(C)$ does not contribute to the zeta multiplicity factor as the normal zeta function corresponds to
$ u_{\si 3}^4(\tilde j_2+ u_{\si 3}^{\ell-4})$ where $( u_{\si 3}, \tilde j_2)$, $\tilde j_2=j_2(u_{\si 1},u_{\si 2},1)$ is coordinates of the normal slice.  That is, there is a local coordinates $(u_{\si 3}, \tilde j_2, \exists v_1)$ locally where $v_1$ is a local coordinate of $C$.
 To get  the zeta function of  the restriction of Milnor fibration on $B\setminus \hat h\inv(0)$, we have to take further resolution. However over $\rho_i$, we get 
 exceptional divisors of multiplicity greater than or equal to $5$, as the multiplicity 
 of $\hat h$ at $\rho_i$ is greater than or equal to $5$. Over $N(L)$ and $N(C)$, the multiplicity of $\hat h$ is $6$ and $5$ respectively.
Combining these data, the possible zeta-multiplicity factor  in $\zeta_h(t)$ is either 
$(1-t^4)^{-1}$ or $1$, a contradiction to the assumption.

{ Subcase (3-1-f)} Assume that $h_4=0$ is non-reduced conic $C$ of multiplicity $2$. We assume that $h_4=j_2^2$ and  $h=j_2^2+h_\ell+\text{(higher terms)}$ as above.
 After one point blowing-up $\hat \pi:X\to \mathbb  C^3$, the exceptional divisor is $\hat E(P)=\mathbb P^2$ and 
 $E(P)$ is a non-reduced conic. Use again the toric chart $\si=\Cone(e_1,e_2,P)$ as above.
 Then
 \[
 \hat h(\mathbf u_\si)=u_{\si 3}^4\left(j_2(u_{\si 1},u_{\si 2},1)^2+u_{\si 3}^{\ell-4}h_\ell(u_{\si 1},u_{\si 2},1)+\text{(higher terms)}
 \right).
 \]
 Let $\rho_1,\dots, \rho_k$ be the intersection of 
 $j_2(u_{\si 1},u_{\si 2},1)=h_\ell(u_{\si 1},u_{\si 2},1)=0$ and take a small disk $B_\eps(\rho_i)$ centered at $\rho_i$ for each $i=1,\dots,k$ and put $B=\cup_{i=1}^k B_\eps(\rho_i)$.
 Take a controlled tubular neighborhoods $N(\hat E(P)\setminus C)$,  $N(C)$ of $C=E(P)\setminus B$ and $N(\hat E(P))$ be a tubular neighborhood of $\hat E(P)\setminus (B\cup N(C))$.
 We divide Milnor fibration into the following  parts. The complement $N(\hat E(P))':=N(\hat E(P))\setminus (N(C)\cup B)$ and $N(C)\setminus (N(C)\cap B)$ and $B$.   We do the same discussion as in Subcase (3-1-e). Thus $N(\hat E(P))'$ contribute to the zeta function by
 $(1-t^4)^{-3+2}=(1-t^4)^{-1}$. 
 On $N(C)$, the normal zeta function is described by
 $u_{\si 3}^4(j_2(u_{\si 1},u_{\si 2},1)^2+u_{\si 3}^{\ell-4}(u_{\si 1},u_{\si 2},1)+\text{(higher terms)}$ 
 which is equivalent to 
 $u_{\si 3}^4(\tilde j_2^2+u_{\si 3}^{\ell-4})$ with $(u_{\si 3},\tilde j_2)$ are coordinates of the normal slice. Thus it contribute for the factor $(1-t^4)$  trivially. On $\rho_i$, $\hat h$ has multiplicity $7$ and also this part also gives  nothing for the zeta-multiplicity factor. Thus we conclude that the case (3-1-f) is also not possible.
 
 \vspace{.2cm}\noindent
 {\bf Subcase 3-2 (Last case)}. Assume that the family $h_s=0$ has only isolated singularity 
 and multiplicity is $4$ and the total and local  Milnor numbers of $h_{s4}$ must be constant for any $0\le s\le 1$ by Theorem \ref{main1}.  
 This implies $h_s$ is a $\mu^*$-constant family from $f_1$ to $f_2$ or from $f_3$ to $f_4$. However 
 by  Lemma \ref{mu*-zariski}, 
 this is impossible and the proof of Theorem \ref{main1} is completed.
\section{Geometric structure of the links $K_{f_1}$ and $K_{f_2}$}
In this section, we study further geometric structure of the link 3-manifolds $K_{f_i}, i=1,2$.  As second  result, we will   show that they are not diffeomorphic, though their zeta functions are equal.
Recall that 
\[\begin{split}
f_1(x,y,z)&=q_1(x,y,z)+z^{m+4},\\
f_2(x,y,z)&=q_2(x,y,z)+z^{m+4}. 
\end{split}
\]
where $q_1$ is a quadric with 3 $A_1$ singularities and $q_{2}$ is a union of a smooth cubic and a generic line.
We assume that all of these polynomials are convenient.
Let $f(\mathbf z)$ be one of $f_1$ or $f_2$. First we take a toric modification $\hat \pi: X\to \mathbb C^3$
with respect to the vertices
$\{e_1,e_2,e_3,P\}$ with $P={}^t(1,1,1)$. Take the coordinate chart $\xi=\Cone\,(e_1,e_2,P)$
with coordinates $\mathbf u_\xi=(u_{\xi 1},u_{\xi2},u_{\xi3})$. The pull back of $f$ is given  as
\begin{eqnarray}\label{left1}
\hat f_i(\mathbf u_{\xi}):=\hat\pi^*f_i(\mathbf u_{\xi})&=
u_{\xi 3}^4\left( q_i(u_{\xi 1},u_{\xi 2},1)+u_{\xi 3}^m\right ), i=1,2
\end{eqnarray}
as $(x,y,z)=(u_{\xi 1}u_{\xi 3},u_{\xi 2}u_{\xi 3},u_{\xi 3})$.

Let $\rho_\al,\, \al=1,2,3$ be the singular points of $E(P)$. Take 
 $(v_{\al 1},v_{\al2},v_{\al3})$   admissible coordinates in the neighborhood $U_\al$ centered at $\rho_\al$ so that $v_{\al 3}=u_{\xi3}$ and 
 %
 \begin{eqnarray}
 \hat f_i(\mathbf v)=u_3^4(v_{\al1}^2+v_{\al2}^2+u_3^{m}).
 \end{eqnarray}
 To distinguish from the local coordinates, we write $v_{\al3}=u_{\xi3}$  as $u_3$.
 For $f_2$, we   consider  also another coordinates.
 Let $q_{2,3}, q_{2,1}$ be the defining polynomial of the cubic and linear component of $q_2=0$. Thus $q_2=q_{2,3}q_{2,1}$. We define $w_1:=q_{2,3}(u_{\xi1},u_{\xi2},1)$ and $w_2:=q_{2,1}(u_{\xi1},u_{\xi2},1)$.  
 Note that 
 \[\begin{split}
 q_{2,3}(u_{\xi 1}u_{\xi 3},u_{\xi 2}u_{\xi 3},u_{\xi 3})&=u_{\xi 3}^3\,q_{2,3}(u_{\xi1},u_{\xi2},1 )=u_{\xi 3}^3w_1,\\
 q_{2,1}(u_{\xi 1}u_{\xi 3},u_{\xi 2}u_{\xi 3},u_{\xi 3})&=u_{\xi 3}\, q_{2,1}(u_{\xi1},u_{\xi2},1)=u_{\xi 1}w_2
 \end{split}
 \]
 and $q_{ 2,i}(u_{\xi1},u_{\xi2},1)=0$ with $i=3,\, 1$  are the defining polynomials of the strict transform of the cubic and the line component respectively. As the cubic and the line intersect transversely, $(w_1,w_2)$ is a local coordinate of $E(P)$ 
 and 
 $(w_1,w_2,u_3)$ is a local coordinate of $X$ 
 in the neighborhood of $\rho_\al,\,\al=1,2,3$ for $f_2$
  so that 
the pull-back of $\hat f_2(\mathbf u_\xi)$ is now given as 
\[ \hat f_2(w_1,w_2,u_3)=u_3^4(w_1w_2+u_3^m).
 \]
 Thus the local coordinates $(v_{\al1},v_{\al2})$ for $f_2$ are  chosen so that  they satisfy
 \begin{eqnarray}
 v_{\al1}+\sqrt{-1}v_{\al2}=w_1,\quad v_{\al1}-\sqrt{-1}v_{\al2}=w_2.
 \end{eqnarray}
 $w_1, w_2$ is also globally defined on the toric chart $U_\xi$
(and also on $X$ as a meromorphic function).
 As is obvious from the expression, $\hat f_i$ is weighted homogeneous in $\mathbf v_\al$ and  the dual Newton diagram $\Ga^*(\hat f_i;\mathbf v_\al)$ at $\rho_\al$ has only one positive vertex   $R_\al={}^t(m,m,2)$ or $S_{m_0}={}^t(m_0,m_0,1)$ for $m$ odd or even ($m=2m_0$) respectively$e_1,e_2,e_3$. $R_\al$ (or $S_{m_0}$) is the weight vector of $\hat f_i(\mathbf v_\al)$.
 
 \subsection{Regular simplicial subdivision $\Si_\al^*$}
Suppose $m$ is an odd integer and put $m=2m_0+1$.
The regular simplicial cone subdivision $\Si_\al^*$ is given as the left $\Si_o^*$ of   Figure \ref{Figure1}.
Here $R_\al={}^t(m,m,2),\,T_\al={}^t(m_0+1,m_0+1,1)$ for $m=2m_0+1$
and $m_0$  vertices $S_{\al,1},\dots, S_{\al,m_0}$   are added where $S_{\al,i}={}^t(i,i,1),\,i=1,\dots,m_0$.
For an even  $m=2m_0$, we do not need $T_\al$ and $R_\al={}^t(m_0,m_0,1)$.
 See  the right subdivision $\Si_e^*$ of Figure \ref{Figure1}.  In the following, we consider the case $m=2m_0+1$ first.  The case $m=2m_0$ is similar.
 \begin{figure}[htb]  
\setlength{\unitlength}{1bp}
\begin{picture}(600,300) (0,-100) 
{\includegraphics[width=12cm, bb=0 0 470 182]{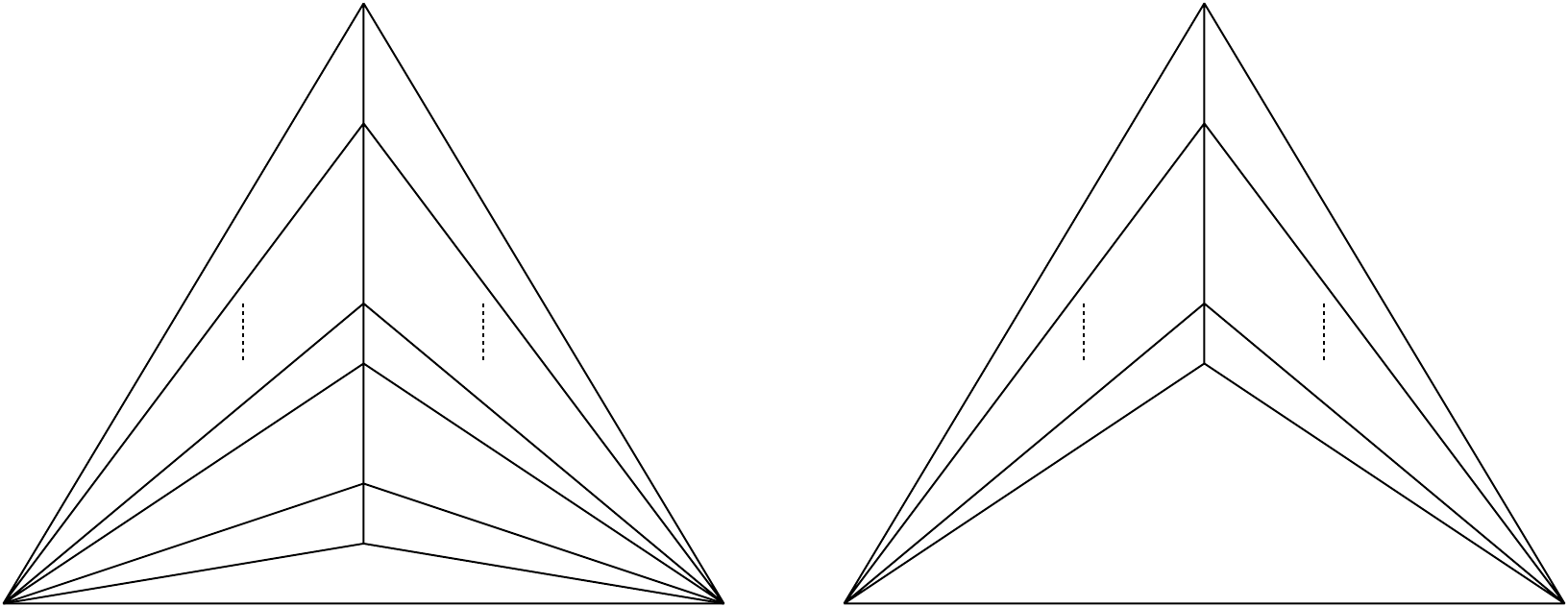}}
\put(-274,104){${\text{\tiny S}}_1$}
\put(-261,104){\circle*{3}}
\put(-90,103){${\text{\tiny S}}_1$}
\put(-79,105){\circle*{3}}
\put(-290, -10){$\Si_{o}^*,\,m=2m_0+1$}
\put(-90,-10){$\Si_{e}^*,\,m=2m_0$}
\put(-270,6){{\tiny T}}
\put(-261,13){\circle*{3}}
\put(-272,40){${\text{\tiny S}}_{{\text{\tiny m}}_0}$}
\put(-261,53){\circle*{3}}
\put(-261,65){\circle*{3}}
\put(-84,40){${\text{\tiny S}}_{{\text{\tiny m}}_0}$}
\put(-79,52){\circle*{3}}
\put(-79,65){\circle*{3}}
\put(-270,26){{\tiny R}}
\put(-261,26){\circle*{3}}
\put(-272,134){$e_3$}
\put(-261,131){\circle*{3}}
\put(-77,134){$e_3$}
\put(-79,132){\circle*{3}}
\put(-498,5){$e_{\al,1}$}
\put(-350,0){$e_{1}$}
\put(-339,0){\circle*{3}}
\put(-182,0){$e_2$}
\put(-184,0){\circle*{3}}
\put(-168,0){$e_1$}
\put(-157,0){\circle*{3}}
\put(2,0){$e_2$}
\put(-0,0){\circle*{3}}
\end{picture}
\vspace{-3cm}
\caption{ $\Si_o^*:m=2m_0+1$:odd,\, 
$\Si_e^*:m=2m_0$:even}\label{Figure1}
\end{figure}

 
 To see the manifold structure, we consider 
a toric modification 
$
\omega_\al : Y_\al\to X, \quad \al=1,2,3
$ with respect to $\Si_o^*$ or $\Si_e^*$. ($\Si_o^*,\,\Si_e^*$ is the same for every $\rho_\al.$)
Three modification  can be canonically glued together to get a final resolution
$\omega: Y\to X$ and   by taking composition with $\hat \pi:X\to \mathbb C^3$,
we get a good resolution of $f_i$ restricting $\Pi: Y\to \mathbb C^3$
to an open neighborhood $\mathcal U_0$ of the origin. The exceptional divisors of $\Pi$ are all compact and 
they are $\hat E(P)$ (from $\hat \pi$) and $\hat E(R_\al),\hat E(S_{\al i}),\,i=1,\dots, m_0$  from $\omega_\al,\,\al=1,2,3$, and  $\hat E(T_\al)$ for an odd $m=2m_0+1$. (If $m=2m_0$, the exceptional divisors are
$\hat E(S_i),\, i=1,\dots, m_0$.)
Let $\tilde V_\be$ be the strict transform of $V_\be=f_\be\inv(0)$ to $Y$. Hereafter $\al=1,2,3$ are the indices of the singular points and $\be=1,2$ are the choice of functions.
Recall that for a vertex $K\in \Si^*$, the restriction of the exceptional divisors $E(K)$ to $\tilde V_\be$ are non-empty if the supporting face $\De(K)$ has dimension greater than or equal to 1. Thus $E(T_\al)$ is empty as $f_{\be,T_\al}=u_3^{m+4}$.
\begin{Remark}{\rm  To be precise, the resolution space $X$ and  $Y$ for $f_1$ and for $f_2$ are different complex spaces and it is better to be distinguished
and to be written as $\hat \pi_1:X_1\to \mathbb C^3$, $\hat \pi_2:X_2\to \mathbb C^3$ and 
$\Pi_1:Y_1\to \mathbb C^3$ and $\Pi_2:Y_2\to \mathbb C^3$.
Also the exceptional divisors $S_{\al,i}, i=1,\dots, m_0$ and $R_\al$ are sitting in the different space $Y_1$ and $Y_2$. Thus it is more precise to write them as 
$S_{\al,i,j}$ and $R_{\al,j}$ for $j=1,2$.
However except the central divisor $P$, they are the same Riemann surfaces and the resolution graphs are isomorphic and  the link 3-manifolds are determined by the dual  resolution graphs. Therefore we ignore this too precise notations
 (with too many suffixes)  and we use  the same letter for the both cases, unless any confusion is likely.}
\end{Remark}

 We denote the strict transform of the exceptional divisor of $\hat \pi_\be:X_\be\to \mathbb C^3$, $ E(P)\subset X_\be$, to $Y$ by 
$P_{\be},\be=1,2$ respectively, as they are topologically  different.
\begin{Proposition} For $f_1$,  $P_1$ is a smooth rational curve. For $f_2$,  $P_2$ is a union of smooth cubic and a line $P_2=P_{2,3}+P_{2,1}$, where  $P_{2,3}$  and
$P_{2,1}$ are the strict transforms of the cubic and the line respectively.
\end{Proposition}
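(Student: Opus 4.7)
My plan is to identify $E(P)$ with a projective plane quartic in $\hat E(P) \cong \mathbb P^2$ and then trace what happens to it under the toric modifications $\omega_\alpha$ that resolve the $A_{m-1}$ singularities of $\tilde V_i$ at $\rho_1, \rho_2, \rho_3$.

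First, from (\ref{left1}) the strict transform $\tilde V_i$ restricted to $\hat E(P) = \{u_{\xi 3} = 0\}$ is cut out by the dehomogenized face function $q_i(u_{\xi 1}, u_{\xi 2}, 1)$. Gluing the three toric charts $\Cone(e_j, e_k, P)$ realizes $E(P)$ as the projective quartic $\{q_i = 0\} \subset \hat E(P) \cong \mathbb P^2$. For $i=1$ this is the irreducible quartic $Q_1$ of arithmetic genus $\binom{3}{2} = 3$ with exactly three nodes at $\rho_1, \rho_2, \rho_3$; for $i=2$ it is $C \cup L$ with $C$ a smooth cubic and $L$ a transverse line, meeting at the three $A_1$ points.

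Next I examine the local picture at $\rho_\alpha$. The identity $\hat f_i(\mathbf v_\alpha) = u_3^4(v_{\alpha 1}^2 + v_{\alpha 2}^2 + u_3^m)$ shows that $E(P)$ near $\rho_\alpha$ is the union of the two smooth transverse branches $v_{\alpha 1} \pm \sqrt{-1}\, v_{\alpha 2} = 0$ lying in $\hat E(P) = \{u_3 = 0\}$; the coordinates $(w_1, w_2)$ introduced for $f_2$ identify these branches with the local traces of $C$ and $L$, respectively. The regular subdivisions $\Sigma_o^*$ and $\Sigma_e^*$ start from the vertex $S_{\alpha, 1} = {}^t(1,1,1)$, whose associated toric blow-up is the ordinary point blow-up at $\rho_\alpha$. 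Two smooth transverse curves through a point always acquire disjoint strict transforms under such a blow-up (they meet the exceptional $\mathbb P^2$ at two distinct points $u_1 = \pm \sqrt{-1}$ in the chart $\Cone(e_2, e_3, S_{\alpha, 1})$). The remaining vertices $S_{\alpha, 2}, \ldots, R_\alpha$ (and $T_\alpha$ for odd $m$) all lie on the ray from $S_{\alpha, 1}$ toward $e_3$, so the associated centers sit over the origin of that chart, disjoint from those two distinguished points, and hence never touch the strict transform of $E(P)$ thereafter.

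Consequently, $P_\beta$ is exactly the normalization of $E(P)$ at $\rho_1, \rho_2, \rho_3$. For $f_1$ this gives a smooth irreducible curve of geometric genus $3 - 3 = 0$, hence $P_1 \cong \mathbb P^1$. For $f_2$, the two components $C$ and $L$ become separated at each $\rho_\alpha$, yielding the disjoint smooth components $P_{2,3}$ (strict transform of $C$) and $P_{2,1}$ (strict transform of $L$). The only nontrivial step is checking that the later toric centers $S_{\alpha, 2}, \dots$ leave the strict transforms of the two branches untouched; this reduces to the elementary observation in the preceding paragraph about where these centers land in $\hat E(S_{\alpha, 1})$.
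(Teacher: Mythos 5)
Your proof is correct and takes essentially the same route as the paper's: both identify $P_\beta$ as the normalization of the three-nodal quartic $E(P)$ for $f_1$ (respectively the separated strict transforms of the smooth cubic and the generic line for $f_2$) and conclude rationality by a genus/Euler-characteristic count, your $g=3-3=0$ being the same computation as the paper's $\chi(P_1)=-1+3=2$. One harmless slip: the later vertices $S_{\alpha,2},\dots,R_\alpha$ (and $T_\alpha$) lie on the segment from $S_{\alpha,1}$ \emph{away} from $e_3$, i.e.\ in the interior of $\Cone(e_1,e_2,S_{\alpha,1})$, so the subsequent centers sit over the fixed point of that chart rather than of $\Cone(e_2,e_3,S_{\alpha,1})$ --- they still miss the two points $u_{0,3}=\pm\sqrt{-1}$ where the branch strict transforms meet $\hat E(S_{\alpha,1})$, so your separation argument goes through unchanged.
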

\begin{proof} By the discussion of Euler characteristics, we know that  $\chi( E(P))=
-4+3=-1$ for $f_1$. Recall  $E(P)$ is a  quartic with $3A_1$, $\chi(E(P))=-1$. $P_1$ is the normalization of $E(P)$ at three points $\rho_\al,\,\al=1,2,3$. In $Y_\al$, each singular point $\rho_i$ is separated in two points. Thus $\chi(P_1)=-1+3=2$. The second assertion is obvious from the assumption on $q_2$.
\end{proof}
Note that $P_{2,3}\cap P_{2,1}=\emptyset$ (after the modification $\omega$). By abuse of notation, we also  denote the exceptional divisors $E(S_{\al i})$ and $E(R)$ by  the same letter $S_{\al, i}$ and $R_\al$ for simplicity of the notations.
\begin{Proposition} (1)
$S_{\al i}$ has two components which are $\mathbb P^1$ for $1\le i\le m_0$ if $m=2m_0+1$
and for $1\le i\le m_0-1$ for $m=2m_0$. We can call each component as 
$S_{\al i}^{+},  S_{\al i}^{-}$ so that $S_{\al, i}^{\pm}\cdot S_{\al, i+1}^{\pm}=1$ 
and $S_{\al, i}^{\pm}\cdot S_{\al, i+1}^{\mp}=0$. 
\nl
(2) For $m=2m_0+1$, $R_\al$ is a rational sphere and $R_\al\cdot S_{\al, m_0}^\pm=1$ and $S_{\al,m_0}$ is a union of two rational spheres. For $m=2m_0$, $S_{\al,m_0}$ is connected and  a rational sphere.
\nl
(3) $P_j\cdot S_{\al 1}=2$. More precisely,
$P_1\cdot S_{\al 1}^{\pm}=1$ for $f_1$
and 
for $f_2$, $P_2=P_{2,3}+P_{2,1}$ and we have that 
$P_{2,3}\cdot S_{\al 1}^\pm=1, 0$ and $S_{\al 1}^\pm\cdot P_{2,1}=0,1$  respectively. 
\end{Proposition}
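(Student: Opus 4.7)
The plan is to exploit the common local model. After the change of variables described just before the proposition, both $\hat f_1$ and $\hat f_2$ reduce near $\rho_\al$ to $\hat f(\mathbf v)=u_3^4(w_1w_2+u_3^m)$, so the entire structure of the exceptional divisors $S_{\al,i},R_\al$ in parts (1) and (2) is determined by a single toric analysis of the strict transform $g(\mathbf v)=w_1w_2+u_3^m$ under $\omega_\al$. The distinction between $\be=1$ and $\be=2$ enters only in (3), through the global identification of $P_\be$ with the branches of $E(P)$ at $\rho_\al$.

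For each vertex $K$ of $\Si^*$ coming from $\omega_\al$ I compute the face function $g_K$. For $K=S_{\al,i}={}^t(i,i,1)$ with $2i<m$ (that is, $1\le i\le m_0$ in the odd case and $1\le i\le m_0-1$ in the even case), one has $\deg_K(w_1w_2)=2i<m=\deg_K(u_3^m)$, so $g_K=w_1w_2$; being a product of two coprime monomials, $E(S_{\al,i})=\hat E(S_{\al,i})\cap\tilde V_Y$ splits into two smooth rational components, labeled $S_{\al,i}^+$ (closure of $\{w_1=0\}$ in $\hat E(S_{\al,i})$) and $S_{\al,i}^-$ (closure of $\{w_2=0\}$). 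For $K=R_\al={}^t(m,m,2)$ when $m=2m_0+1$, or $K=S_{\al,m_0}={}^t(m_0,m_0,1)$ when $m=2m_0$, both monomials have the same weighted degree, so $g_K=w_1w_2+u_3^m$; after weighted rescaling its zero locus in the 2-torus of $\hat E(K)$ is parametrized by $w_1\mapsto (w_1,-1/w_1)$ and is hence $\cong\mathbb C^*$, whose projective closure in the toric compactification $\hat E(K)$ is a single smooth $\mathbb P^1$. This settles the component structure stated in (1) and (2).

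For the intersection numbers, I pass to the two unimodular charts $\tau=\Cone(S_{\al,i},S_{\al,i+1},e_1)$ and $\tau'=\Cone(S_{\al,i},S_{\al,i+1},e_2)$ supplied by Figure \ref{Figure1} (the determinants are $\pm1$ by inspection). In chart $\tau$ the monomial substitution and a $u$-factor extraction give
\[g = u_{\tau,1}^{2i}\,u_{\tau,2}^{2(i+1)}\bigl(u_{\tau,3}+u_{\tau,1}^{m-2i}u_{\tau,2}^{m-2(i+1)}\bigr),\]
so the strict transform $\tilde V_Y$ meets $\hat E(S_{\al,i})\cap\hat E(S_{\al,i+1})=\{u_{\tau,1}=u_{\tau,2}=0\}$ in exactly the single transverse point $(0,0,0)$. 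This witnesses one intersection of a matching pair of branches; the mirror chart $\tau'$ contributes the other, giving $S_{\al,i}^\pm\cdot S_{\al,i+1}^\pm=1$ and forcing the mixed products $S_{\al,i}^\pm\cdot S_{\al,i+1}^\mp$ to vanish by exhaustion of the intersection locus. The identical computation in the chart $\Cone(S_{\al,m_0},R_\al,e_1)$ (resp.\ $e_2$), unimodular because $|\det|=|m-2m_0|=1$, produces $R_\al\cdot S_{\al,m_0}^\pm=1$: the irreducible $R_\al$ meets each branch of $S_{\al,m_0}$ transversely at one point.

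Finally, for (3), the curve $E(P)$ is locally $\{w_1w_2=0\}$ at $\rho_\al$, with the two analytic branches $\{w_1=0\}$ and $\{w_2=0\}$. In the unimodular chart $\Cone(e_3,S_{\al,1},e_1)$ a direct substitution shows that the strict transform of $\{w_1=0\}$ is smooth and meets $S_{\al,1}^+$ transversely in a single point; the mirror chart handles the $\{w_2=0\}$ branch and $S_{\al,1}^-$. For $f_1$, both branches are analytic branches of the same irreducible quartic $E(P)$, and its normalization $P_1$ in $\tilde V_Y$ acquires two preimages of $\rho_\al$, one on each branch, yielding $P_1\cdot S_{\al,1}^+=P_1\cdot S_{\al,1}^-=1$. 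For $f_2$, the defining equalities $w_1=q_{2,3}(u_{\xi1},u_{\xi2},1)$ and $w_2=q_{2,1}(u_{\xi1},u_{\xi2},1)$ recorded above the proposition identify $\{w_1=0\}$ with the cubic component $P_{2,3}$ and $\{w_2=0\}$ with the line component $P_{2,1}$, which immediately gives $P_{2,3}\cdot S_{\al,1}^+=1$, $P_{2,3}\cdot S_{\al,1}^-=0$, $P_{2,1}\cdot S_{\al,1}^+=0$ and $P_{2,1}\cdot S_{\al,1}^-=1$. The main obstacle is purely a bookkeeping one: fixing the $\pm$-labeling consistently across the three singular points $\rho_1,\rho_2,\rho_3$ and along the entire chain $S_{\al,1},\ldots,S_{\al,m_0}$, which is settled once and for all by the convention $S_{\al,i}^+\leftrightarrow\{w_1=0\}$ used above.
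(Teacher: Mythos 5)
Your proposal is correct and is essentially the paper's own argument: you compute the pull-back of $\hat f$ in the unimodular charts of the same subdivisions $\Si_o^*,\Si_e^*$, read off the components of $E(S_{\al i})$, $E(R_\al)$ and their transverse intersections, and identify the two branches of $E(P)$ at $\rho_\al$ with $S_{\al 1}^{\pm}$ via $w_1,w_2$ just as the paper does. The only cosmetic differences are that you diagonalize the local form to $w_1w_2+u_3^m$ also for $f_1$, so each chart $\Cone(S_{\al i},S_{\al,i+1},e_1)$ exhibits one branch and the mirror chart with $e_2$ the other, whereas the paper keeps $v_{\al 1}^2+v_{\al 2}^2$ and sees both branches in a single chart as $u_{i,3}=\pm\sqrt{-1}$, and you replace the paper's Euler-characteristic count giving $\chi(E(R_\al))=2$ by a direct rational parametrization.
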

\begin{proof} As we are working on $Y_\al$ for $\al=1, 2$ simultaneously, we skip the suffix $\al$. 
First consider the toric chart $\si_i:=\Cone (S_{i},S_{ i+1},e_{1})$ for $0\le i\le m_0-1$.
which corresponds to the unimodular matrix
\[
\left(\begin{matrix}  i&i+1&1\\i&i+1&0\\1&1&0\end{matrix}
\right)
\]
We understand $S_0=e_3$ in the above notation.
Denote the coordinates  of $\mathbb C_{\si_i}^3$ as  $(u_{i, 1},u_{i,2},u_{i,3})$ and 
\[
v_{\al1}=u_{i, 1}^{i}u_{i,2}^{i+1}u_{i, 3},\, v_{\al2}=u_{i, 1}^{i}u_{i,2}^{i+1},\,u_3=u_{i,1}u_{i,2}
\]
and pull back of $\hat f$ by $\omega_\al$ for $m=2m_0+1$ is given as 
\begin{eqnarray}\label{sigma-i}
\omega_\al^*{\hat f}
&=&u_{i,1}^{2i+4}u_{i,2}^{2i+6}
\left(u_{i,3}^2+1+u_{i,1}^{m-2i}u_{i,2}^{m-2i-2}
\right),\, \, i<m_0-1.\\
\end{eqnarray}
The divisor $S_i$ and $S_{i+1}$  are  defined in this chart by $u_{i,1}=0$ and $u_{i,2}=0$ respectively and their two components $S_i^{\pm},\, S_{i+1}^{\pm}$ correspond to $u_{i,3}=\pm \sqrt{-1}$ respectively. 
In the chart  $\si_{m_0}=\Cone(S_{m_0}, R, e_1)$, 
\begin{eqnarray}\label{sigma-m_0}
\omega_\al^*{\hat f}&=&
u_{m_0,1}^{2m_0+4}u_{m_0,2}^{2m+8}\left\{u_{m_0,3}^2+1+u_{m_0,1}
\right\}.
\end{eqnarray} We can see $R$ is defined by $u_{m_0,2}=0$ and $R\cdot S_{m_0}=2$. This is described as $u_{m_0,1}=u_{m_0,3}^2+1=0$.
For an even $m$ with  $m=2m_0$, $\omega_\al^*{\hat f}$ is as above
(\ref{sigma-i}) for $i<m_0-1$ and for $i=m_0-1$,
the above equation takes the form:
\begin{eqnarray}\label{sigma-ii}
\omega_\al^*{\hat f}
&=&u_{m_0-1,1}^{2m_0+2}u_{m_0-1,2}^{2m_0+4}
\left(u_{m_0-1,3}^2+1+u_{m_0-1,1}^{2}
\right) \quad \text{if}\,\,m=2m_0.\notag
\end{eqnarray}
As $E(S_{m_0})=\{u_{m_0-1,2}=u_{m_0-1,3}^2+1+u_{m_0-1,1}^{2}=0\}$ if $m=2m_0$, we see that it is connected and a rational curve. Other part, the argument for $m$ odd or  even is exactly the same, and we do the argument for $m=2m_0+1$ hereafter.

In the chart $\mathbb C_{\si_i}^2$, $\hat E(S_{i})$ and $\hat E(S_{i+1})$ is defined by $u_{i,1}=0$ and $u_{i,2}=0$ respectively
and two components are $u_{i,1}=u_{i,3}\pm\sqrt{-1}=0$.
We define $S_{i}^+=\{u_{i,1}=u_{i,3}+\sqrt{-1}=0\}$ and $S_{i}^-=\{u_{i,1}=u_{i,3}-\sqrt{-1}=0\}$.
In the next chart $\si_{i+1}=\Cone\,(S_{i+1},S_{i+2},e_1)$ with coordinate
$(u_{i+1, 1},u_{i+1,2},u_{i+1,3})$, they are related by
\[u_{i+1,3}=u_{i,3},\quad u_{i,1}=u_{i+1,2}\inv,\, u_{i,2}=u_{\i+1,1}u_{i+1,2}^2.
\]
Thus we see $S_{i}^{\pm}\cdot S_{i+1}^{\pm}=1$.
For  $f_2$, in the chart $\si_0$, we have
\[
w_1=v_1+\sqrt{-1}v_2=u_{0,2}(u_{0,3}+\sqrt{-1}),
\quad w_2=v_1-\sqrt{-1}v_2=u_{0,2}(u_{0,3}-\sqrt{-1})
\]
where  $\si_0=\Cone\,(e_3,S_1,e_1)$ and
$v_1=u_{0,2}u_{0,3},v_2=u_{0,2},u_3=u_{0,1}u_{0,2}$. 
(Recall $w_1=0$ is the defining function of $P_{2,3}$ and $w_2=0$ defines $P_{2,1}$.)
This implies $S_{0}^+=P_{2,3}$ and $S_{0}^-=P_{2,1}$ as is desired.
Now we consider the last chart $\tau=\Cone\,(S_{m_0},R,e_1)$ with coordinates
$(u_{\tau1},u_{\tau 2},u_{\tau 3})$ (here $m=2m_0+1$) and 
\[
\begin{split}
&v_1=u_{\tau1}^{m_0}u_{\tau2}^m u_{\tau3},\quad v_2=u_{\tau1}^{m_0}u_{\tau2}^m,\quad
u_3=u_{\tau1} u_{\tau2}^2\\
&\omega^*{\hat f}=u_{\tau1}^{m}u_{\tau2}^{2m+8}(u_{\tau 3}^2+1+u_{\tau1})
\end{split}
\]
We see $\chi(E(R)^*)=-2$ and $\chi(E(R)\cap E(S_{m_0}^\pm))=1$ and 
$\chi(E(R)\cap E(e_1))=\chi(\hat E(R)\cap \hat E(e_2))=1$. Thus $\chi(E(R))=2$.
\end{proof}
\subsection{Resolution graph of $V_1, V_2$}\label{ResolutionV1V2}
Now we come to the crucial part.  Let $\Ga_\be$ be the dual resolution graph of $\Pi|_{\tilde V_\be}\, \tilde V_\be\to V_\be,\be=1,2$. Here $V_\be=V(f_\be),\,\be=1,2$.

First we consider $V_1$. $\Ga_1$ has $6m_0+4$ vertices corresponding to
\[
P_1, S_{\al 1}^\pm,\dots, S_{\al m_0}^\pm, R_\al,\,\al=1,2,3
\]
 and  $\Ga_1$ is three cycle graph centered at $P_1$. For each $\al$, one cycle centered at $P_1$ is this:
 \[
 \overset {P_1} \bullet\rule[1mm]{.5cm}{0.3mm} \overset {S_{\al1}^+} \bullet\rule[1mm]{.5cm}{0.3mm}  \dots\rule[1mm]{.5cm}{0.3mm} \overset {S_{\al m_0}^+} \bullet\rule[1mm]{.5cm}{0.3mm} \overset{R_\al} \bullet \rule[1mm]{.5cm}{0.3mm}\overset {S_{\al m_0}^- }\bullet \rule[1mm]{.5cm}{0.3mm} \dots\rule[1mm]{.5cm}{0.3mm} 
\rule[1mm]{.5cm}{0.3mm}   \overset {S_{\al 1}^-} \bullet\rule[1mm]{.5cm}{0.3mm}
\overset {P_1}\bullet
\]
Figure \ref{Figure2} show the graph $\Ga_1$ for $m=3$.

Now we consider the case $f_2$. The  central divisor $E(P)$ split into two vertices corresponding to $P_{2,3}$ and $P_{2,1}$ and $\Ga_2$ has  $6m_0+5$ vertices. There are three trees  from $P_{2,3}$ to $P_{2,1}$.   See  Figure \ref{Figure3}.
\[
 \overset {P_{2,3}} \bullet\rule[1mm]{.5cm}{0.3mm} \overset {S_{\al 1}^+} \bullet\rule[1mm]{.5cm}{0.3mm}  \dots\rule[1mm]{.5cm}{0.3mm} \overset {S_{\al m_0}^+} \bullet\rule[1mm]{.5cm}{0.3mm} \overset{R_\al} \bullet \rule[1mm]{.5cm}{0.3mm}\overset {S_{\al m_0}^- }\bullet \rule[1mm]{.5cm}{0.3mm} \dots\rule[1mm]{.5cm}{0.3mm} 
\rule[1mm]{.5cm}{0.3mm}   \overset {S_{\al1}^-} \bullet\rule[1mm]{.5cm}{0.3mm}
\overset {P_{2,1}} \bullet
\]
\begin{figure}[htb]  
\setlength{\unitlength}{1bp}
\begin{picture}(600,300)(-100,-100)
{\includegraphics[width=17cm, bb=0 0 595 842]{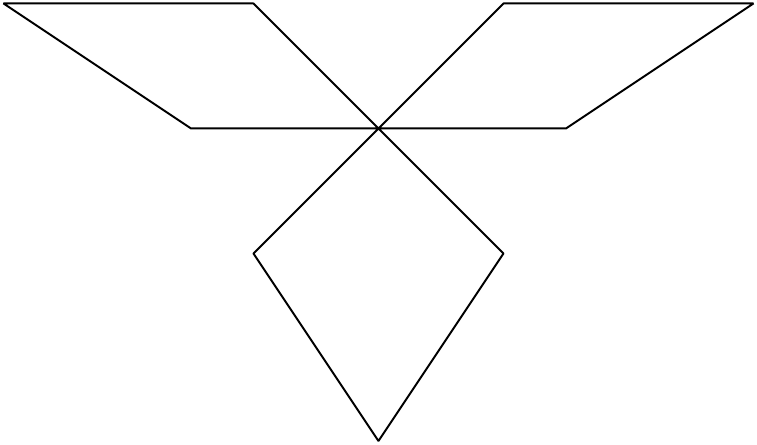}}
\put(-315,110){$R_1$}
\put(-305,102){\circle*{7}}
\put(-384,110){$S_{1,1}^+$}
\put(-365,102){\circle*{7}}
\put(-397,82){$P_1$}
\put(-393,74){\circle*{7}}
\put(-348,65){$S_{1,1}^-$}
\put(-350,74){\circle*{7}}
\put(-356,44){$S_{2,1}^+$}
\put(-365,44){\circle*{7}}
\put(-387,0){$R_2$}
\put(-394,0){\circle*{7}}
\put(-445,44){$S_{2,1}^-$}
\put(-423,44){\circle*{7}}
\put(-445,62){$S_{3,1}^+$}
\put(-437,76){\circle*{7}}
\put(-485,110){$R_3$}
\put(-480,102){\circle*{7}}
\put(-430,110){$S_{3,1}^-$}
\put(-423,102){\circle*{7}}
\end{picture}
\vspace{-3cm}
\caption{Graph $\Ga_1,\,m=3$}\label{Figure2}
\end{figure}
\begin{figure}[htb]  
\setlength{\unitlength}{1bp}
\begin{picture}(600,300)(-100,-100)
{\includegraphics[width=18cm, bb=0 0 595 842]{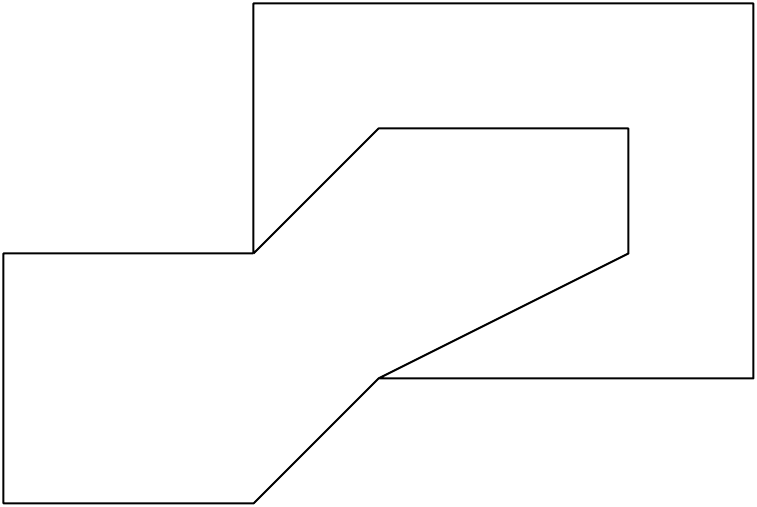}}
\put(-325,130){$R_1$}
\put(-325,125){\circle*{7}}
\put(-325,20){$S_{1,1}^-$}
\put(-325,32){\circle*{7}}
\put(-364,100){$R_2$}
\put(-356,94){\circle*{7}}
\put(-420,100){$S_{2,1}^+$}
\put(-418,94){\circle*{7}}
\put(-348,65){$S_{2,1}^-$}
\put(-354,62){\circle*{7}}
\put(-438,35){$P_{2,1}$}
\put(-417,32){\circle*{7}}
\put(-468,67){$P_{2,3}$}
\put(-447,63){\circle*{7}}
\put(-463,10){$S_{3,1}^-$}
\put(-446,2){\circle*{7}}
\put(-525,70){$S_{3,1}^+$}
\put(-510,55){\circle*{7}}
\put(-525,0){$R_3$}
\put(-508,2){\circle*{7}}
\put(-462,132){$S_{1,1}^+$}
\put(-448,124){\circle*{7}}
\end{picture}
\vspace{-3cm}
\caption{Graph $\Ga_2,\,m=3$}\label{Figure3}
\end{figure}
\newpage
\subsection{Two links are not diffeomorphic}
The calculation in the previous subsection shows the following important theorem.
Let $K_{f_i}:=V_{f_i}\cap S_\eps^{5}$ for $i=1,2$. By plumbing argument,
$K_{f_i}$ is diffeomorphic to  the boundary of the tubular neighborhood of exceptional divisors (\cite{Mum,Hirz}).
\begin{Theorem}\label{main2}
The first Betti numbers of the links $K_{f_1}$ and $K_{f_2}$ are given as
$3$ and $4$ respectively. In particular, $K_{f_1}$ and $K_{f_2}$ are not homeomorphic.
\end{Theorem}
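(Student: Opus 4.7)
The plan is to compute $b_1(K_{f_i})$ directly from the resolution graph $\Gamma_i$ built in \S\ref{ResolutionV1V2} via the standard plumbing formula, and then to note that different first Betti numbers already force the two 3-manifolds to be non-homotopy-equivalent. By the plumbing realization \cite{Mum,Hirz}, the link $K_{f_i}$ is diffeomorphic to the boundary of a tubular neighborhood of the full exceptional divisor of $\Pi: Y\to \mathbb{C}^3$. Since all exceptional divisors are smooth compact complex curves and the intersection matrix of the resolution of an isolated normal surface singularity is negative definite (in particular non-degenerate), one has the classical identity
\[
b_1(K_{f_i}) \;=\; 2\sum_{v} g(v) \;+\; b_1(\Gamma_i),
\]
where $g(v)$ is the genus of the exceptional curve at the vertex $v$ and $b_1(\Gamma_i)$ is the first Betti number of the underlying topological graph.

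For $i=1$, every vertex of $\Gamma_1$ has genus $0$: the toric divisors $S_{\alpha,j}^{\pm}$ and $R_\alpha$ are rational by the explicit toric coordinate computations above, and $P_1$ is the normalization of $E(P)$, a plane quartic with three $A_1$-singularities, whose geometric genus equals $\binom{3}{2}-3=0$. The graph $\Gamma_1$ is three cycles wedged at the single vertex $P_1$, so a straightforward Euler-characteristic count $E-V+1=(6m_0{+}6)-(6m_0{+}4)+1$ gives $b_1(\Gamma_1)=3$. Hence $b_1(K_{f_1})=3$.

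For $i=2$, the central divisor $E(P)$ splits into the strict transform $P_{2,3}$ of the smooth plane cubic component and the strict transform $P_{2,1}$ of the line. A smooth plane cubic has geometric genus $1$, and neither $\hat\pi$ nor $\omega$ blows up a smooth point of the cubic, so $g(P_{2,3})=1$; all remaining vertices ($P_{2,1}$ and the toric divisors) are rational. The graph $\Gamma_2$ consists of the two distinguished vertices $P_{2,3}$ and $P_{2,1}$ joined by three internally disjoint paths (one per $\alpha$), so $b_1(\Gamma_2)=3-1=2$. Therefore $b_1(K_{f_2})=2\cdot 1 + 2 = 4$, and $b_1(K_{f_1})=3\ne 4 = b_1(K_{f_2})$ shows that the two links are not even homotopy equivalent, let alone homeomorphic.

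The main conceptual point of the argument is that the genus jump $g(P_{2,3})=1$ is exactly what distinguishes the two cases: the weak Zariski partner $Q_2$ contains a smooth cubic of genus $1$, whereas $Q_1$ is an irreducible nodal quartic whose geometric genus vanishes. The remaining work is combinatorial bookkeeping on the two graphs drawn in Figures \ref{Figure2}--\ref{Figure3}, together with a verification that the resolution map $\omega$ constructed in \S\ref{ResolutionV1V2} is a good resolution whose exceptional locus has negative-definite intersection form, so that the stated plumbing formula for $b_1$ is available.
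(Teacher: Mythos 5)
Your proof is correct and takes essentially the same route as the paper: the plumbing formula $b_1(K)=2g_{tot}+r$ (Hirzebruch/Mumford, with negative definiteness guaranteeing no extra contribution) applied to the dual graphs $\Gamma_1,\Gamma_2$ of \S\ref{ResolutionV1V2}, the decisive point being that $P_{2,3}$ has genus $1$ while for $f_1$ all exceptional curves are rational and the graph carries three independent cycles. One minor inaccuracy that does not affect the conclusion: the centers $\rho_\alpha$ of $\omega$ do lie on the strict transform of the cubic, but modifying the ambient space at smooth points of a smooth curve leaves its strict transform a smooth genus-one curve, so $g(P_{2,3})=1$ still holds.
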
 
\begin{proof}
The Betti number of the link is  given as $2g_{tot}+r$ where $r$ is 
number of independent cycles in the graph  and $g_{tot}$ is the sum of the genera of the exceptional divisors (see (2),\cite{Hirz}). Now the assertion is immediate from the graph $\Ga_1,\Ga_2$ and the observation that  $P_{2,3}$ is the unique non-rational divisor with
$g(P_{2,3})=1$.
\end{proof}
\subsubsection{Wang sequence and Jordan block}
As far as we know, this is a first example of a pair of links with the same zeta function and non-diffeomorphic links.
Recall the Wang sequence of the Milnor fibration {\cite{Milnor}):
\[
0\to H_3(S_\eps^{5}\setminus K_{f_i})\to H_2(F_i)\mapright{h_*-\id} H_2(F_i)\to H_2(S_\eps^{5}\setminus K_{f_i})\to 0.
\]
Here $F_i$ is the Minor fiber of $f_i$.
By Alexander duality, $H^1(K_{f_i})\cong H_3(S_\eps^{5}\setminus K_{f_i})\cong \Ker\,(h_*-\id)$.
Theorem \ref{main2} says that the monodromy mappings for $f_1$ and $f_2$ have different Jordan
blocks on the second homology of the Milnor fiber, though their characteristic polynomials are 
given by 
\nl
$(1-t^4)^{3}(1-t^{4+m})^{4}(1-t)^{-1}$ and therefore
  the multiplicity of eigenvalue $1$ for $h_*: H_2(F)\to H_2(F)$ is 6 for both of $f_1,f_2$. 
  However  the number of Jordan blocks of eigenvalue 1 is 3 and 4 respectively for $f_1$ and $f_2$
  by Theorem \ref{main2}.
  \subsection{Intersection numbers and dual resolution graphs}\label{intersection-number}
  To compute the self-intersection numbers, we consider the divisor of pull-back  function $\Pi^*x$. For simplicity, we consider the case $m=2m_0+1$ and $m_0\ge 1$.
  By (\ref{int1}) and the center of $\omega_i$ does not intersect with the coordinate plane $u_{\xi1}=0$, we get 
  \begin{eqnarray}
  (\Pi^*x)&=&(u_{\xi1})+P+\sum_{\al=1}^3 \sum_{i=1}^{m_0}S_{\al i}+2R_\al,\quad \text{for}\,\,f_1,\, f_2\label{int2}\\
  &=&(u_{\xi1})+P+\sum_{\al=1}^4 \sum_{i=1}^{m_0}S_{\al i}+2R_\al,\quad \text{for}\,\,f_3,\, f_4\label{int3}
  \end{eqnarray}
  Here $S_{\al i}=S_{\al i}^++S_{\al i}^-$,  and $P$ it is equal to $P_1$ for $f_1$ and $P_{2,3}+P_{2,1}$   for $f_2$ and  $P=P_{3,3}+P_{3,1}$ for $f_3$,  and $P=P_{4,2}+P_{4,2}'$ for $f_4$.
  Note that the genus of $P_{2,3}$ is 1 for $f_2$ but $P_{3,3}$ is a normalization of a nodal cubic $c_3^{ (1)}=0$ and it is  rational.
  $P_{4,2}, P_{4,2}'$ are smooth conics.
  Using the property that 
  $(\Pi^*x)\cdot C=0$ for a compact exceptional divisor $C\subset Y$
   (see for example Theorem 2.6, \cite{La}), we get for $\Ga_1$ and $\Ga_2$:
  \begin{eqnarray*}
  &P_1^2=-10,\quad &{S_{\al i}^\pm}^2=-2,\quad R_\al^2=-1,\quad\text{for}\,\,f_1\\
  &P_{2,3}^2=-6, P_{2,1}^2=-4,& {S_{\al i}^\pm}^2=-2,\quad  R_\al^2=-1,\quad\text{for}\,\,f_2
  \end{eqnarray*}
  and for the resolution of $f_3, f_4$, exceptional divisors are all rational and:
  \begin{eqnarray*}
  P_{3,3}^2=-8&,\quad P_{3,1}^2=-4,\quad& {S_{\al i}^\pm}^2=-2,\quad  R_\al^2=-1,\quad\text{for}\,\,f_3\\
  P_{4,2}^2=-6&,\quad {P_{4,2}'}^2=-6,\quad &{S_{\al i}^\pm}^2=-2,\quad  R_\al^2=-1,\quad\text{for}\,\,f_4\\
  \end{eqnarray*}
  We have used the following  equality for the calculation:
  \begin{eqnarray}\label{int1}
 (u_{\xi 1})\cdot E(P)=4
 \end{eqnarray}
 as $E(P)\cap (u_{\xi 1})$ corresponds to the roots of $q_i(0,u_{\xi2},1)=0$ for each $i=1,\dots, 4$. 
\subsubsection{Remarks on the pair $\{f_3,f_4\}$}
The calculation of the links $K_{f_3}$ and $K_{f_4}$ are similar. We only give few remarks and leave the detail to the reader. Put $V_j=f_j\inv(0)$  for $j=3,4$. Let $\Ga_3,\Ga_4$ be the dual resolution graphs for $V_3$ and $V_4$ respectively.
We put for $V_3$,  $P_{3,3}\cap P_{3,1}=\{\rho_1,\rho_2,\rho_3\}$ and $\rho_4$ is the inner singularity of $P_{3,3}$.
$P_{3,3}$ is the normalization of the nodal cubic component and $P_{3,1}$ is the line component.
For $V_4$, $P_{4,2}\cap P_{4,2}'=\{\rho_1,\dots,\rho_4\}$
where $P_{4,2}, P_{4,2}'$ are conics.
\nl
(1) After the first blowing up $\hat \pi:X\to \mathbb C^3$, the exceptional divisor $ E(P)$ for $V_3$ is a union of cubic $P_{3,3}$ with one node and a line $P_{3,1}$ where  $P_{3,3}$ and $P_{3,1}$ intersects transversely. For $V_4$, $E(P)$ is two smooth conics $P_{4,2}$ and $P_{4,2}'$. They intersect transversely.
The modifications at 4$A_1$ singularities are exactly same as those in the previous section using the same regular simplicial cone subdivision $\Si^*$. For simplicity, we explain the outline assuming $m=2m_0+1$. We use the regular simplicial cone subdivision $\Si_o^*$  as before for the toric modification at each singular points.
\nl
(2) For three $A_1$ singularities $\rho_1,\rho_2, \rho_3$ of $E(P)$ are located at the intersection of two components
$P_{3,3}\cap P_{3,1}$ for $V_{f_3}$
or $P_{4,2}\cap P_{4,2}'$ for $V_{f_4}$, we take the toric modification $\omega_\al:Y_\al\to X$. Two divisors $P_{3,3}$ and $P_{3,1}$ or $P_{4,2}$ and $P_{4,2}'$ are separated by $\omega$ and  it gives  a tree of exceptional divisors  from $P_{3,3}$ to  $P_{3,1}$ or from $P_{4,2}$ to $P_{4,2}'$ respectively:
\[
 \overset {P_{3,3}}{\underset{P_{4,2}}{}} \bullet\rule[1mm]{.5cm}{0.3mm} \overset {S_{\al 1}^+} \bullet\rule[1mm]{.5cm}{0.3mm}  \dots\rule[1mm]{.5cm}{0.3mm} \overset {S_{\al m_0}^+} \bullet\rule[1mm]{.5cm}{0.3mm} \overset{R_\al} \bullet \rule[1mm]{.5cm}{0.3mm}\overset {S_{\al m_0}^- }\bullet \rule[1mm]{.5cm}{0.3mm} \dots\rule[1mm]{.5cm}{0.3mm} 
\rule[1mm]{.5cm}{0.3mm}   \overset {S_{\al1}^-} \bullet\rule[1mm]{.5cm}{0.3mm}
\bullet\overset{P_{3,1}}{\underset{P_{4,2}'} {}}
\]
(3) The last $A_1$ singularity $\rho_4$  is an inner singularity of $P_{3,3}$ for $V_3$ and the fourth intersection of two conics for $V_4$. Thus after a toric modification $\omega_\al:Y_4\to X$,
$\Ga_3$ get a closed chain at $P_{3,3}$. For $\Ga_4$, it is a same pass as in (2).
The resolution graphs  are given in Figure \ref{Figure4}.
Two graphs have same number of independent cycles, 3.
All exceptional divisors are $\mathbb P^1$ and the number of independent cycles is 3 for $\Ga_3$ and $\Ga_4$. ($P_{3,3}$ is rational as it has one node.)
Thus $K_{f_3}$ and $K_{f_4}$ has  the first Betti number 3. However their 
graphs are not isomorphic (even as  weighted graphs).
To show that two links $K_{f_3}$ and $K_{f_4}$ are not diffeomorphic, we can use Theorem 3.2, \cite{Neumann}.
\begin{figure}[htb]  
\setlength{\unitlength}{1bp}
\begin{picture}(600,300)(-0,-100)
{\includegraphics[width=12cm, bb=0 0 362 200]{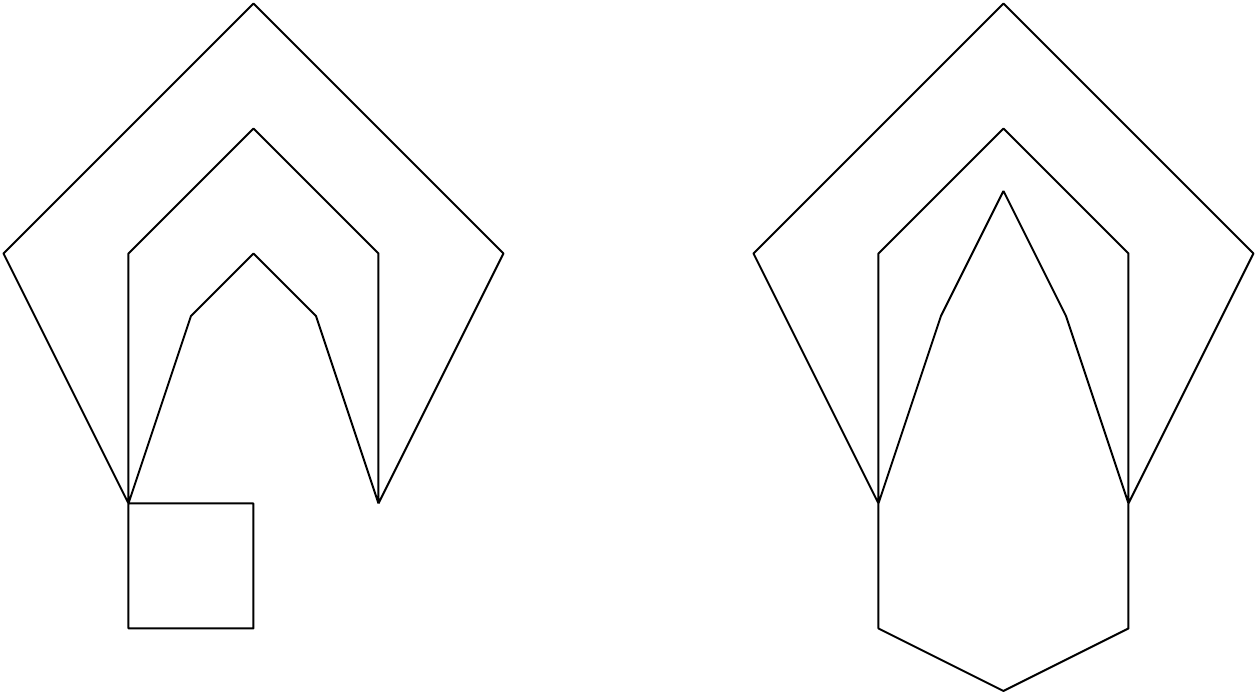}}
\put(-350,130){$S_{1,1}^+$}
\put(-340,120){\circle*{5}}
\put(-270,190){$R_1$}
\put(-270,186){\circle*{5}}
\put(-71,190){$R_1$}
\put(-68,187){\circle*{5}}
\put(-274,160){$R_2$}
\put(-271,153){\circle*{5}}
\put(-71,160){$R_2$}
\put(-69,152){\circle*{5}}
\put(-71,138){$R_3$}
\put(-68,135){\circle*{5}}
\put(-274,123){$R_3$}

\put(-272,118){\circle*{5}}
\put(-296,110){$S_{3,1}^+$}
\put(-288,102){\circle*{5}}
\put(-260,110){$S_{3,1}^-$}
\put(-254,102){\circle*{5}}
\put(-98,110){$S_{3,1}^+$}
\put(-86,100){\circle*{5}}
\put(-60,110){$S_{3,1}^-$}
\put(-52,102){\circle*{5}}
\put(-235,120){$S_{2,1}^-$}
\put(-238,119){\circle*{5}}
\put(-212,130){$S_{1,1}^-$}
\put(-205,120){\circle*{5}}
\put(-156,120){$S_{1,1}^+$}
\put(-136,120){\circle*{5}}
\put(-124,120){$S_{2,1}^+$}
\put(-102,120){\circle*{5}}
\put(-29,120){$S_{2,1}^-$}
\put(-36,120){\circle*{5}}
\put(0,120){$S_{1,1}^-$}
\put(-1,118){\circle*{5}}
\put(-320,127){$S_{2,1}^+$}
\put(-306,120){\circle*{5}}
\put(-326,52){$P_{3,3}$}
\put(-305,52){\circle*{5}}
\put(-330,22){$S_{4,1}^-$}
\put(-306,18){\circle*{5}}
\put(-230,52){$P_{3,1}$}
\put(-237,53){\circle*{5}}
\put(-282,60){$S_{4,1}^+$}
\put(-272,52){\circle*{5}}
\put(-270,20){$R_4$}
\put(-271,18){\circle*{5}}
\put(-124,50){$P_{4,2}$}
\put(-102,52){\circle*{5}}
\put(-60,50){$P_{4,2}'$}
\put(-35,52){\circle*{5}}
\put(-124,20){$S_{4,1}^+$}
\put(-102,18){\circle*{5}}
\put(-60,20){$S_{4,1}^-$}
\put(-34,19){\circle*{5}}
\put(-71,10){$R_4$}
\put(-69,2){\circle*{5}}
\put(-270,-25){$\Ga_3$}
\put(-71,-25){$\Ga_4$}
\end{picture}
\vspace{-3cm}
\caption{Graphs $\Ga_{3},\,\Ga_4$}\label{Figure4}
\end{figure}

\subsection{Link pairs constructed from Zariski pairs}
Consider a Zariski pair of projective curves 
$C: f_d(x,y,z)=0$ and $C': g_d(x,y,z)=0$ of degree $d$ with simple singularities. We assume thet $C,C'$ are irreducible for simplicity.
Consider the isolation 
\[\begin{split}
f(x,y,z):=f_d(x,y,z)+z^{d+m}\\
g(x,y,z):=g_d(x,y,z)+z^{d+m}.
\end{split}
\]
We assume $f$ and $g$ are convenient polynomial as before.
Take the simplest toric modification $\hat\pi:X\to \mathbb C^3$ and 
$\hat \pi':X'\to  \mathbb C^3$ with  $\Si^*$ with 4 vertices  $\{e_1,e_2,e_3,P\}$ with $P={}^t(1,1,1)$ as before. In the toric coordinates $\si=\Cone\,(e_1,e_2,P)$,
$\hat f$ and $\hat {g}$ are written as 
\[
\begin{split}
\hat f(\mathbf u_\si)&=u_{\si 3}^d(f_d(u_{\si 1},u_{\si 2},1)+u_{\si 3}^{m}),\\
\hat g(\mathbf u_\si)&=u_{\si 3}^d(g_d(u_{\si 1},u_{\si 2},1)+u_{\si 3}^{m})
\end{split}
\]

 Let $\rho_1,\dots, \rho_s$ be the singular points of $E(P)$. Then choose an admissible coordinates system  $\mathbf w_i=(w_{i,1},w_{i,2},w_{i,3})$ for each $i=1,\dots, s$
 with $w_{i,3}=u_{\si 3}$. As $E(P)$ is projective space $\mathbb P^2$ and 
 $f_d(u_{\si 1},u_{\si 2},1)=0$ and $g_d(u_{\si 1},u_{\si 2},1)=0$ is the affine equation of the projective curve $C, C'$ respectively,
 we may assume that $\hat f$ and $\hat g$ take the exact same polynomial expression at each $\rho_i$.
 That is, 
 $\hat f, \hat g$ take the form
 \[
 \hat f(\mathbf w), \hat g(\mathbf w)=u_{\si 3}^d\left(\psi_i(w_1,w_2)+
 u_{\si 3}^{m}
 \right)
 \]
 where $\psi_i(w_1,w_2)$ is  a fixed normal form of the simple singularity $(E(P),\rho_i)=(C,\rho_i)$ at $\rho_i$.
  We proceed  the further  toric modifications
 at each $\rho_i$, $\omega_i: Y_i\to X$ or $\omega_i': Y_i'\to X'$  with respect to the same regular simplicial cone subdivision $\Si_i^*$.
 Let $\Pi: Y\to \mathbb C^3$  and $\Pi': Y'\to \mathbb C^3$ be the resolution of $f$ and $g$ obtained by composing these toric modification with $\hat \pi$ as before.
 In this way two surface singularities get the exact same configuration of the exceptional divisors $E_{i,1},\dots, E_{i,r_i}$. Here we are abusing notations $E_{i,j}$'s which are  exceptional divisors of either  $\Pi$ or of $\Pi'$. Thus two hypersurface $V(f)$ and $V(g)$ have the exact same dual
 resolution graph. The assumption that $\{C,C'\}$ is a Zariski pair of irreducible curves implies after the resolution $\Pi$ and $\Pi'$, 
 the central divisor $E(P)$ has the same genus for $f$ and $g$. (This was not the case for a weak Zariski pair.)
 We can compute intersection numbers of exceptional divisors using  the divisor
 $(\Pi^*x)$ as in \S \ref{intersection-number} and use the property that $(\Pi^*x)\cdot E=0$ for any compact divisor $E$ in $Y$. 
 It is easy to see that  $(\Pi^*x)$ has the exactly same expression  for $V(f)$ and $V(g)$.
 Thus as the link 3-manifolds $K_f$ and $K_{g}$ can be considered as  the graph manifolds, we have
 \begin{Theorem} \label{main3}
 Assume that $\{C,C'\}$ is a Zariski pair of irreducible curves with simple singularities.  The two links $K_f$ and $K_{g}$  are diffeomorphic.
 \end{Theorem}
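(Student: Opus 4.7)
My plan is to show that the weighted dual resolution graphs of $V(f)$ and $V(g)$ are isomorphic, and then invoke the standard fact that the link of an isolated normal surface singularity is determined up to diffeomorphism as a graph manifold by its weighted dual resolution graph (plumbing description, cf.\ Mumford, Neumann, Hirzebruch). Since the excerpt has already essentially produced both resolutions by the same procedure, the task is to verify carefully that the two resolution data coincide.

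First, I would take the simplest toric modification $\hat\pi:X\to\mathbb C^3$ (respectively $\hat\pi':X'\to\mathbb C^3$) associated with the admissible subdivision $\Si^*$ whose only positive vertex is $P={}^t(1,1,1)$, in both cases using the same coordinate chart $\si=\Cone(e_1,e_2,P)$. Because $f$ and $g$ both have the form $(\text{degree }d\text{ form})+z^{d+m}$ with the same monomial $z^{d+m}$, the pull-backs are
\[
\hat f(\mathbf u_\si)=u_{\si3}^d\bigl(f_d(u_{\si1},u_{\si2},1)+u_{\si3}^m\bigr),\qquad \hat g(\mathbf u_\si)=u_{\si3}^d\bigl(g_d(u_{\si1},u_{\si2},1)+u_{\si3}^m\bigr).
\]
The exceptional divisor $\hat E(P)\cong\mathbb P^2$ is the same for both and contains the affine curves $C$ and $C'$ respectively. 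The hypothesis that $\{C,C'\}$ is a Zariski pair of irreducible curves forces a bijection $\phi:S(C)\to S(C')$ preserving the topological (hence, for simple singularities, analytic) type of each local singularity.

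Next, at each $\rho_i\in S(C)$ and its partner $\phi(\rho_i)\in S(C')$, I would choose admissible local coordinates $(w_{i,1},w_{i,2},w_{i,3})$ with $w_{i,3}=u_{\si3}$ such that both $f_d(u_{\si1},u_{\si2},1)$ and $g_d(u_{\si1},u_{\si2},1)$ are brought to the \emph{same} fixed normal form $\psi_i(w_{i,1},w_{i,2})$ of the simple singularity in question. This is where the Zariski (as opposed to only weak Zariski) hypothesis is crucial: simple singularities admit unique analytic normal forms, so the two local equations become literally identical:
\[
\hat f(\mathbf w_i)=\hat g(\mathbf w_i)=u_{\si3}^d\bigl(\psi_i(w_{i,1},w_{i,2})+u_{\si3}^m\bigr).
\]
I would then perform the same further toric modification $\omega_i:Y_i\to X$ at each $\rho_i$, based on the same regular simplicial subdivision $\Si_i^*$ of the local dual Newton diagram (which is intrinsic to the polynomial above and hence coincides for $f$ and $g$). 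Composing gives good resolutions $\Pi:Y\to\mathbb C^3$ and $\Pi':Y'\to\mathbb C^3$ whose exceptional configurations over each $\rho_i$ are identical tree-of-rational-curves patterns, produced by identical local data.

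It then remains to compare the central divisor $E(P)$ and to compute self-intersection numbers. Because $C$ and $C'$ are both irreducible of degree $d$ and the partial resolutions $\tilde C\to C$, $\tilde C'\to C'$ of the simple singularities contribute the same genus drops, the normalizations of $E(P)$ in $Y$ and $Y'$ have the same genus; call the corresponding divisor $P$ in both graphs. The adjacencies of $P$ to the trees coming from each $\rho_i$ are the same since the local modifications are the same. To get self-intersections, I would, exactly as in \S\ref{intersection-number}, use the auxiliary divisor $(\Pi^*x)$ and the relation $(\Pi^*x)\cdot E=0$ for every compact exceptional curve $E$. Since $\Pi^*x$ has the same expression
\[
(\Pi^*x)=(u_{\si1})+P+\sum_i\sum_{j}n_{ij}E_{ij}
\]
for both $f$ and $g$ (the multiplicities $n_{ij}$ depend only on the subdivision $\Si_i^*$, and $(u_{\si1})\cdot E(P)=d$ in both cases), every self-intersection number computed from the system of linear equations is identical. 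Therefore the weighted dual resolution graphs of $V(f)$ and $V(g)$ coincide.

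The expected main obstacle is the step of making the two local normal forms literally coincide: one must be sure that simple singularities of plane curves are rigid enough that an analytic coordinate change in $(w_1,w_2)$ alone (with $w_3=u_{\si3}$ preserved) converts $f_d(u_{\si1},u_{\si2},1)$ and $g_d(u_{\si1},u_{\si2},1)$ to the same $\psi_i$, so that the subsequent toric construction is genuinely identical. Once that is in hand, the conclusion is immediate: by the Mumford--Neumann plumbing calculus, the link 3-manifolds $K_f$ and $K_g$, being boundaries of regular neighborhoods of isomorphic weighted plumbing graphs, are diffeomorphic.
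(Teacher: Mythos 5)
Your proposal follows essentially the same route as the paper's own argument: the first toric modification at $P={}^t(1,1,1)$, reduction of the local equations at the singular points of $E(P)$ to a common normal form $u_{\si 3}^d(\psi_i(w_1,w_2)+u_{\si 3}^m)$ via admissible coordinates, identical further toric modifications $\omega_i$, the genus comparison of the central divisor using irreducibility, the self-intersection computation from $(\Pi^*x)\cdot E=0$, and the plumbing/graph-manifold conclusion. The proposal is correct and matches the paper's proof in both structure and detail.
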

In \cite{Almost}, we gave an example of such links. Though two links are diffeomorphic, we do not know if the diffeomorphism can be extended to a diffeomorphism of $S_\eps^{5}$ or not.
\begin{Remark}{\rm
Theorem \ref{main3} also valid for non-irreducible Zariski pairs. The argument is exactly same. The simple singularities assumption  can be replaced by  Newton no-degeneracy of  singularities.
The assumption that $\{C,C'\}$ is a Zariski pair is crucial, because otherwise, the geometry of the central divisor $E(P)$ in $Y$ and $Y'$ are different as we have seen in the case of weak Zariski pair.}
\end{Remark}

\subsection{Appendix}
\label{Appendix}
Starting from weak Zariski pairs, we can construct many other examples with non-diffeomorphic  links. We give two examples. We leave the detail for the reader. More interesting  problem is: {\em Are they 
$\mu$-Zariski pairs of links ?}

Consider irreducible projective curve of degree $d$ with $k$ $A_1$'s singularities and note it as $C_d^{(k)}$.
Note that the genus of the normalization of $C_d^{(k)}$  is $\frac{(d-1)(d-2)}2-k$.
We denote by $r$ the number of independent cycles in the resolution graph and by $g_{tot}$  the sum of the genus of the exceptional divisors. Put 
$b_1=r+2g_{tot}$,  which is the Betti number of the link. The calculation can be done in exact same say as our examples $f_1,f_2,f_3, f_4$.

Example 1. Consider the pair of sextic curves $\{C_{6,1}, C_{6,2}\}$ with $9$ $A_1$ singularities where $C_{6,1}=C_6^{(9)}$ (9 nodal sextic) and $C_{6,2}=C_3^{(0)}+C_3^{(0)}$, two generic cubics. We assume that irreducible components are intersecting transversely.
Let $f_{6,i}(x,y,z)$ be the defining homogeneous polynomials. We always assume that $f_{6,i}$ is convenient.
We consider the isolation surfaces
\[
V_i: g_{i}(x,y,z):=f_{6,i}(x,y,z)+z^{6+m}=0,\, i=1,2, \, m\ge 1.
\]
 The resolution is given exactly as 
\S \ref{ResolutionV1V2}. After one point blowing up $\hat \pi:X\to \mathbb C^3$, $E(P)$ has 9 nodes and they are resolved by 9 toric modifications
omega: $Y\to X$. At each node of $E(P)$, the second toric resolution is exactly as in \S \ref{ResolutionV1V2}. In $V_1$, $E(P)$ is normalized by $\omega$ into a genus 1 curve. For $V_2$, $E(P)$ has two torus (=surface of genus 1) and separated by $\omega$.
Then the corresponding links $K_{i}$  are not diffeomorphic and the invariants of the dual resolution graphs are given as follows.
\begin{table}[hbtp]
\caption{Invariants}
\centering
\begin{tabular}{lccc}
\hline
 link&{$ r$}&{$g_{tot}$}&{$b_1$}\\
 \hline\hline
 {$K_1$}&{$9$}&{$1$}&{$11$}\\
 \hline
 {$ K_2$}&{$8$}&{$2$}&{$12$}\\
 \hline
\end{tabular}
\end{table}
Their zeta function is given by $(1-t^6)^{-12}(1-t^{6+m})^{-9}$ and 
$\mu^*=(134,25,5)$.

Example 2. Consider triple of projective curve
$\{C_{6,3},C_{6,4},C_{6,5}\}$ of sextics with 10 $A_1$ singularities
where $C_{6,3}=C_6^{(10)}$ (10 nodal sextic), $C_{6,4}=C_3^{(1)}+C_3^{(0)}$
and $C_{6,5}=C_4^{(1)}+C_1+C_1'$ with $C_1, C_1'$ being lines. Irreducible components are intersecting transversally.
Let $f_{6,j}(x,y,z)$ be the defining convenient homogeneous polynomials
and let $g_{6,j}(x,y,z)=f_{6,j}(x,y,z)+z^{6+m},\,j=3,4,5$ respectively.
Let $K_j, j=3,4,5$ be the corresponding link 3-manifolds. Among the exceptional divisors, $C_3^{(0)}$ has genus 1 and 
the normalization of $C_4^{(1)}$ has genus 2 and the corresponding invariants of the resolution graphs are given as follows.

\begin{table}[hbtp]
\caption{Invariants}
\centering
\begin{tabular}{lcccc}
\hline
 link&{$ r$}&{$g_{tot}$}&{$b_1$}\\
 \hline\hline
 {$K_3$}&{$10$}&{$0$}&{$10$}\\
 \hline
 {$ K_4$}&{$9$}&{$1$}&{$11$}\\
 \hline
 $K_5$&$8$  & $2$ & $12$\\
 \hline
\end{tabular}
\end{table}
\noindent
Their zeta-function is $(1-t^6)^{-11}(1- t^{6+m})^{-10}$. The $\mu^*$-invariant is 
$(135,25,5)$.

\vspace{.3cm}
{\em A related  problem.} 1. In the proof of Theorem \ref{mu-zariski}, we have proved that the multiplicity of $\mu$-constant family is   constant in our example $\{f_1,f_2\}$ or $\{f_3,f_4\}$. We ask if this  is true for other $\mu$-constant family.
If this is not true, give an explicit counter example.

2. Suppose that $\{g_1,g_2\}$ is a weak Zariski pair (respectively Zariski pair) of degree $d$ and consider $g_i:=f_i+z^{d+m},\,i=1,2$. 
\begin{enumerate}
\item If $\{f_1,f_2\}$ is a weak Zariski pair,  is $\{f_1,f_2\}$ is a $\mu$-Zariski pair of hypersurface?
\item If they are a weak Zariski pair, but not a Zariski pair, are their links $K_{g_i}, i=1,2$ not  diffeomorphic?

\item Suppose $\{f_1,f_2\}$ is a Zariski pair. Is  $\{g_1,g_2\}$ a $\mu$-Zariski pair? (We have shown that they are $\mu^*$-Zariski pair in \cite{EO}.
\end{enumerate}
3. Are the examples in this appendix  $\mu$-Zariski pairs (resp. $\mu$-Zariski triple)?
\def\cprime{$'$} \def\cprime{$'$} \def\cprime{$'$} \def\cprime{$'$}
  \def\cprime{$'$} \def\cprime{$'$} \def\cprime{$'$} \def\cprime{$'$}

\end{document}